\documentclass{amsart}
\usepackage{microtype}
\usepackage{mathtools}
\usepackage[hidelinks]{hyperref}
\usepackage{amsthm, amsfonts, amssymb}
\usepackage{tikz-cd}
\usetikzlibrary{arrows.meta, calc}
\usepackage{macros}
\usepackage{url}
\allowdisplaybreaks

\newtheorem{theorem}{Theorem}
\newtheorem{lemma}[theorem]{Lemma}

\newtheorem{corollary}[theorem]{Corollary}
\newtheorem{observation}[theorem]{Observation}
\newtheorem{proposition}[theorem]{Proposition}

\theoremstyle{definition}
\newtheorem{construction}[theorem]{Construction}
\newtheorem{definition}[theorem]{Definition}
\newtheorem{notation}[theorem]{Notation}
\newtheorem{remark}[theorem]{Remark}
\newtheorem{example}[theorem]{Example}

\begin{document}

\title{The Synthetic Sierpi\'nski Cone}

\author[F.\ Bakke]{Fredrik Bakke}
\address{Department of Mathematical Sciences\\ Norwegian University of Science and Technology\\ Trondheim, Norway
}
\email{fredrik.bakke@ntnu.no}

\author[J.\ Sterling]{Jonathan Sterling}
\address{Computer Laboratory\\ University of Cambridge\\ Cambridge, United Kingdom}
\email{js2878@cl.cam.ac.uk}

\author[M.\,D.\ Williams]{Mark Damuni Williams}
\address{School of Computer Science\\ University of Nottingham\\ Nottingham, United Kingdom}
\email{Mark.Williams2@nottingham.ac.uk}

\author[L.\ Ye]{Lingyuan Ye}
\address{Computer Laboratory\\ University of Cambridge\\ Cambridge, United Kingdom}
\email{ly386@cam.ac.uk}

\begin{abstract}
In domains, categories, and toposes, the \emph{Sierpi\'nski cone} construction glues onto a space a universal closed point lying below all the other points. Although this is a lax colimit, it also enjoys a well-known right-handed universal property: the Sierpi\'nski cone classifies partial maps defined on an open subspace. The situation proves more subtle in synthetic models of space based on extending homotopy type theory with an interval, as in several recent approaches to synthetic higher categories and domains: although \emph{globally} it may well be the case that the Sierpi\'nski cone classifies partial maps, this property cannot hold of all parameterised types without degenerating the theory.  On the other hand, there are reflective subuniverses within which the classifying property nonetheless holds.

We show that the largest subuniverse in which the Sierpi\'nski cone classifies partial maps is the accessible localisation at a family of embeddings parameterised in the interval, and this subuniverse is contained within the \emph{Segal} types; this containment is moreover strict in the sense that when the interval is non-trivial, it is not possible for all Segal types to lie in the subuniverse. We finally extend these results from Sierpi\'nski cones to mapping cylinders, providing a new right-handed universal property for the latter.
 \end{abstract}

\keywords{Synthetic topology, higher category theory, domain theory, homotopy type theory, Sierpi\'nski cone, mapping cylinder}

\maketitle

\section{Introduction}

In category theory and domain theory, there is a remarkable coincidence involving the Sierpi\'nski interval $\II\equiv \{ 0\hookrightarrow 1 \}$: a figure $\II\xrightarrow{\alpha}\mathbb{C}$ represents an arrow $\alpha \colon \alpha(0)\to \alpha(1)$ in $\mathbb{C}$; on the other hand, an arrow $\mathbb{C}\xrightarrow{\chi}\II$ classifies an ``open'' subspace of $\mathbb{C}$ given by the pre-image of $1\bcolon \II$.

The two-handedness of the interval extends to an identification between the \emph{Sierpi\'nski cone} and the \emph{partial map classifier} of any space $X$, which we illuminate below. The Sierpi\'nski cone $X_\bot$ of a space $X$ is the following co-comma square, which we may compute by means of a pushout:
\[
  \begin{tikzcd}
    X\ar[r,equals,nfold]\ar[d] & X \ar[d,open embedding,"\iota_X"]\\
    \One\arrow[r,swap,closed embedding,"\bot_X"]\ar[ur,phantom,"{\Uparrow}\gamma_X"] & X\mathrlap{_\bot}
  \end{tikzcd}
  \qquad
  \begin{tikzcd}
    X\ar[r,closed embedding,"{(0,X)}"]\ar[d] & \II\times X \ar[d,"\gamma_X"description] & X\ar[l,open embedding,swap,"{(1,X)}"]\ar[dl,sloped,swap,open embedding,"\iota_X"] \\
    \One\ar[r,swap,closed embedding,"\bot_X"] & |[elbow se]|X\mathrlap{_\bot}
  \end{tikzcd}
\]

On the other hand, the (open-) partial map classifier $\eta_X\colon X\hookrightarrow \Lift(X)$ is the partial product of $X$ with the open embedding $\{1\}\hookrightarrow \II$ and classifies spans $X\gets U \hookrightarrow Y$
where $U\hookrightarrow \mathbb{Y}$ is an \emph{open} embedding in the sense of being classified by $\{1\}\hookrightarrow\II$ as below:
\[
  \begin{tikzcd}
    |[elbow nw]|U\ar[r]\ar[d,open embedding] & |[elbow nw]|X\ar[d,open embedding,"\eta_X"description]\ar[r] & \{1\}\ar[d,open embedding]\\
    \mathbb{Y}\ar[r,densely dashed,"\exists!"description]
     & \Lift(X) \ar[r] & \II
  \end{tikzcd}
\]

Now, it so happens that we may define a universal comparison map $\sigma_X\colon X_\bot\to \Lift(X)$ using \emph{either} the universal property of the Sierpi\'nski cone or the universal property of the partial map classifier. In the former case, we construct a lax square (left) and in the latter case we observe that $\iota_X\colon X\hookrightarrow X_\bot$ is an open embedding under reasonable assumptions and construct a partial map (right):

\[
  \begin{tikzcd}
    X\ar[r,equals,nfold]\ar[d] & X \ar[d,open embedding,"\eta_X"]\\
    \One\arrow[r,swap,closed embedding,"\bot_X"]\ar[ur,phantom,"{\Uparrow}"] & \Lift(X)
  \end{tikzcd}
  \qquad
  \begin{tikzcd}
    |[elbow nw]|X\ar[r,equals,nfold]\ar[d,open embedding,swap,"\iota_X"] & X\ar[d,open embedding,"\eta_X"]\\
    X_\bot\ar[r,densely dashed,swap,"\exists!"] & \Lift(X)
  \end{tikzcd}
\]

The promised coincidence is that the induced comparison map $X_\bot\to \Lift(X)$ is invertible. This property also extends to models of higher categories such as simplicial sets: when $\Lift(X) \equiv \textstyle\sum_{(i:\Spx{\One})}X^{(i=1)}$ is the open partial map classifier of a simplicial set, it actually happens that $\Lift(X)$ is the directed join of simplicial sets $\Spx{\Zero}\star X \cong X_\bot$.  A similar correspondence is shown by Taylor~\cite{taylor:2000} to hold in \emph{Abstract Stone Duality}, using the fact that functions $f\colon Y\to X_\bot$ can be defined by means of distributive lattice homomorphisms $f^*\colon \II^{X_\bot} \to \II^Y$.

It may therefore be reasonable to say that the correspondence between these two views on partiality is part of the core logic--geometry duality that underpins (higher) category theory, domain theory, topology, \emph{etc.}

\subsection{Subtleties in the synthetic world}

The situation changes considerably when passing to synthetic notions of space---as in synthetic domain theory, synthetic topology, and synthetic (higher) category theory---when expressed in the language of Martin-L\"of type theory or, indeed, homotopy type theory, extended by a generic interval $\II$. The problem is that in dependent type theory, it is unexpectedly strong to assert an axiom like
\begin{quote}
  \em for any type $X$, the comparison map $X_\bot\to \Lift(X)$ is an equivalence,
\end{quote}
because statements in type theory are automatically invariant under change-of-context. Indeed, Pugh and Sterling~\cite{pugh-sterling-2025} noted that under this assumption, we could deduce
\begin{equation}\label{bad-deduction}
  \textstyle\prod_{(i\bcolon\mathbb{I})}
  \mathsf{isEquiv}\bigl\lparen
    (i=1)_\bot
    \xrightarrow{\sigma_{(i=1)}}
    \Lift(i=1)
  \bigr\rparen
  \tag{\textasteriskcentered}
\end{equation}
from which it follows that the boundary inclusion $\{0\}+\{1\}\hookrightarrow \mathbb{I}$ is an equivalence and hence that all synthetic spaces are codiscrete.

When a dependent type theorist working internally to a category $\mathcal{S}$ says \emph{``Let $X$ be a type\ldots''}, what follows is taking place not in $\mathcal{S}$ but in the free extension $\mathcal{S}[X]$ of $\mathcal{S}$ by an object. So an assumed type is, in the dependent type theoretic interpretation, a \emph{generic} type rather than an \emph{arbitrary} type.  In contrast, a practitioner of the Mitchell--B\'enabou language of toposes would explain the same turn of phrase as a conservative use of \emph{prenex polymorphism} over simple types; a schema of this kind can only be instantiated with a global type, and so the bad deduction \eqref{bad-deduction} would not obtain.

Those who do not wish to sacrifice all the advantages of dependent type theory may instead introduce \emph{modalities} that can be used to safely state assumptions that are meant to hold only globally. These modalities do come at a cost, but they have proved indispensable to prior work in the general development of synthetic higher category theory inside homotopy type theory~\cite{11186122}.

Although either suggestion above may be used to faithfully express the partial map classification property of global Sierpi\'nski cones, we
recall Steve Vickers' parable concerning the use of violence in mathematics~\cite{vickers-categories-2008-elephant}:

\begin{quote}
  As a parable, I think of toposes as gorillas (rather tha[n] elephants). At first they look very fierce and hostile, and the locker-room boasting is all tales of how you overpower the creature and take it back to a zoo to live in a cage---if it’s lucky enough not to have been shot first. When it dies you stuff it, mount it in a threatening pose with its teeth bared and display it in a museum to frighten the children. But get to know them in the wild, and gain their trust, then you begin to appreciate their gentleness and can play with them.
\end{quote}

It was in Vickers' immanent spirit that Pugh and Sterling sought an internally viable analysis of the comparison map $\sigma_X\colon X_\bot\to \Lift(X)$ for \emph{generic} $X$ in the dependent type theoretic sense.

\subsection{Sierpi\'nski completeness beyond sets}

Notwithstanding the fact that we cannot expect the comparison map $\sigma_X \colon X_\bot\to \Lift(X)$ to be an equivalence for generic $X$, there are many types $C$ that are nonetheless \emph{(internally) orthogonal} to all $\sigma_X$, in the sense that the restriction map
\[
  C^{\sigma_X} \colon C^{\Lift(X)} \to C^{X_\bot}
\]
is an equivalence. Pugh and Sterling referred to these ``local'' types as \textbf{\emph{Sierpi\'nski complete}}, and sought to show that they formed an accessible reflective subuniverse in the sense of Rijke~\emph{et al.}~\cite{rijke-shulman-spitters:2020} that is moreover closed under the formation of partial map classifiers $\Lift(X)$. In such a subuniverse, one could compute the Sierpi\'nski cone \emph{as} the partial map classifier without resorting to higher inductive types.
Given the overwhelming size of the localising class \[ \mathcal{L}_{\mathit{Sierp}} \equiv \{\sigma_X\colon X_\bot \to \Lift(X)\mid X\ \mathit{type}\}\text{,}\] reflectivity was a tall order, however. One of the results of Pugh and Sterling was, nevertheless, that in \emph{certain} cases it suffices to restrict attention to the ``little'' comparison maps
\[
  \mathcal{L}_{\mathit{Sierp}}^{\mathit{little}} \equiv \{ \sigma_{(i=1)} \colon {(i=1)}_\bot\to \Lift(i=1) \mid i\bcolon\II\}
\]
which are very small indeed.

\begin{quote}
  A type is \textbf{\emph{little-Sierpi\'nski complete}} when it is orthogonal to $\mathcal{L}_{\mathit{Sierp}}^{\mathit{little}}$.
\end{quote}

In particular, Pugh and Sterling proved in the setting of univalent foundations that a \emph{set}\footnote{By \emph{set} we mean a 0-truncated type.} is Sierpi\'nski complete if and only if it is \emph{little} Sierpi\'nski complete, which makes the Sierpi\'nski complete \emph{sets} an accessible reflective subuniverse of the universe of sets. The main question left open in \emph{op.\ cit.}\ was whether this can be extended to untruncated types, rendering the Sierpi\'nski complete \emph{types} an accessible reflective subuniverse; this is indeed important, because the only synthetic $\infty$-categories that are 0-truncated are necessarily posets. We resolve the question in the positive by verifying that \emph{all} little-Sierpi\'nski complete types are Sierpi\'nski complete and therefore that the two conditions are equivalent (Corollary~\ref{cor:sierp=little-sierp}).

\subsection{Segal and based Segal completeness}

Pugh and Sterling's results emerged from an observation concerning the geometry of \emph{simplices} and \emph{inner horns} in the synthetic setting. We first recall the Segal completeness law from Riehl and Shulman~\cite{riehl-shulman:2017},
which specifies the universal property of compositions in synthetic higher categories:

\begin{quote}
  A type is \textbf{\emph{Segal complete}} when it is orthogonal to the inner horn inclusion
  $\mathcal{L}_{\mathit{Segal}} \equiv \{ \Horn \hookrightarrow \Spx{\Two}\}$.
\end{quote}

The observation of Pugh and Sterling~\cite{pugh-sterling-2025} was that the inclusion $\Horn\hookrightarrow\Spx{\Two}$ is, when viewed as an inclusion of families \emph{over} the interval, the sum of all the evident inclusions $(i=1)_\bot \hookrightarrow \II/i$:
\[
  \begin{tikzcd}[cramped]
    \Horn \ar[d,hookrightarrow]\ar[r,"\cong"] & \sum_{(i\bcolon\II)}(i=1)_\bot\ar[d,hookrightarrow]
    \\
    \Spx{\Two}\ar[r,"\cong"'] & \sum_{(i\bcolon\II)} \II/i
  \end{tikzcd}
\]

Based on this observation, Pugh and Sterling proposed a strengthening of the Segal completeness law called \emph{based Segal completeness}:
\begin{quote}
  A type is \textbf{\emph{based Segal complete}} when it is orthogonal to
  $
    \mathcal{L}_{\mathit{Segal}}^{\mathit{based}} \equiv
    \{ (i=1)_\bot \hookrightarrow \II/i\mid i\bcolon\II \}
  $.
\end{quote}

\paragraph{Based Segal \emph{vs.} Segal completeness}

A priori based Segal completeness is evidently stronger than Segal completeness because left classes are closed under sums, but we will show in \S~\ref{sec:separating-based-segal} that under very weak assumptions (which hold in the $\infty$-topos of simplicial spaces), the inclusion is strict:
we show that under reasonable assumptions, each little Sierpi\'nski cone $(i=1)_\bot$ is Segal complete, and if it were also \emph{based} Segal complete, then each inclusion $(i=1)_\bot\hookrightarrow \II/i$ would be an isomorphism, rendering all synthetic spaces codiscrete.

\paragraph{Relation to Sierpi\'nski completeness}

Under appropriate assumptions (such as when $\II$ forms a dominance), the partial order on $\II$ is obtained from the implication order on $\Prop$ by the indicator function $({-}=1)\colon \II\to\Prop$, from which it is easily seen that the slice $\II/i$ is precisely the partial map classifier $\Lift(i=1)$ and the inclusion $(i=1)_\bot \hookrightarrow \II/i$ is the little comparison map $\sigma_{(i=1)}\colon {(i=1)}_\bot\to \Lift(i=1)$. In this case, a type will be ``little Sierpi\'nski'' complete if and only if it is based Segal complete.

\paragraph{Summary}

Combining everything, we therefore obtain reasonable assumptions under which the following implications hold:
\[
  \begin{tikzcd}
    \text{based Segal complete}
      \ar[r,Rightarrow, "\text{strictly}"]
      \ar[d,Leftrightarrow]
    & \text{Segal complete}
    \\
    \text{little-Sierpi\'nski complete}
      \ar[r,Leftrightarrow]
    & \text{Sierpi\'nski complete}
      \ar[u,Rightarrow, "\text{strictly}"']
  \end{tikzcd}
\]

\subsection{Mapping cylinders, type theoretically}

The Sierpi\'nski cone is a special case of a more general co-comma construction that yields the \emph{open mapping cylinder} or \emph{Artin glueing} of a function $f\colon  X\to Y$:
\[
  \begin{tikzcd}
    X\ar[r,equals,nfold]\ar[d,swap,"f"] & X \ar[d,open embedding,"\iota_f"]\\
    Y\arrow[r,swap,closed embedding,"\bot_f"]\ar[ur,phantom,"{\Uparrow}\gamma_f"] & \OpenMCyl_Y(f)
  \end{tikzcd}
\]

We observe in \S~\ref{sec:mapping-cylinders} is that in the synthetic setting, the above is the sum $ \OpenMCyl_Y(f) \cong \textstyle\sum_{y:Y} (\mathsf{fib}_f(y))_\bot$ of the Sierpi\'nski cones of the \emph{fibres} of $f$.
Thus any insight on Sierpi\'nski cones can be straightforwardly transformed into an insight about general mapping cylinders; this allows us, in particular, to characterise the right-handed universal property of mapping cylinders within certain reflective subuniverses in terms of partial map classifiers, adapting the results of Pugh and Sterling concerning the latter. In particular, we show that under suitable assumptions, the mapping cylinder of a function between Sierpi\'nski complete types can be computed as a sum of partial map classifiers (Corollary~\ref{cor:mapping-cylinders-are-logical}).

\subsection{Discussion of related work}

We situate our work within a present-day trend of unifying several classical and emerging areas of research in synthetic mathematics:
\begin{enumerate}
  \item \textbf{Topology}: synthetic domain theory in the tradition of Scott, Hyland, Phoa, Rosolini, and others~\cite{hyland:1991,phoa:1991,fiore-rosolini:1997:cpos,reus-streicher:1999}, and recently taken up by Sterling and Ye~\cite{sterling2025domainsclassifyingtopoi}; synthetic topology in the sense of Le\v{s}nik~\cite{lešnik2021synthetictopologyconstructivemetric}; synthetic computability theory in the sense of Bauer~\cite{bauer:2006}; synthetic Stone duality in the sense of Cherubini \emph{et al.}~\cite{cherubini_et_al:LIPIcs.TYPES.2024.3}; Abstract Stone Duality in the sense of Taylor~\cite{taylor:2002,taylor:2006,taylor:2000}.
  \item \textbf{Geometry}: synthetic differential geometry in the tradition of Kock and Lawvere~\cite{kock:2006} as continued in the homotopical setting by Wellen~\cite{wellen:2017}; synthetic algebraic geometry as introduced by Blechschmidt~\cite{blechschmidt:2017} and recently taken up by the Gothenburg school~\cite{cherubini-coquand-hutzler:2024}.
  \item \textbf{Categories}: synthetic $(\infty,1)$-category theory as initiated by Riehl and Shulman~\cite{riehl-shulman:2017} and continued by Buchholtz, Gratzer, Weinberger, and Bardomiano Mart\'inez~\cite{buchholtz-weinberger:2023,11186122,Bardomiano_Martínez_2025,Bardomiano_Martínez_2025-2}.
\end{enumerate}

We build most directly upon the work of Pugh and Sterling~\cite{pugh-sterling-2025}, with whom we share the goal of assuming axioms only sparingly so that our results may apply \emph{both} to synthetic topology/domain theory \emph{and} to synthetic higher category theory; unlike much of the cited work in synthetic $(\infty,1)$-category theory, our results apply not only in the standard model of simplicial spaces, but also in non-standard models derived from both domain theory and lattice theory. It remains unclear whether our work has any implications for synthetic geometry.

\subsubsection{Synthetic quasi-coherence}

The driving force behind the unification of the three areas described above was Blechschmidt's identification of the duality between affine spaces and quasi-coherent algebras for the generic models of various geometric theories, closely related to the Kock--Lawvere axiom and generalising both Blass's and Phoa's principles~\cite{blass:1986,phoa:1991}. In contrast to the classical work in these areas, present efforts take univalent foundations as a metatheory, which allows for the study of infinite-dimensional spaces and leads to a smoother treatment of even low-dimensional spaces.

\subsection{Motivations and applications}

\subsubsection{Motivations in denotational semantics}

When a denotational semantics of a programming language is given in synthetic domain theory, the partiality needed for forming fixed points is represented using the partial map classifier monad $\Lift(X)$.
In the usual Moggi semantics of partial programming languages, a program $e\colon X\rightharpoonup Y$ is interpreted by a function $e \colon X\to \Lift Y$ and the universal property of the partial map classifier immediately gives a universal reasoning principle for denotational inequality:
\[
  e\leq_{X\rightharpoonup Y} e' \Longleftrightarrow
  \forall x\pdot
  ex{\downarrow}\Rightarrow e'x{\downarrow} \land ex\leq_{Y} e'x
\]

On the other hand, a \emph{lazy} program $f\colon {\sim}X\rightharpoonup Y$ taking a thunk as an argument would correspond to a function
$
  f\colon \Lift{X}\to\Lift Y
$
and in reasoning about such a function, the universal property of the partial map classifier applies only \emph{a priori} on the right. In a denotational semantics based on CPOs, where the partial map classifier \emph{is} the Sierpi\'nski cone, we could check $f\leq_{{\sim}X\rightharpoonup Y} f'$ by cases on whether the input is defined:
\[
  f\leq_{{\sim}X\rightharpoonup Y} f'\Longleftrightarrow
  (f\bot \leq_{\Lift Y} f'\bot)
  \land
  (f\circ\mathsf{return} \leq_{X\rightharpoonup Y} f'\circ\mathsf{return})
\]

The latter reasoning principle holds synthetically if inequality is interpreted by the observational preorder. On the other hand, if inequality is interpreted by \emph{paths} or \emph{directed homotopy}, then the principle is not universally valid in synthetic domain theory. In any case, it is common in denotational semantics to define entire maps $\Lift{X}\to Y$ by cases on whether the input is defined, and this can be done synthetically \emph{only} when $Y$ is Sierpi\'nski complete.

One further observation is that in synthetic domain theory, it is not usually required that paths within a synthetic domain be composable. This property, equivalent to the Segal condition from higher category theory, was called \emph{path transitivity} in the context of synthetic domains by Fiore and Rosolini~\cite{fiore-rosolini:1997:cpos}. Because Sierpi\'nski completeness is strictly stronger, it follows that any notion of synthetic domain in which lazy functions can be defined by cases on their input's definedness must be path transitive.

\subsubsection{Motivations in higher category theory}

Although our original motivations come from denotational semantics, there are also implications for synthetic (higher) category theory. In non-synthetic category theory, the Sierpi\'nski cone construction corresponds to the operation that freely adds an initial object to a category. In the synthetic setting, if you have an $\infty$-category $C$, it is very rare for the Sierpi\'nski cone $C_\bot$ to be an $\infty$-category, but we may use the $\infty$-categorical reflection to obtain an $\infty$-category $R(C_\bot)$ with the correct lax colimiting universal property. There is, however, a priori no explicit description of the reflection $R(C_\bot)$, unlike in the world of ordinary category theory.

Our results have a few implications for the explicit computation of $R(C_\bot)$. First of all, if we restrict to the world of \emph{Sierpi\'nski complete} $\infty$-categories, then $R(C_\bot)$ is equivalent to the partial map classifier $\Lift(C)$ as we would expect from non-synthetic category theory. Secondly, we have shown that it is not possible for every synthetic $\infty$-category to be Sierpi\'nski complete, which means that synthetic $\infty$-categories cannot have a computationally convenient description of free cocompletion by an initial object without further conditions.
\section{Foundational preliminaries}

There are several different metalanguages in use for working synthetically with higher categories and directed spaces. For simplicity's sake, we work in univalent foundations as may be formalised in homotopy type theory~\cite{hottbook}. Rather than globally assuming an interval object satisfying various axioms, we instead state definitions and theorems parameterised in a bounded distributive lattice $\J$ satisfying just the needful assumptions so that they can be instantiated at will without incurring a change of metalanguage.

In contrast to the original work of Riehl and Shulman~\cite{riehl-shulman:2017}, we therefore do not make use of any special types satisfying strong laws of definitional equality, such as extension types. In contrast to the work of Gratzer \emph{et al.}~\cite{11186122}, we do not extend Martin-L\"of type theory with any modalities except briefly in \S~\ref{sec:exponentiability-of-global-points}, nor do we globally assume any simplicially-inspired axioms on the interval (\emph{e.g.}\ that it is a strict linear order, \emph{etc.}). As a result, our work can be readily mechanised in any stock proof assistant that supports intensional type theory---and, indeed, we have mechanised our main result (Theorem~\ref{thm:horrible}) already in the Agda proof assistant.

We recall a few important notions from univalent foundations that we shall need. First of all, we use the terms ``proposition'' and ``set'' in the sense of the HoTT Book~\cite{hottbook}; however, we shall use the phrase ``there exists'' in the truncated sense of traditional mathematics rather than in the propositions-as-types sense of \emph{op.\ cit.} Much of our paper relies on an understanding of localisation in univalent foundations, for which we refer the reader to Rijke \emph{et al.}~\cite{rijke-shulman-spitters:2020,rijke-christensen-scoccola-opie:2020}. We do, however, recall the open and closed modalities below in order to fix notations.

\begin{definition}
  To each proposition $P$ we may associate two lex modalities, as explained by Rijke \emph{et al.}~\cite{rijke-shulman-spitters:2020}:
  \begin{enumerate}
  \item The \emph{open} modality $X \mapsto X^P$.
  \item The \emph{closed} modality $X\mapsto P*X$, where the join $P*X$ is the pushout  of the span $P\gets P\times X\to X$.
  \end{enumerate}
\end{definition}

\begin{definition}\label{def:P-connectedness}
  Let $P$ be a proposition. A type $X$ shall be called \emph{$P$-connected} when either of the following equivalent conditions hold:
  \begin{enumerate}
    \item The function space $X^P$ is contractible.
    \item The join constructor $X\to P*X$ is an equivalence.
  \end{enumerate}
\end{definition}

The equivalence of the two conditions of Definition~\ref{def:P-connectedness} reflects the \emph{complementarity} of the open and closed modalities. For more detail, we refer the reader to Rijke \emph{et al.}~\cite{rijke-shulman-spitters:2020} or even a classical source on topos theory~\cite{johnstone:topos:1977}.

\section{Synthetic topology in a lattice context}

We shall write $\BDL$ for the category of bounded distributive lattices; for a given bounded distributive lattice $\J$, we will write $\ALG{\J} \colondefeq \J/\BDL$ for the category of bounded distributive lattices equipped with a homomorphism from $\J$, which we shall refer to as $\J$-algebras. Of course, we can view $\J$ itself as a generic $\J$-algebra via its identity map. We will write $\mathsf{U}\colon\BDL\to\SET$ and $\mathsf{U}_\J\colon \ALG{\J}\to\SET$ for evident forgetful functors projecting out carrier sets.  We may regard any bounded distributive lattice as a partial order, taking $i\leq j$ to be either $i\land j = i$ or equivalently $i\lor j = j$.

\begin{notation}\label{notation:indicator-functions}
  We will write $\IsT{-},\IsF{-}\colon \mathsf{U}\J\to\Prop$ for the indicator functions of the subsets $\{1\}, \{0\}\subseteq \J$ respectively, so that we have $\IsT{i}\Leftrightarrow (i=1)$ and $\IsF{i}\Leftrightarrow (i=0)$.
\end{notation}

The functions of Notation~\ref{notation:indicator-functions} are monotone and antitone respectively, and so extend to functors of partial orders $\J\to \Prop$ and $\J\Op\to\Prop$; when $\J$ is \emph{local} in a sense that we shall define straightaway, these functions shall in fact become homomorphisms of bounded distributive lattices.

\begin{definition}\label{def:lattice-conditions}
  A bounded distributive lattice $\J$ is called \emph{strict} when $\IsT{-}\colon \J\to\Prop$ preserves the empty join $0$, and \emph{disjunctive} when $\IsT{-}$ preserves binary joins, and \emph{local} when $\IsT{-}$ preserves all finite joins. $\J$ is \emph{conjunctive} when $\IsF{-}$ sends binary meets to joins, or (equivalently) $\J\Op$ is disjunctive. $\J$ is called \emph{conservative} when  $\IsT{-}$ is an embedding or (equivalently) an order-embedding.
\end{definition}

\begin{remark}
  A strict linear order, as in the standard simplicial spaces model of synthetic higher category theory, will necessarily satisfy all the conditions of Definition~\ref{def:lattice-conditions}.
\end{remark}

Recall that the initial bounded distributive is the two-element lattice $\mathbb{B} = \{ 0 < 1 \}$.

\begin{definition}\label{def:quotient-initial}
  A bounded distributive lattice $\J$ is called \emph{quotient-initial} when either of the following equivalent conditions holds:
  \begin{enumerate}
    \item For every $i\bcolon \J$, either $i=0$ or $i=1$.
    \item The universal map $\mathbb{B}\to \J$ is a regular epimorphism.
  \end{enumerate}
\end{definition}

\begin{proof}
  Regular epimorphisms of bounded distributive lattices are precisely the surjective homomorphisms~\cite[Corollary~3.5.3]{borceux:1994:vol2}, so $\mathbb{B}\to\J$ is a regular epimorphism if and only if every element of $\J$ lies in the image of $\mathbb{B}$.
\end{proof}

\begin{observation}\label{obs:strict-plus-quotient-initial}
  If $\J$ is both strict and quotient-initial, then the universal homomorphism $\mathbb{B}\twoheadrightarrow \J$ is an isomorphism and so $\J$ has exactly two elements.
\end{observation}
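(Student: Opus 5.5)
Since $\J$ is quotient-initial, Definition~\ref{def:quotient-initial} tells us that the universal homomorphism $u\colon\mathbb{B}\to\J$ is a regular epimorphism, i.e.\ surjective; equivalently, every $i\bcolon\J$ satisfies $i=0$ or $i=1$. So it remains to show that $u$ is injective. Since $\mathbb{B}$ has only the two elements $0$ and $1$, this reduces to showing $0\neq 1$ in $\J$. A bijective homomorphism of bounded distributive lattices is an isomorphism, because its inverse function automatically preserves the lattice operations. Once $u$ is an isomorphism, $\J$ has exactly two elements.

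To see $0\neq 1$, I would use strictness. By Definition~\ref{def:lattice-conditions}, $\IsT{-}$ preserves the empty join, so $\IsT{0}\Leftrightarrow\bot$, that is, $(0=1)$ is false in $\J$. This is exactly the required distinctness. Injectivity of $u$ then follows: for $a,b\bcolon\mathbb{B}$ with $u(a)=u(b)$, equality in $\mathbb{B}$ is decidable, and if $a\neq b$ we would get $0=1$ in $\J$, a contradiction.

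There is one constructive subtlety. Surjectivity in the sense of regular epimorphism gives only the truncated statement ``$i=0$ or $i=1$''. For a bijection this is harmless: $\J$ is a set, and once $u$ is an embedding its fibres are propositions, so the truncated existence yields a contractible fibre. Thus $u$ is an equivalence of sets, and hence a lattice isomorphism.

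The main obstacle, if any, is only this bookkeeping between the truncated ``or'' and building an actual inverse. The argument above settles it, since $u$ being an embedding makes the truncation irrelevant.
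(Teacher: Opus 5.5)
Your proof is correct and follows the paper's argument: surjectivity comes from quotient-initiality, and injectivity comes from strictness ($0\neq 1$ in $\J$) together with decidable equality on the finite set $\mathsf{U}\mathbb{B}$. You then conclude because bijective homomorphisms of bounded distributive lattices are isomorphisms, which the paper phrases as conservativity of the monadic forgetful functor $\mathsf{U}$; your remark that a surjective embedding of sets is an equivalence just makes explicit the truncation bookkeeping the paper leaves implicit.
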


\begin{proof}
  The forgetful functor $\mathsf{U}\colon \BDL\to\SET$ is monadic and therefore conservative, and so isomorphisms of bounded distributive lattices are precisely the bijective homomorphisms. By strictness of $\J$, the universal map $\mathbb{B}\to \J$ necessarily preserves non-equality, which constructively implies injectivity in this case because $\mathsf{U}\mathbb{B}$ is finite.
\end{proof}

\subsection{Duality between algebras and spaces}

\begin{definition}
  The \emph{spectrum} of a $\J$-algebra $A$ is defined to be the set of $\J$-algebra homomorphisms from $A$ to $\J$:
  \[
    \Spec{\J} A \colondefeq
    \hom_{\ALG{\J}}(A,\J)
  \]

  The spectrum construction evidently gives a functor \[
    \operatorname{Spec}_\J\colon \ALG{\J}^{\mathsf{op}}\to\SET
  \]
  whose action on maps is given by precomposition.
\end{definition}

\begin{definition}
  The \emph{observational $\J$-algebra} of a type $X$ is defined to be the following $X$-fold product of $\J$ with itself,
  \[
    \Opens{\J}X \colondefeq \textstyle\prod_{(x\bcolon X)}\J \equiv \J^X\text{,}
  \]
  computed in $\ALG{\J}$. This construction restricts to a functor
  \[
    \Opens{\J}\colon \SET\to \ALG{\J}\Op
    \text{.}
  \]

  An element of $\Opens{\J}X$ is called an \emph{observation}.
\end{definition}

\begin{proposition}
  The spectrum construction is right adjoint to the observational algebra construction for any $\J\in\BDL$:
  \[
  \begin{tikzcd}
    \ALG{\J}\Op
    \ar[rr, "\Spec{\J}"{name=0}, swap, curve={height=18pt}]
    &&
    \SET
    \ar[ll, "\Opens{\J}"{name=1}, swap, curve={height=18pt}]
    \arrow["\dashv"{anchor=center, rotate=-90, font=\normalsize}, draw=none, from=1, to=0]
    \end{tikzcd}
  \]

  The unit and counit are computed as follows:
  \begin{gather*}
  \begin{aligned}
    &\eta_X \colon X\to \Spec{\J}\Opens{\J}X\\
    &\eta_X(x) \colondefeq \lambda \chi\bcolon \Opens{X}\pdot\chi(x)
  \end{aligned}
  \qquad
  \begin{aligned}
    &\epsilon_A \colon A\to \Opens{\J}\Spec{\J}A\\
    &\epsilon_A(a) \colondefeq \lambda p\bcolon \Spec{\J}A\pdot p(a)
  \end{aligned}
  \end{gather*}
\end{proposition}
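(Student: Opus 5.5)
The plan is to exhibit a natural bijection
\[
  \hom_{\ALG{\J}}(A,\Opens{\J}X)\;\cong\;\hom_{\SET}(X,\Spec{\J}A),
\]
natural in $X\in\SET$ and $A\in\ALG{\J}$. Note that a morphism $\Opens{\J}X\to A$ in $\ALG{\J}\Op$ is a $\J$-algebra map $A\to\Opens{\J}X$. The bijection will come from the universal property of the product $\Opens{\J}X\equiv\prod_{(x\bcolon X)}\J$, which is computed in $\ALG{\J}$.

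The argument has three steps.

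\begin{enumerate}
  \item \emph{Products in $\ALG{\J}$.} I will first check that this product really is computed pointwise. The operations and the structure map $\J\to\J^X$ are the diagonal $j\mapsto\lambda x\pdot j$, and $\ALG{\J}=\J/\BDL$ is a coslice. Products in a coslice are formed from products in $\BDL$, and $\mathsf{U}\colon\BDL\to\SET$ is monadic, so it creates limits. Hence the projections $\pi_x\colon\J^X\to\J$ form a product cone of $\J$-algebras.
  \item \emph{The bijection.} By the universal property of the product, a $\J$-algebra map $f\colon A\to\J^X$ is the same as an $X$-indexed family of $\J$-algebra maps $\pi_x\circ f\colon A\to\J$. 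Such a family is exactly a function $\hat f\colon X\to\Spec{\J}A$ given by $\hat f(x)=\lambda a\pdot f(a)(x)$. The inverse sends $g\colon X\to\Spec{\J}A$ to $\lambda a\pdot\lambda x\pdot g(x)(a)$, which is a $\J$-algebra homomorphism because the operations are pointwise and each $g(x)$ is a homomorphism. These two assignments are mutually inverse by function extensionality, since both are just argument swaps.
  \item \emph{Naturality and the unit/counit.} Naturality in $X$ and in $A$ is immediate, since both actions are by precomposition. Specialising the bijection to identities then recovers the stated maps. Taking $A=\Opens{\J}X$ and $f=\mathrm{id}$ yields $\eta_X(x)=\lambda\chi\pdot\chi(x)$. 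Taking $X=\Spec{\J}A$ and $g=\mathrm{id}$ yields $\epsilon_A(a)=\lambda p\pdot p(a)$, which is a $\J$-algebra map $A\to\Opens{\J}\Spec{\J}A$, that is, a map $\Opens{\J}\Spec{\J}A\to A$ in $\ALG{\J}\Op$, as required.
\end{enumerate}

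None of these steps is substantive. The one place requiring care is the variance bookkeeping in step 3: because $\Opens{\J}$ lands in the opposite category, one must track that both $\eta$ and $\epsilon$ correspond to identities under the hom-set bijection.

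A secondary caveat concerns sizes. $\Spec{\J}A$ is a set because $\J$ is a set, and $\Opens{\J}X$ is a set because $X$ is. If $X$ is allowed to be an arbitrary type rather than a set, the same argument still goes through, since the carrier $\J^X$ remains a set.
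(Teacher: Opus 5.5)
Your proposal is correct. The paper states this proposition without proof, and your argument is the standard one its stated unit and counit presuppose: you identify $\hom_{\ALG{\J}}(A,\J^X)$ with $X$-indexed families of $\J$-algebra maps $A\to\J$ via the pointwise product (created by the coslice and monadic forgetful functors), then read off $\eta_X$ and $\epsilon_A$ as transposes of identities with the variance of $\ALG{\J}\Op$ tracked correctly.
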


Recent work in synthetic (topology, geometry, category theory) has emphasised various \emph{synthetic quasi-coherence} axioms that are closely related to the Kock--Lawvere axiom from synthetic differential geometry and the Phoa principle of synthetic domain theory and synthetic topology; in each case, certain classes of algebras are asserted to lie within the fixed points of the adjunction $\Opens{\J}\dashv \Spec{\J}$. We follow Sterling and Ye~\cite{sterling2025domainsclassifyingtopoi} by instead studying a general notion of quasi-coherence in terms of the adjunction above.\footnote{This is similar to the approach of \emph{abstract Stone duality}~\cite{taylor:2002}, although the adjunction that we are considering is not monadic.}

\begin{definition}
  We shall refer to the fixed points of the adjunction $\Opens{\J}\dashv \Spec{\J}$ as \emph{quasi-coherent $\J$-algebras} and \emph{affine $\J$-spaces} respectively, making the following restricted adjunction an adjoint equivalence by definition:
  \[
  \begin{tikzcd}
    \mathbf{QCoh}_\J\Op
    \ar[rr, "\Spec{\J}"{name=0}, swap, curve={height=18pt}]
    &&
    \mathbf{Aff}_\J
    \ar[ll, "\Opens{\J}"{name=1}, swap, curve={height=18pt}]
    \arrow["\dashv"{anchor=center, rotate=-90, font = \normalsize}, draw=none, from=1, to=0]
    \end{tikzcd}
  \]
\end{definition}

\begin{definition}
  A $\J$-algebra $A$ is called \emph{stably quasi-coherent} when the quotient $A/(a_i=b_i)_{i\leq n}$ of $A$ by any finite set of identifications is quasi-coherent.
\end{definition}

The following characterisation of Phoa's principle~\cite{phoa:1991} is explained by Sterling and Ye~\cite{sterling2025domainsclassifyingtopoi}.

\begin{definition}[{Sterling and Ye~\cite{sterling2025domainsclassifyingtopoi}}]
  $\J$ is said to satisfy \emph{Phoa's principle} when any of the following equivalent conditions holds:
  \begin{enumerate}
  \item The polynomial $\J$-algebra $\J[\mathsf{x}]$ is quasi-coherent.
  \item Every finitely generated free $\J$-algebra is quasi-coherent.
  \item For any $\alpha\colon \J^\J$ we have $\alpha(i) = \alpha(0) \lor (i \land \alpha(1))$.
  \end{enumerate}
\end{definition}

\subsection{Topology and partial map classification}
We shall write $\II$ to denote the ``interval'', which we define to be the underlying set of $\J$ itself.

\subsubsection{Open and closed subspaces}

The ``observational topology'' $\Opens{\J}X \equiv \J^X$ of a space $X$ has $\II^X$ as its underlying set/space. The role of the interval in representing the intrinsic topology of a given space leads to a simple description of \emph{open} and \emph{closed} subspaces.

\begin{definition}[Open/closed embeddings]
  An embedding $U\hookrightarrow X$ is called an \emph{open embedding} when it arises as the pullback of the inclusion $\{1\}\hookrightarrow \II$ along some observation $\varphi_U\in \Opens{\J}{X}$. Dually, a \emph{closed embedding} $K\hookrightarrow X$ is one that arises as a pullback of the inclusion $\{0\}\hookrightarrow \II$.%
  \[
    \begin{tikzcd}
      |[elbow nw]|U \ar[r] \ar[d,open embedding] & \{1\}\ar[d,open embedding]\\
      X \ar[r,densely dashed, "\exists"'] & \II
    \end{tikzcd}
    \quad
    \begin{tikzcd}
      |[elbow nw]|K \ar[r] \ar[d,closed embedding] & \{0\}\ar[d,closed embedding]\\
      X \ar[r,densely dashed, "\exists"'] & \II
    \end{tikzcd}
  \]
\end{definition}

\begin{definition}[Cloven embeddings]
  We have not assumed that open embeddings are \emph{classified} by $\{1\}\hookrightarrow\II$ in the sense that they arise from unique observations. We will speak therefore of \emph{cloven}
  open/closed embeddings $X\hookrightarrow Y$ to refer to open/closed embeddings equipped with a chosen observation that induces them.
\end{definition}

\begin{remark}
  When $\J$ is conservative, all open embeddings are canonically cloven, and when $\J\Op$ is conservative the same holds for closed embeddings.  Hence when $\J$ is stably quasi-coherent, all open and closed embeddings are cloven uniquely.
\end{remark}

\subsubsection{Classification of partial maps}

Next we develop the theory of partial maps relative to $\J$.

\begin{definition}
  An \emph{open-partial map} from $X$ to $Y$ is given by a map into $Y$ from an \emph{open subspace} of $X$, \emph{i.e.}\ a span $Y\gets U\hookrightarrow X$ in which $U\hookrightarrow X$ is an open embedding.
\end{definition}

\begin{definition}
  A \emph{cloven open-partial map} from $X$ to $Y$ is given by an observation $\varphi\in \Opens{\J}{X}$ together with a function $\{X\mid \varphi\}\to Y$ from the induced open subspace as depicted below:
  \[
    \begin{tikzcd}
      Y &
      |[elbow nw]|\{X\mid \varphi\}\ar[r] \ar[l] \ar[d,open embedding]&
      \{1\} \ar[d,open embedding]
      \\
      &
      X\ar[r,"\varphi"'] &
      \II
    \end{tikzcd}
  \]
\end{definition}

Cloven open-partial maps are classified by means of a \emph{polynomial} or \emph{partial product}\footnote{See Dyckhoff and Tholen~\cite{dyckhoff-tholen:1987} and Johnstone~\cite{johnstone:1992,johnstone:1993} for more on partial products.} construction using the generic open embedding $\{1\}\hookrightarrow \II$. In particular, we define
\[
  \Lift(Y) \colondefeq \textstyle\sum_{(i\bcolon\II)}Y^{\IsT{i}}
\]
and observe that the cloven open embedding $\eta_Y\colon Y\hookrightarrow \Lift(Y)$
mapping $y$ to $(1, \mathsf{const}(y))$
classifies cloven open-partial maps into $Y$ as the dashed map $X\to \Lift(Y)$ is induced \emph{universally} in the situation below:
\[
  \begin{tikzcd}
    |[elbow nw]|\{X \mid \varphi\}\ar[r]\ar[d,open embedding] & |[elbow nw]|Y \ar[r] \ar[d,open embedding, "\eta_Y"description]& \{1\}\ar[d,open embedding] \\
    X\ar[r,densely dashed, "\exists!"] \ar[rr,curve={height=18pt},"\varphi"'] & \Lift(Y)\ar[r,"\pi"] & \II
  \end{tikzcd}
\]

When $\J$ is conservative, cleavings of open-partial maps exist canonically and so the open embedding $\eta_Y\colon Y\hookrightarrow \Lift(Y)$ classifies open-partial maps into $Y$. Although conservativity is a very weak assumption, we prefer to work in as much generality as possible when building up the foundations of synthetic topology; the reader who prefers to ignore these subtleties may simply assume that $\J$ is conservative and ignore all discussion of cleaving.

\subsubsection{The dominance property}

Several results depend on the generic open embedding $\{1\}\hookrightarrow \II$ forming a \emph{dominance} in the sense of Rosolini~\cite{rosolini:1986}, which is to say that $\J$ is conservative and open embeddings are closed under composition.

\begin{definition}
  We will say that $\J$ is \emph{dominant} when the open embedding $\{1\}\hookrightarrow\II$ forms a dominance.
\end{definition}

\begin{observation}
 The dual lattice $\J\Op$ is dominant if and only if the closed embedding $\{0\}\hookrightarrow\II$ for $\J$ forms a dominance.
\end{observation}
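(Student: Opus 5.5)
The plan is to unfold the definitions on both sides and check that they agree word for word once the dual lattice is substituted in. Nothing beyond bookkeeping should be needed, because passing from $\J$ to $\J\Op$ changes neither the underlying set $\II$ nor any of the subsets involved. It only swaps which element is called top and which is called bottom.

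First, I will record the basic identifications. The underlying set of $\J\Op$ is $\mathsf{U}\J = \II$, and the top element of $\J\Op$ is the bottom element $0$ of $\J$. Hence the indicator function $\IsT{-}$ computed for $\J\Op$ is exactly $\IsF{-}$ computed for $\J$. Likewise, the generic open embedding $\{1_{\J\Op}\}\hookrightarrow\II$ of $\J\Op$ is literally the subset inclusion $\{0\}\hookrightarrow\II$ of $\J$.

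Next, since open embeddings are defined as pullbacks of the generic open embedding along maps $X\to\II$, the open embeddings relative to $\J\Op$ are precisely the closed embeddings relative to $\J$. The same holds for cloven ones, since the cleaving observations are in both cases maps $X\to\II$.

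Now I will unfold the statement itself. By definition, $\J\Op$ is dominant when $\{1_{\J\Op}\}\hookrightarrow\II$ forms a dominance in Rosolini's sense. As the paper spells it out, this means two things: $\J\Op$ is conservative, and the open embeddings of $\J\Op$ are closed under composition (together with containing the identities, which is automatic since $\IsT{1}$ holds). By the identifications above, conservativity of $\J\Op$ says that $\IsF{-}\colon\II\to\Prop$ is an embedding. Closure under composition says that closed embeddings for $\J$ compose. These are exactly the conditions for $\{0\}\hookrightarrow\II$ to form a dominance, because a subset inclusion into $\II$ being a dominance depends only on the inclusion map and the class of its pullbacks, not on any lattice structure.

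I expect no real obstacle. The only point that needs care is to state the notion ``forms a dominance'' for an arbitrary subset inclusion $D\hookrightarrow\II$ independently of the lattice. This means requiring that the characteristic map $\II\to\Prop$ be an embedding and that the pullbacks compose. Once that is done, both sides of the equivalence become instances of this single notion applied to the same map $\{0\}\hookrightarrow\II$.
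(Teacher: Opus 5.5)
Your proposal is correct and matches the paper, which states this observation without proof because it follows directly from the definitions. The lattice $\J\Op$ has carrier $\II$ and top element $0$, so its generic open embedding is literally $\{0\}\hookrightarrow\II$ (and its $\IsT{-}$ is $\IsF{-}$ for $\J$), which means both sides of the equivalence are the same condition, exactly as you unfold it.
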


\begin{proposition}[{Sterling and Ye~\cite{sterling2025domainsclassifyingtopoi}}]
  If $\J$ itself is stably quasi-coherent, then both $\J$ and $\J\Op$ are conservative and dominant.
\end{proposition}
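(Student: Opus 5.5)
The plan is to extract everything from the quasi-coherence of the two kinds of one-relation quotients $\J/(i=1)$ and $\J/(i=0)$, for $i\bcolon\II$. Both are allowed by stable quasi-coherence of $\J$ viewed as a $\J$-algebra. The first step is to compute their spectra. A $\J$-algebra homomorphism $\J/(i=1)\to\J$ is a $\J$-algebra map $\J\to\J$, necessarily the identity, that sends $i$ to $1$. Hence $\Spec{\J}(\J/(i=1))$ is the proposition $\IsT{i}$, and likewise $\Spec{\J}(\J/(i=0))$ is $\IsF{i}$.

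Quasi-coherence says the counit is an isomorphism. Concretely, the map
\[
  \J/(i=1)\to \J^{\IsT{i}},\qquad [a]\mapsto \lambda\_\pdot a,
\]
is a bijection, and similarly $\J/(i=0)\cong\J^{\IsF{i}}$. I will also use the standard description of congruences on a distributive lattice: $a\equiv b$ modulo $(i=1)$ iff $a\land i=b\land i$, and modulo $(i=0)$ iff $a\lor i=b\lor i$. This is routine, but it should be stated as a preliminary step.

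\emph{Conservativity of $\J$.} Injectivity of the counit gives
\[
  a\land i=b\land i \iff \bigl(\IsT{i}\Rightarrow a=b\bigr).
\]
Suppose $\IsT{i}\Rightarrow\IsT{j}$. Taking $a=j$ and $b=1$, the right-hand side holds, so $j\land i=i$, i.e.\ $i\le j$. Hence $\IsT{-}$ reflects the order, so it is an order-embedding and in particular an embedding. The dual argument with $\J/(i=0)$ and joins shows that $\IsF{i}\Rightarrow\IsF{j}$ forces $j\le i$. This is conservativity of $\J\Op$.

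\emph{Dominance.} Given $i\bcolon\II$ and a family $j\colon\IsT{i}\to\II$, I need some $k$ with
\[
  \IsT{k}\Leftrightarrow\textstyle\sum_{(p\bcolon\IsT{i})}\IsT{j(p)}.
\]
Surjectivity of the counit combined with the quotient map $\J\twoheadrightarrow\J/(i=1)$ yields $a\bcolon\J$ with $\lambda\_\pdot a=j$. Set $k\colondefeq a\land i$. If $\IsT{k}$, then $i=1$ and $a=1$, so $j(p)=a=1$. Conversely, if $p\bcolon\IsT{i}$ and $j(p)=1$, then $a=j(p)=1$, so $k=1$. Since open embeddings are pullbacks of $\{1\}\hookrightarrow\II$, this shows they compose, and together with conservativity $\J$ is dominant. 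The dual construction uses $\J/(i=0)\cong\J^{\IsF{i}}$ and $k\colondefeq a\lor i$ for a family $j\colon\IsF{i}\to\II$. Then $\IsF{k}\Leftrightarrow\sum_{(p\bcolon\IsF{i})}\IsF{j(p)}$, which gives dominance of the closed embedding $\{0\}\hookrightarrow\II$, i.e.\ of $\J\Op$ by the preceding Observation.

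The main obstacle is the bookkeeping in identifying the quasi-coherence isomorphism concretely. One must check that the counit $\epsilon$ for $\J/(i=1)$, after the identification $\Spec{\J}(\J/(i=1))\simeq\IsT{i}$, really is the constant-function map above. One must also make sure the quotient is taken in $\ALG{\J}$, so that homomorphisms fix $\J$ and the spectrum collapses to a proposition. Once that is settled, both parts follow by the short calculations above.
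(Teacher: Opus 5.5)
Your proof is correct. The paper does not prove this proposition itself; it imports it from Sterling and Ye, so there is no in-paper argument to compare against. Your route is the natural direct one: identify $\Spec{\J}(\J/(i=1))\simeq\IsT{i}$, so that quasi-coherence reads $\J/(i=1)\cong{\downarrow}i\cong\J^{\IsT{i}}$, and dually $\J/(i=0)\cong\J^{\IsF{i}}$. From there you read off order-reflection and closure under $\Sigma$.

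One piece of bookkeeping in the dominance step is worth stating explicitly. A representative $a$ of $\epsilon^{-1}(j)$ exists only merely. However, $k=a\land i$ depends only on the class $[a]$; equivalently, the required $k$ is unique by the conservativity you have already established. So the pointwise witnesses assemble into a classifying map $X\to\II$ for the composite open embedding without any appeal to choice. The same remark applies to $a\lor i$ in the closed case.
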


From the dominance property of $\J$, it is a result of Rosoloni~\cite{rosolini:1986} that the open-partial map classifier construction gives a monad.

\subsection{Cubes and simplices}\label{sec:cubes-and-simplices}

Each bounded distributive lattice $\J$ generates a ``geometry'' of cubes, simplices, horns, \emph{etc.}\ that are all obtained by glueing together various $\J$-spectrums.
It is instructive to view all the cubes $\II^n$ as the spaces dual to the \emph{free finitely generated} $\J$-algebras:
\[
  \II^n \cong \Spec{\J} \J[\mathsf{x}_1,\ldots,\mathsf{x}_n]\text{.}
\]

The simplices $\Spx{n} \equiv \{ \alpha \colon n \to \II \mid i_1 \geq \cdots \geq i_n \}$ are the spaces dual to the \emph{finitely presented} $\J$-algebras that freely add to $\J$ a finite descending chain:
\[
  \Spx{n} \cong \Spec{\J} \J[\mathsf{x}_1, \ldots,\mathsf{x}_n] / \mathsf{x}_1\geq \cdots \geq \mathsf{x}_n\text{.}
\]

We can also consider ``slices'' of $\J$, viewed as partial orders. In particular, the slice $\J/i$ has the following spectrum as its underlying space of elements:
\[
  \II/i \equiv \{j\bcolon\II\mid i \geq j\} \cong \Spec{\J} \J[\mathsf{x}]/ i\geq \mathsf{x}\text{.}
\]

Moreover, it is not difficult to see that the triangle $\Spx{\Two}$ is the \emph{sum} of all the slices $\II/i$, as we have
\begin{align*}
  \Spx{\Two} &\equiv \{(i,j)\mid i\geq j\} \cong \textstyle\sum_{(i\bcolon\II)} \II/i\text{.}
\end{align*}

\begin{observation}[{Pugh and Sterling}]\label{obs:slice-of-interval-as-pmc}
  When $\J$ is conservative, the space $\II/i$ is the open-partial map classifier of the proposition $\IsT{i}$, \emph{i.e.}\ we have $\II/i \cong \Lift\IsT{i}$. Hence under this assumption, the triangle is the sum of ``little'' open-partial map classifiers, $\Spx{\Two} \cong \sum_{(i\bcolon\II)} \Lift\IsT{i}$.
\end{observation}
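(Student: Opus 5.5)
We prove the statement by constructing an explicit map $\II/i \to \Lift\IsT{i}$ and showing it is an equivalence; the second claim then follows by summing over $i$. Recall $\Lift\IsT{i} \equiv \sum_{(j\bcolon\II)} \IsT{i}^{\IsT{j}}$. Since $\IsT{i}$ is a proposition, so is the function type $\IsT{i}^{\IsT{j}}$, and it is logically equivalent to the implication $\IsT{j}\Rightarrow\IsT{i}$. Hence $\Lift\IsT{i}$ is the subtype $\{j\bcolon\II\mid \IsT{j}\Rightarrow\IsT{i}\}$ of $\II$. On the other side, $\II/i \equiv \{j\bcolon\II\mid j\leq i\}$ is also a subtype of $\II$, because $\II$ is a set and so $j\land i = j$ is a proposition. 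Two subtypes of $\II$ are equivalent over $\II$ as soon as their defining predicates are logically equivalent. So it suffices to show, for all $j$,
\[
  j\leq i \;\Longleftrightarrow\; (\IsT{j}\Rightarrow \IsT{i}).
\]

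The forward direction holds for any bounded distributive lattice, since $\IsT{-}$ is monotone. Indeed, if $j\leq i$ and $j=1$, then $1 = j \leq i$, so $i=1$.

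The backward direction is the only place where conservativity is used. By Definition~\ref{def:lattice-conditions}, conservativity says $\IsT{-}\colon\J\to\Prop$ is an order-embedding, i.e.\ it reflects the order: $\IsT{j}\leq\IsT{i}$ in $\Prop$ implies $j\leq i$. The order on $\Prop$ is implication, so this is exactly the required implication. The main subtlety is that the definition also phrases conservativity as ``$\IsT{-}$ is an embedding'', and the statement claims the two are equivalent. The argument needs the order-embedding form, so we take that as given. If only injectivity were available, we would recover order reflection by considering $j\lor i$. If $\IsT{j}\Rightarrow\IsT{i}$, then $\IsT{j\lor i}\Leftrightarrow\IsT{i}$: the direction $\IsT{i}\Rightarrow\IsT{j\lor i}$ is monotonicity. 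For the converse we would want $\IsT{j\lor i}\Rightarrow \IsT{j}\lor\IsT{i}$, which is disjunctivity and is not assumed. So we rely on the order-embedding formulation as stated.

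This gives $\II/i\cong\Lift\IsT{i}$ for each $i$, as subtypes over $\II$. The equivalence is fibrewise over $i$, so taking $\sum_{(i\bcolon\II)}$ and composing with the identification $\Spx{\Two}\cong\sum_{(i\bcolon\II)}\II/i$ established just above the statement yields $\Spx{\Two}\cong\sum_{(i\bcolon\II)}\Lift\IsT{i}$.
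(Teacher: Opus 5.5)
Your proof is correct and follows the reasoning the paper sketches in its introduction: conservativity makes the order on $\II$ the restriction of implication along $\IsT{-}$, so $\II/i$ and $\Lift\IsT{i}=\sum_{(j\bcolon\II)}\IsT{i}^{\IsT{j}}$ are the same subtype of $\II$, and summing over $i$ gives $\Spx{\Two}$. Your hesitation about the plain-embedding formulation is unnecessary: $\IsT{-}$ preserves binary meets in any bounded lattice, so $\IsT{j}\Rightarrow\IsT{i}$ gives $\IsT{j\land i}=\IsT{j}$, and injectivity then yields $j\land i=j$ with no disjunctivity required.
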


\subsection{The Sierpi\'nski cone}

  The \emph{Sierpi\'nski cone} $X_\bot$ of a type $X$ is computed by Pugh and Sterling~\cite{pugh-sterling-2025} in the style of dependent type theory to be the sum
  \[
    X_\bot \colondefeq
    \textstyle\sum_{(i\bcolon \II)}
    \IsF{i}*X
  \]
  recalling that $P*X$ is the \emph{closed modality} associated to $P$.

As the summing functor $\sum_{\II}\colon \mathbf{Type}/\II\to \mathbf{Type}$ is a left adjoint, it preserves colimits and so the following is a pushout:
\[
  \begin{tikzcd}[column sep=large]
    \sum_{(i\bcolon\II)}
    \IsF{i}\times X
      \ar[r,hookrightarrow,"\sum_{(i\bcolon\II)}\pi_2"]
      \ar[d,"\sum_{(i\bcolon\II)}\pi_1"']
    &
    \sum_{(i\bcolon\II)}X\ar[d]
    \\
    \sum_{(i\bcolon\II)}\IsF{i}\ar[r,hookrightarrow]
    &
    |[elbow se]| X_\bot
  \end{tikzcd}
\]

Eliminating singletons, we therefore obtain the familiar categorical description of the Sierpi\'nski cone:
\[
  \begin{tikzcd}
    X\ar[r,closed embedding,"{(0,X)}"]\ar[d] & \II\times X \ar[d,"\gamma_X"description] & X\ar[l,open embedding,swap,"{(1,X)}"]\ar[dl,sloped,swap,hookrightarrow,"\iota_X"] \\
    \One\ar[r,swap,closed embedding,"\bot_X"] & |[elbow se]|X\mathrlap{_\bot}
  \end{tikzcd}
\]

\begin{observation}\label{lem:X-01-connected-incl-open}
  When $X$ is $\IsT{0}$-connected in the sense of Definition~\ref{def:P-connectedness}, then the following is a pullback square:
  \[
    \begin{tikzcd}[ampersand replacement=\&]
      X
        \ar[r]
        \ar[d, hookrightarrow, "\iota_X"']
      \&
      \{1\}
        \ar[d,open embedding]
      \\
      X_\bot
        \ar[r,"\pi"']
      \&
      \II\text{,}
    \end{tikzcd}
  \]
  where $\pi\colon X_\bot \to \II$ is the map obtained universally from the following square:
  \[
    \begin{tikzcd}[ampersand replacement=\&]
      X\ar[r,closed embedding,"{(0,X)}"]\ar[d] \& \II\times X\ar[d,"\pi_1"]
      \\
      \One\ar[r,closed embedding,"0"'] \& \II
    \end{tikzcd}
  \]
  Therefore, $\iota_X\colon X\hookrightarrow X_\bot$ is open for $\IsT{0}$-connected $X$.
\end{observation}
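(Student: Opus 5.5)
The plan is to compute fibres of $\pi$ over each $i\bcolon\II$, using the type-theoretic presentation $X_\bot \equiv \sum_{(i\bcolon\II)}\IsF{i}*X$. Under this presentation $\pi$ is simply the first projection. First I will check that this projection agrees with the map induced universally from the displayed square: on the $\II\times X$ summand it is $\pi_1$, and on the point $\bot_X$ it is $0$. This holds because the pushout description was obtained by applying $\sum_{\II}$ to the fibrewise pushouts $\IsF{i}\gets \IsF{i}\times X\to X$. The pullback of $\{1\}\hookrightarrow\II$ along $\pi$ is then the fibre $\IsF{1}*X$, and $\iota_X$ corresponds to the join constructor $X\to \IsF{1}*X$ (transported to $i=1$). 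So the claim reduces to showing that the inclusion $\mathsf{inr}\colon X\to \IsF{1}*X$ is an equivalence.

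Next I will note that $\IsF{1}$ is the proposition $1=0$, while $\IsT{0}$ is the proposition $0=1$. These are equivalent propositions by symmetry of identity. By Definition~\ref{def:P-connectedness}(2), $\IsT{0}$-connectedness of $X$ says exactly that the join constructor $X\to \IsT{0}*X$ is an equivalence. Transporting along the equivalence of propositions $\IsT{0}\simeq\IsF{1}$ gives that $X\to\IsF{1}*X$ is an equivalence. Hence the fibre of $\pi$ over $1$ is identified with $X$ via $\iota_X$, which is precisely the statement that the square is a pullback. Openness of $\iota_X$ then follows directly from the definition of open embedding, with classifying observation $\pi$.

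The main obstacle I expect is the bookkeeping in the first step. One must confirm that the map $\pi$, defined from the categorical pushout description, coincides with the first projection under the identification with the $\sum$-presentation. One must also confirm that $\iota_X$ is the composite $X\to \IsF{1}*X\to X_\bot$ rather than something merely homotopic to it. I would handle both by appealing to the uniqueness part of the pushout universal property: both candidate maps agree on the two legs and on the gluing homotopy, and the gluing homotopy is trivial because $\pi_1(0,x)=0$ definitionally. The fibre computation itself is then the standard fact that the fibre of a projection $\sum_{(i\bcolon\II)}B(i)\to\II$ over $1$ is $B(1)$.
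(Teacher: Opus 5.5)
Your proposal is correct and takes essentially the same route as the paper. Both identify $\pi$ with the first projection of $\sum_{(i\bcolon\II)}\IsF{i}*X$, reduce the pullback claim to the join constructor $X\to\IsF{1}*X$ being an equivalence, and conclude from $\IsT{0}$-connectedness via $\IsF{1}\simeq\IsT{0}$; your extra bookkeeping matching $\pi$ and $\iota_X$ with the $\Sigma$-presentation is left implicit in the paper, but it is sound.
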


\begin{proof}
  Recalling the type theoretic description
  \[
    X_\bot \equiv \textstyle\sum_{(i\bcolon\II)}\IsF{i}*X
  \]
  of the Sierpi\'nski cone, the described map $\pi\colon X_\bot \to \II$ is precisely the first projection function, whose fibre over generic $i\bcolon\II$ is $\IsF{i}*X$. We are asked to check that the fibre of $\pi$ at $1\bcolon \II$ is precisely $X$, \emph{i.e.}\ that the canonical map $X \to \IsF{1}*X$ is an equivalence. But this is one of the equivalent formulations of $X$ being $\IsF{1}$-connected or, equivalently, $\IsT{0}$-connected.
\end{proof}

\subsection{The generic inner horn inclusion}

Another important figure is the \emph{generic inner horn} $\Horn$, which represents the shape of a pair of composable arrows. In type theoretic terms, we may compute the generic inner horn following Riehl and Shulman~\cite{riehl-shulman:2017} as the following subspace of the triangle $\Spx{\Two}$:
\[
  \Horn \colondefeq \{ (i \geq j) \bcolon \Spx{\Two} \mid \IsF{j} \lor \IsT{i}\}
\]

From a geometrical point of view, the above amounts to glueing the interval onto itself end-to-end with the images of each copy of the interval forming open and closed subspaces respectively:
\[
  \begin{tikzcd}
    |[elbow nw]|\One
      \ar[r,closed embedding, "0"]
      \ar[d,open embedding, "1"']
    &
    \II
      \ar[d,open embedding, "({-}\geq 0)"]
    \\
    \II
      \ar[r,closed embedding, "(1\geq {-})"']
    &
    |[elbow se]|\Horn
  \end{tikzcd}
\]

We observed in \S~\ref{sec:cubes-and-simplices} that the triangle $\Spx{\Two}$ is the sum of all the slices $\II/i$. A similar observation can be made of the inner horn.

\begin{observation}[{Pugh and Sterling}]\label{obs:inner-horn-as-sum-of-scones}
  The generic inner horn $\Horn$ is the sum of all the little Sierpi\'nski cones $\IsT{i}_\bot$:
  \[
    \Horn \cong \textstyle\sum_{(i\bcolon\II)}\IsT{i}_\bot
  \]
  Hence the generic inner horn inclusion $\Horn\hookrightarrow\Spx{\Two}$ can be re-cast as the sum of all the inclusions $\IsT{i}_\bot\hookrightarrow \II/i$ as depicted below:
  \[
    \begin{tikzcd}[ampersand replacement=\&]
      |[elbow nw]|\IsT{i}_\bot\ar[r,hookrightarrow]\ar[d,hookrightarrow] \& |[elbow nw]|\Horn \ar[d,hookrightarrow]\ar[r,"\cong"description] \& \sum_{(i\bcolon\II)}\IsT{i}_\bot\ar[d,hookrightarrow]
      \\
      |[elbow nw]|\II/i\ar[r,hookrightarrow]\ar[d] \& |[elbow nw]|\Spx{\Two} \ar[d,"\max"description]\ar[r,"\cong"description] \& \sum_{(i\bcolon\II)} \II/i\ar[d,"\pi_1"]
      \\
      \{i\} \ar[r,hookrightarrow] \& \II\ar[r,equals,nfold] \& \II
    \end{tikzcd}
  \]
\end{observation}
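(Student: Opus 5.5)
The plan is to compute the fibre of $\Horn$ over each $i\bcolon\II$ under the projection $\max\colon\Spx{\Two}\to\II$, $(i\geq j)\mapsto i$, and to identify it with $\IsT{i}_\bot$. Using $\Spx{\Two}\cong\sum_{(i\bcolon\II)}\II/i$ from \S~\ref{sec:cubes-and-simplices}, the subspace $\Horn$ becomes
\[
  \Horn \cong \textstyle\sum_{(i\bcolon\II)}\{ j\bcolon \II/i \mid \IsF{j}\lor\IsT{i}\}\text{,}
\]
and the inclusion $\Horn\hookrightarrow\Spx{\Two}$ is then the sum over $i$ of the fibrewise inclusions. So it suffices to construct, for each $i$, an equivalence
\[
  \IsT{i}_\bot \simeq \{ j\bcolon\II/i\mid \IsF{j}\lor\IsT{i}\}
\]
over $\II/i$. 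The pullback squares in the diagram then follow because $\sum$ commutes with fibres.

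For the fibrewise equivalence, I unfold $\IsT{i}_\bot \equiv \sum_{(j\bcolon\II)}\IsF{j}*\IsT{i}$. Since $\IsF{j}$ and $\IsT{i}$ are propositions, the join $\IsF{j}*\IsT{i}$ is the propositional disjunction $\IsF{j}\lor\IsT{i}$. This uses the standard fact that the join of two propositions is a proposition, namely their truncated disjunction. Hence $\IsT{i}_\bot\simeq\{j\bcolon\II\mid \IsF{j}\lor\IsT{i}\}$. It remains to show that every $j$ satisfying $\IsF{j}\lor\IsT{i}$ automatically satisfies $j\leq i$, so that the subspace of $\II$ coincides with the subspace of $\II/i$. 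Since $j\leq i$ is a proposition, I argue by cases on the disjunction. If $j=0$, then $j\leq i$ holds trivially. If $i=1$, then $j\leq 1=i$. The resulting map $\IsT{i}_\bot\to\II/i$ is the canonical inclusion $\bot\mapsto 0$, $\ast\mapsto i$ (with $i=1$), and the square commutes by construction.

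Next I must check that the abstract pushout presentation $X_\bot$ used in the paper, specialised to $X=\IsT{i}$, agrees with this description. This holds by definition, since the paper defines $X_\bot$ as the sum $\sum_{(j\bcolon\II)}\IsF{j}*X$. Finally, reassembling the sum over $i$ and composing with $\Spx{\Two}\cong\sum_i\II/i$ gives $\Horn\cong\sum_i\IsT{i}_\bot$, compatibly with the inclusions.

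The only step needing care is identifying the join of propositions with their disjunction. No lattice hypotheses such as conservativity are needed for this observation, which matches its unconditional statement.
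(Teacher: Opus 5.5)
Your proposal is correct and follows the same route as the paper: decompose $\Horn$ fibrewise over $\max$, then identify $\IsF{j}*\IsT{i}$ with the disjunction $\IsF{j}\lor\IsT{i}$, using that the join of two propositions is their disjunction. You also spell out the check that $\IsF{j}\lor\IsT{i}$ forces $j\le i$, which the paper's second isomorphism uses without comment.
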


\begin{proof}
  Here we use the fact that the closed modality $\IsF{j}*P$ for a proposition $P$ is precisely the disjunction $\IsF{j}\lor P$.
  \begin{align*}
    \Horn &\equiv \{ (i \geq j) \bcolon \Spx{\Two} \mid \IsF{j} \lor \IsT{i}\}\\
    &\cong \textstyle\sum_{(i\bcolon\II)}\{ j\colon\II \mid \IsF{j}\lor \IsT{i}\}\\
    &\cong \textstyle\sum_{(i\bcolon\II)} \sum_{(j\colon\II)} \IsF{j} * \IsT{i}\\
    &\equiv \textstyle\sum_{(i\bcolon\II)} \IsT{i}_\bot\qedhere
  \end{align*}
\end{proof}

\subsection{Open display \& mapping cylinders}\label{sec:mapping-cylinders}

The Sierpi\'nski cone $X_\bot$ is actually a special case of a more general dependently typed construction that replaces $X$ with a \emph{family} of types, which we here call the \emph{open display cylinder}. The latter is novel to this work, but we show that it is equivalent to the classical notion of \emph{open mapping cylinder} under the usual correspondence between maps and families of types.

\begin{definition}
  Let $x\bcolon B\vdash E[x]$ be a family of types; the \emph{open display cylinder} of $E$ over $B$ is defined to be the sum
  \[
    \OpenMCyl_{(x\bcolon B)}E[x]
    \colondefeq
    \textstyle\sum_{(x\bcolon B)}
    E[x]_\bot
  \]
  of all the Sierpi\'nski cones of the fibres of $E$.
\end{definition}

The \emph{open mapping cylinder} of a function $f\colon E\to B$ is defined to be open display cylinder of the family $x\bcolon B\vdash \mathsf{fib}_f(x)$:
\[
  \OpenMCyl_B(f) \colondefeq \OpenMCyl_{(x\bcolon B)}\mathsf{fib}_f(x)\text{.}
\]

Restricting attention to the open mapping cylinder, we recall that the summing functor $\sum_B\colon\mathbf{Type}/B\to\mathbf{Type}$ preserves colimits and so the following is a pushout square:
\[
  \begin{tikzcd}[column sep=large]
    \sum_{(x\bcolon B)}\mathsf{fib}_f(x)
      \ar[rr, closed embedding, "{\sum_{(x\bcolon B)}(0, \mathsf{fib}_f(x))}"]
      \ar[d, "{\sum_{(x\bcolon B)}\star}"']
    &&
    \sum_{(x\bcolon B)}\II\times \mathsf{fib}_f(x)
      \ar[d, "{\sum_{(x\bcolon B)}\gamma_{\mathsf{fib}_f(x)}}"]
    \\
    \sum_{(x\bcolon B)}\One
      \ar[rr, closed embedding, "{\sum_{(x\bcolon B)}\bot_{\mathsf{fib}_f(x)}}"']
    &&
    |[elbow se]| \OpenMCyl_{(x\bcolon B)}\mathsf{fib}_f(x)
  \end{tikzcd}
\]

Adjusting by the canonical equivalence $E\cong \sum_{(x\bcolon B)}\mathsf{fib}_f(x)$ and a few others, we obtain the familiar categorical description of the open mapping cylinder:
\[
  \begin{tikzcd}
    E\ar[d,"f"']\ar[r,closed embedding, "{(0,E)}"]& \II\times E\ar[d,"\gamma_f"description] & E\ar[l,open embedding, "{(1,E)}"']\ar[dl,hookrightarrow,sloped,"\iota_f"']\\
    B\ar[r,closed embedding, "\bot_f"'] & |[elbow se]|\OpenMCyl_B(f)
  \end{tikzcd}
\]

\begin{observation}
  When $f\colon E\to B$ is a $\IsT{0}$-connected map, the embedding $\iota_f\colon E\hookrightarrow \OpenMCyl_B(f)$ is open.
\end{observation}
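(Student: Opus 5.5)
The plan is to reduce the claim to Observation~\ref{lem:X-01-connected-incl-open}, applied fibrewise, and then sum over $B$. Recall that $\OpenMCyl_B(f) \equiv \sum_{(x\bcolon B)} \mathsf{fib}_f(x)_\bot \equiv \sum_{(x\bcolon B)}\sum_{(i\bcolon\II)} \IsF{i} * \mathsf{fib}_f(x)$. Under the equivalence $E \cong \sum_{(x\bcolon B)}\mathsf{fib}_f(x)$, the map $\iota_f$ is $\sum_{(x\bcolon B)} \iota_{\mathsf{fib}_f(x)}$. So I will exhibit an observation $\varphi\colon \OpenMCyl_B(f)\to\II$ and check directly that $\iota_f$ is its pullback of $\{1\}\hookrightarrow\II$.

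First, I take $\varphi$ to be the composite of the projection $\sum_{(x\bcolon B)}\mathsf{fib}_f(x)_\bot \to \mathsf{fib}_f(x)_\bot$ (fibrewise) with the map $\pi\colon \mathsf{fib}_f(x)_\bot\to\II$ of Observation~\ref{lem:X-01-connected-incl-open}. Concretely, $\varphi(x,i,u) \colondefeq i$. Second, I compute the fibre of $\varphi$ over $1$:
\[
  \textstyle\sum_{(x\bcolon B)} \bigl(\IsF{1} * \mathsf{fib}_f(x)\bigr).
\]
Third, I use the hypothesis. That $f$ is $\IsT{0}$-connected means by definition that each fibre $\mathsf{fib}_f(x)$ is $\IsT{0}$-connected, equivalently $\IsF{1}$-connected, since $\IsF{1}\Leftrightarrow\IsT{0}$. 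By Definition~\ref{def:P-connectedness}(2), each join constructor $\mathsf{fib}_f(x)\to \IsF{1}*\mathsf{fib}_f(x)$ is therefore an equivalence. Summing over $B$, the fibre of $\varphi$ over $1$ is $\sum_{(x\bcolon B)}\mathsf{fib}_f(x)\cong E$, and the comparison map is exactly $\iota_f$. Hence the square with $\iota_f$, $E\to\{1\}$, $\varphi$ and $\{1\}\hookrightarrow\II$ is a pullback, so $\iota_f$ is a cloven open embedding.

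The main point requiring care is the identification of the comparison map $E\to\varphi^{-1}(1)$ with $\iota_f$ as defined through the pushout. This holds because $\iota_f$ was obtained by summing the fibrewise maps $\iota_{\mathsf{fib}_f(x)}$, each of which sends $u$ to $(1,\mathsf{inr}\,u)$. Pullbacks along the sum of families are computed fibrewise, so the fibrewise pullback squares of Observation~\ref{lem:X-01-connected-incl-open} assemble into the required square after summing. I therefore expect no real obstacle beyond this bookkeeping.
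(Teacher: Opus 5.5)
Your proposal is correct and takes essentially the same approach as the paper. The paper's proof likewise applies Observation~\ref{lem:X-01-connected-incl-open} fibrewise, using that $f$ is connected exactly when its fibres are; your explicit observation $(x,i,u)\mapsto i$ and your computation of its fibre over $1$ just unfold that one-line argument.
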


\begin{proof}
  By Lemma~\ref{lem:X-01-connected-incl-open}, noting that $f$ is $\IsT{0}$-connected if and only if $x\bcolon B\vdash \mathsf{fib}_f(x)$ is a family of $\IsT{0}$-connected types.
\end{proof}

\subsection{The Sierpi\'nski comparison map}\label{sec:sierpinski-comparison-map}

We now describe the comparison map $\sigma_X\colon X_\bot\to \Lift(X)$. This map exists only for \emph{$\IsT{0}$-connected} $X$ in the sense of Definition~\ref{def:P-connectedness}, but we keep in mind that if $\J$ is strict, then every $X$ is $\IsT{0}$-connected. Evidently, both $\IsT{i}$ and $\IsF{i}$ are $\IsT{0}$-connected for any $i\bcolon\II$.

\begin{construction}
  When $X$ is $\IsT{0}$-connected, we may construct the \emph{undefined} partial element $\mathsf{undef}_X : \Lift(X)$ using the universal property of the partial product $\Lift(X)$:
  \[
    \begin{tikzcd}[column sep=large]
      X
      \ar[d,open embedding, "\eta_X"']
      &
      |[elbow nw, elbow ne]|\IsT{0}
      \ar[l,"!"']
      \ar[r, closed embedding]
      \ar[d,open embedding]
      &
      \{1\}
      \ar[d,open embedding]
      \\
      \Lift(X)
      &
      \One
      \ar[l, densely dashed, "\mathsf{undef}_X"]
      \ar[r,closed embedding, "0"']
      &
      \II
    \end{tikzcd}
  \]
\end{construction}

\begin{construction}
  For $\IsT{0}$-connected $X$, we can construct an interpolation \[ \mathsf{glue}_X\colon \II\times X \to \Lift(X)\] between the undefined element the fully defined element as follows:
  \[
    \begin{tikzcd}[column sep=large]
      X
      \ar[d,open embedding, "\eta_X"']
      &
      |[elbow nw, elbow ne]|X
      \ar[l,equals,nfold]
      \ar[r]
      \ar[d,open embedding,"{(1,X)}"description]
      &
      \{1\}
      \ar[d,open embedding]
      \\
      \Lift(X)
      &
      \II\times X
      \ar[l, densely dashed, "\mathsf{glue}_X"]
      \ar[r, "\pi_1"']
      &
      \II
    \end{tikzcd}
  \]
\end{construction}

\begin{construction}
  The comparison map $\sigma_X\colon X_\bot\to\Lift(X)$ for $\IsT{0}$-connected $X$ can be equivalently constructed using either the universal property of $X_\bot$ or the universal property of $\Lift(X)$:
  \[
    \begin{tikzcd}[column sep=large]
      X \ar[r, closed embedding, "{(0,X)}"]\ar[d]
      & \II\times X
        \ar[d, "\mathsf{glue}_X"]
      \\
      \One
        \ar[r, "\mathsf{undef}_X"']
      & \Lift(X)
    \end{tikzcd}
    \quad
    \begin{tikzcd}[column sep=large]
      X
      \ar[d,open embedding, "\eta_X"']
      &
      |[elbow nw, elbow ne]|X
      \ar[l,equals,nfold]
      \ar[r]
      \ar[d,open embedding,"\iota_X"description]
      &
      \{1\}
      \ar[d,open embedding]
      \\
      \Lift(X)
      &
      X_\bot
      \ar[l, densely dashed, "\sigma_X"]
      \ar[r, "\pi"']
      &
      \II
    \end{tikzcd}
  \]
\end{construction}

The following is a direct corollary of Observations~\ref{obs:slice-of-interval-as-pmc} and~\ref{obs:inner-horn-as-sum-of-scones}.

\begin{corollary}[{Pugh and Sterling}]
  When $\J$ is conservative, the generic inner horn inclusion $\Horn\hookrightarrow\Spx{\Two}$ may be identified with the sum of all the ``little'' comparison maps $\sigma_{\IsT{i}}\colon \IsT{i}_\bot\to \Lift\IsT{i}$ in the sense of the diagram below:
  \[
    \begin{tikzcd}
      |[elbow nw]|\IsT{i}_\bot\ar[r,hookrightarrow]\ar[d,hookrightarrow, "\sigma_{\IsT{i}}"'] & |[elbow nw]|\Horn \ar[d,hookrightarrow]\ar[r,"\cong"description] & \sum_{(i\bcolon\II)}\IsT{i}_\bot\ar[d,hookrightarrow, "\sigma_{\IsT{i}}"]
      \\
      |[elbow nw]|\Lift\IsT{i}\ar[r,hookrightarrow]\ar[d] & |[elbow nw]|\Spx{\Two} \ar[d,"\max"description]\ar[r,"\cong"description] & \sum_{(i\bcolon\II)} \Lift\IsT{i}\ar[d,"\pi_1"]
      \\
      \{i\} \ar[r,hookrightarrow] & \II\ar[r,equals,nfold] & \II
    \end{tikzcd}
  \]
\end{corollary}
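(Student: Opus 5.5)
We prove the corollary by assembling the two cited observations fibrewise over $\II$ and then checking that the resulting vertical maps agree with the comparison maps. By Observation~\ref{obs:inner-horn-as-sum-of-scones}, the inclusion $\Horn\hookrightarrow\Spx{\Two}$ is already identified with the sum over $i\bcolon\II$ of the inclusions $\IsT{i}_\bot\hookrightarrow\II/i$. This identification is compatible with the projection $\max\colon\Spx{\Two}\to\II$, which corresponds to $\pi_1$. By Observation~\ref{obs:slice-of-interval-as-pmc}, conservativity of $\J$ gives an equivalence $\II/i\cong\Lift\IsT{i}$ for each $i$. Summing these over $\II$ yields the bottom row of the diagram. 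Everything therefore reduces to one fibrewise claim: for each $i\bcolon\II$, the composite $\IsT{i}_\bot\hookrightarrow\II/i\cong\Lift\IsT{i}$ equals $\sigma_{\IsT{i}}$. This makes sense because $\IsT{i}$ is $\IsT{0}$-connected, so $\sigma_{\IsT{i}}$ is defined.

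First, I make the equivalence $\II/i\cong\Lift\IsT{i}$ explicit. It sends $j\leq i$ to the pair consisting of $j$ and the unique map $\IsT{j}\to\IsT{i}$. Such a map exists because $j\leq i$ and $\IsT{-}$ is monotone, and it is unique because the target is a proposition. The inverse sends $(j,u)$ with $u\colon\IsT{j}\to\IsT{i}$ to $j$, and conservativity guarantees that $j\leq i$. The essential point is that $\IsT{j}\to\IsT{i}$ is a proposition equivalent to $j\leq i$; this is exactly the order-embedding part of conservativity.

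Second, I compare the two maps out of the pushout $\IsT{i}_\bot$. Since the target is a set ($\II$ is a set and $\IsT{j}\to\IsT{i}$ is a proposition), it suffices to check that they agree on the point constructor $\bot$ and on the image of $\II\times\IsT{i}$. On $\bot$, the composite gives $j=0$. The element $\mathsf{undef}_{\IsT{i}}$ has first component $0$, and its second component is the unique map $\IsT{0}\to\IsT{i}$, so the two agree. On $(j,p)$ with $p\colon\IsT{i}$, the inclusion into $\II/i$ sends the point to $j$: here $i=1$, so the bound $j\leq i$ holds trivially. Meanwhile $\mathsf{glue}_{\IsT{i}}(j,p)=(j,\mathsf{const}(p))$, which agrees once we use propositional uniqueness of the second component. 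No coherence for the glueing path is needed, because the target is a set.

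The main obstacle is making sure the inclusion $\IsT{i}_\bot\hookrightarrow\II/i$ in Observation~\ref{obs:inner-horn-as-sum-of-scones} is described concretely enough to carry out this comparison. That inclusion was obtained through the chain of equivalences in the proof of that observation, with $\IsF{j}*\IsT{i}$ identified with $\IsF{j}\lor\IsT{i}$. I would unfold this chain and confirm that the pushout constructors of the join are sent to the evident elements $j=0$ and $j$ (under $\IsT{i}$). Once that is confirmed, the fibrewise identification holds. Summing over $i\bcolon\II$ then produces both squares of the diagram, and the bottom squares commute by construction of the projections.
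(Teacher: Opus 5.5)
Your proposal is correct and follows the paper's route: the paper states this as a direct consequence of Observations~\ref{obs:slice-of-interval-as-pmc} and~\ref{obs:inner-horn-as-sum-of-scones}. Your fibrewise check, that the composite $\IsT{i}_\bot\hookrightarrow\II/i\cong\Lift\IsT{i}$ agrees with $\sigma_{\IsT{i}}$, is sound and spells out what the paper leaves implicit. It is even immediate once you note that $\Lift\IsT{i}$ is a subtype of $\II$ via its first projection and that both maps lie over $\pi\colon\IsT{i}_\bot\to\II$.
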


\subsection{Relative partial map classifiers}

The Sierpi\'nski comparison map from \S~\ref{sec:sierpinski-comparison-map} exhibits an interesting relationship between the Sierpi\'nski cone $X_\bot$ and the cloven open-partial map classifier $\Lift(X)$. Whereas the Sierpi\'nski cone is built by geometrical means (glueing), the partial map classifier is built by \emph{logical} means; we might think of it as the logical counterpart to the Sierpi\'nski cone.
By the same token, we may consider a logical counterpart to the open display/mapping cylinder, namely the \emph{relative partial map classifier} of Awodey~\cite{Awodey2026-zm}.

\begin{definition}
  Let $x\bcolon B\vdash E[x]$ be a family of types; the \emph{relative cloven open-partial map classifier} of $E$ over $B$ is defined to be the sum
  $
    \Lift_{(x\bcolon B)}E[x] \colondefeq
    \textstyle\sum_{(x\bcolon B)}\Lift(E[x])
  $ of the cloven open-partial map classifiers of the components of $E$.

  Similarly, for a function $f\colon E\to B$ we define the relative cloven open-partial map classifier of $f$ to be the sum of the  cloven open-partial map classifier of its fibres:
  \[
    \Lift_{B}(f) \colondefeq
    \Lift_{(x\bcolon B)}\mathsf{fib}_f(x)
  \]
\end{definition}

\begin{definition}\label{def:mapping-cylinder-comparison}
  Let $x\bcolon B\vdash E[x]$ be a family of $\IsT{0}$-connected types; then we have an evident comparison map
  \[
    \sigma_{x\pdot E[x]}\colon \OpenMCyl_{(x\bcolon B)}E[x] \to \Lift_{(x\bcolon B)}E[x]
  \]
  from the open display cylinder to the relative cloven open-partial map classifier defined as the sum of the interior Sierpi\'nski comparison maps,
  $
    \sigma_{(x\bcolon B)\pdot E[x]} \colondefeq
    \textstyle\sum_{(x\bcolon B)}\sigma_{E[x]}
  $.

  Similarly, when $f\colon E\to B$ is a $\IsT{0}$-connected function, we have the comparison map
  $
    \sigma_{f}\colon \OpenMCyl_B(f)\to \Lift_B(f)
  $ defined analogously.
\end{definition}

In more categorical language, the comparison map above arises from the following evident square:
\[
  \begin{tikzcd}
    E
      \ar[r, closed embedding, "{(0,X)}"]
      \ar[d, "f"']
    & \II\times E
      \ar[d, "\mathsf{glue}_f"]
    \\
    B
      \ar[r, "\mathsf{und}_f"']
    & \Lift_{B}(f)
  \end{tikzcd}
\]

\section{Local subuniverses of spaces}

We now recall in short order several classes of maps against which we shall localise.
\begin{align*}
  \mathcal{L}_{\mathit{Segal}} &\colondefeq \{ \Horn \hookrightarrow \Spx{\Two}\}\\
  \mathcal{L}_{\mathit{Segal}}^{\mathit{based}} &\colondefeq \{ \IsT{i}_\bot\hookrightarrow \II/i \mid i\bcolon\II\}\\
  \mathcal{L}_{\mathit{Sierp}} &\colondefeq \{ \sigma_X \colon X_\bot \to \Lift(X) \mid X~\textit{$\IsT{0}$-connected}\}
  \\
  \mathcal{L}_{\mathit{Sierp}}^{\mathit{little}} &\colondefeq \{ \sigma_{\IsT{i}} \colon \IsT{i}_\bot\hookrightarrow \Lift\IsT{i} \mid i\bcolon\II\}
\end{align*}

\begin{definition}
  A type $C$ is called
  \begin{itemize}
  \item \emph{Segal complete} when it is orthogonal to $\mathcal{L}_{\mathit{Segal}}$,
  \item \emph{based Segal complete} when it is orthogonal to $\mathcal{L}_{\mathit{Segal}}^{\mathit{based}}$,
  \item \emph{Sierpi\'nski complete} when it is orthogonal to $\mathcal{L}_{\mathit{Sierp}} $,
  \item and \emph{little Sierpi\'nski} complete when it is orthogonal to $\mathcal{L}_{\mathit{Sierp}}^{\mathit{little}}$.
  \end{itemize}
\end{definition}

\begin{corollary}
  When $\J$ is conservative, a type is based Segal complete if and only if it is little-Sierpi\'nski complete.
\end{corollary}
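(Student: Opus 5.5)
The plan is to show that, for each $i\bcolon\II$, the two maps being localised against are identified with each other over their common domain. Orthogonality of a type $C$ to a map only depends on that map up to equivalence in the arrow category. So it suffices to show that each based Segal map $\IsT{i}_\bot\hookrightarrow \II/i$ is isomorphic, under $\IsT{i}_\bot$, to the little comparison map $\sigma_{\IsT{i}}\colon \IsT{i}_\bot\to\Lift\IsT{i}$. Then the two families $\mathcal{L}_{\mathit{Segal}}^{\mathit{based}}$ and $\mathcal{L}_{\mathit{Sierp}}^{\mathit{little}}$, both indexed by $i\bcolon\II$, have pointwise equivalent members. Hence $C^{\Lift\IsT{i}}\to C^{\IsT{i}_\bot}$ is an equivalence for all $i$ if and only if $C^{\II/i}\to C^{\IsT{i}_\bot}$ is.

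For the identification I will invoke Observation~\ref{obs:slice-of-interval-as-pmc}, which uses conservativity of $\J$ to give $\II/i\cong\Lift\IsT{i}$. Concretely, $j\le i$ corresponds to $(j,\ \cdot)\bcolon\sum_{(j\bcolon\II)}\IsT{i}^{\IsT{j}}$: since $\IsT{-}$ is an order embedding, $j\le i$ is equivalent to $\IsT{j}\Rightarrow\IsT{i}$, and the latter is the proposition $\IsT{i}^{\IsT{j}}$. This is exactly the pointwise content of the preceding corollary identifying $\Horn\hookrightarrow\Spx{\Two}$ with $\sum_i\sigma_{\IsT{i}}$.

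The step needing care is checking that this isomorphism makes the triangle with $\IsT{i}_\bot$ commute. That is, the composite $\IsT{i}_\bot\hookrightarrow\II/i\cong\Lift\IsT{i}$ should agree with $\sigma_{\IsT{i}}$. Since all types involved are subtypes of $\II$ or $\II\times\mathsf{Prop}$-like sums of propositions, I expect to argue directly with the descriptions:
\begin{itemize}
\item $\IsT{i}_\bot\equiv\sum_{(j\bcolon\II)}\IsF{j}\lor\IsT{i}$;
\item the based Segal map sends $(j,p)$ to $j$, with $j\le i$ holding because either $j=0$ or $i=1$;
\item $\sigma_{\IsT{i}}$ sends $(j,p)$ to $(j,\ \cdot)$, since $\sigma$ commutes with the projections to $\II$ by construction (it was defined over $\pi$).
\end{itemize}
Both composites have the same first component $j$. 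The second components lie in propositions, so they agree automatically, and the triangle commutes.

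The main obstacle, such as it is, lies in confirming that $\sigma_{\IsT{i}}$ indeed lies over $\II$. This follows from the second construction of $\sigma_X$ via the partial product, where the square with $\pi\colon X_\bot\to\II$ is part of the data. Given this, the corollary follows by transporting orthogonality along the equivalence of maps, pointwise in $i$.
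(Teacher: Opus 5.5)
Your proposal is correct and is essentially the paper's argument. The paper gives no separate proof: it treats this as immediate from the preceding corollary, which uses conservativity (via Observation~\ref{obs:slice-of-interval-as-pmc}) to identify each inclusion $\IsT{i}_\bot\hookrightarrow\II/i$ with $\sigma_{\IsT{i}}\colon\IsT{i}_\bot\to\Lift\IsT{i}$. Your check that the triangle commutes fills in the step the paper leaves implicit: both maps lie over $\II$ and land in a family of propositions, so they agree.
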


\begin{proposition}[{Pugh and Sterling~\cite{pugh-sterling-2025}}]\label{prop:interval-segal}
  When $\J$ satisfies Phoa's principle, $\II$ is Segal complete. If $\J$ is furthermore strict and dominant, then $\II$ is \emph{based} Segal complete.
\end{proposition}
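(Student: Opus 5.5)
We have to prove two things: that $\II$ is Segal complete under Phoa's principle, and that it is based Segal complete if moreover $\J$ is strict and dominant.

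\textbf{Segal completeness.} We show that restriction $\II^{\Spx{\Two}}\to\II^{\Horn}$ is a bijection; since $\II$ is a set, this suffices. The horn is the pushout of $\II\xleftarrow{1}\One\xrightarrow{0}\II$, so a map $\Horn\to\II$ is a pair of maps $f,g\colon\II\to\II$ with $f(1)=g(0)$.

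Phoa's principle presents each such map as $f(k)=a\lor(k\land b)$ with $a\le b$. For the horn maps this gives $f(k)=a\lor(k\land b)$ and $g(k)=b\lor(k\land c)$ with $a\le b\le c$.

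On the other side, $\Spx{\Two}\cong\Spec{\J}\J[\mathsf{x},\mathsf{y}]/(\mathsf{x}\ge\mathsf{y})$. Using the two-variable form of Phoa's principle (freely generated algebras are quasi-coherent), a map $\Spx{\Two}\to\II$ extends to $\II^2$ and is a lattice polynomial
\[
  \alpha(i,j)=a\lor(i\land b)\lor(j\land c)\lor(i\land j\land d).
\]
On the triangle $i\ge j$, this reduces to $a\lor(i\land b)\lor(j\land c')$. We then check that this normal form is unique and corresponds bijectively to the horn data $a\le b\le c$.

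This bookkeeping is where quasi-coherence is needed for the slice: the quotient $\J[\mathsf{x},\mathsf{y}]/(\mathsf{x}\ge\mathsf{y})$ must be quasi-coherent. I expect to cite this directly from Pugh and Sterling rather than rederive it.

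\textbf{Based Segal completeness.} Fix $i\bcolon\II$. We must show that restriction along $\IsT{i}_\bot\hookrightarrow\II/i$ is a bijection on maps into $\II$.

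First we describe the source. By dominance and Observation~\ref{obs:slice-of-interval-as-pmc}, $\II/i\cong\Lift\IsT{i}$. By strictness, $\IsT{i}$ is $\IsT{0}$-connected, so the cone is available. Since $\II$ is a set, a map $\IsT{i}_\bot\to\II$ amounts to a map $u\colon\II\to\II$ (restricted along $j\mapsto$ the glued point) whose restriction to $\IsT{i}$ is suitably constant. Concretely, it is a pair $(u_0, u')$ with $u'\colon\IsT{i}\to\II^{\II}$, agreeing at $0$.

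Next we describe the target. By Phoa's principle, maps $\II/i\to\II$ are of the form $j\mapsto a\lor(j\land b)$ with $a\le b\le$ (something determined by $i$). The key point is that the value $b$ is only relevant modulo $i$, since $j\le i$.

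We then compare the two descriptions. Given horn data $a$ together with a partial element $b\colon\IsT{i}\to\II$ satisfying $a\le b$, we must produce a unique total element $b\land i$. Dominance lets the partial element defined on the open $\IsT{i}$ be extended uniquely, via the $\Lift$ structure on $\II$ at $i$. This uses that $\II$ is itself a $\Lift$-algebra: in a dominance, joins of partial elements $\bigvee\{b\mid\IsT{i}\}$ exist as $i\land b$.

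\textbf{Main obstacle.} The hard step is this last one: showing $\II$ carries the structure of a partial-element join, namely that a partial map $\IsT{i}\to\II$ has a canonical value $\le i$. I would try the following. Define the value as the unique $c\le i$ with $\IsT{i}\Rightarrow c=b$. Dominance of $\{1\}\hookrightarrow\II$ gives that $\IsT{i}\land\IsT{b}$ is open, classified by some $c$, and conservativity makes $c$ unique. Strictness then handles the case $i=0$, where the data must collapse to $a$.
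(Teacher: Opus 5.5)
The paper does not prove this proposition; it imports it from Pugh and Sterling. So there is no in-paper argument to compare with. Your direct computation is a sound way to prove it, and you have located the crux of the based case correctly: the map $\II/i\to\II^{\IsT{i}}$, $c\mapsto\lambda p.\,c$, is a bijection.
\begin{itemize}
\item It is surjective because dominance gives $c$ with $\IsT{c}\Leftrightarrow\sum_{p:\IsT{i}}\IsT{\beta(p)}$, and conservativity then gives $c\le i$ and $c=\beta(p)$ under $p$.
\item It is injective because for $c,c'\le i$ we have $\IsT{c}\Rightarrow\IsT{i}$. So $\IsT{i}\Rightarrow c=c'$ yields $\IsT{c}\Leftrightarrow\IsT{c'}$, hence $c=c'$ by conservativity.
\end{itemize}

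One step, however, is justified incorrectly, and you use it in both halves. Quasi-coherence of $\J[\mathsf{x},\mathsf{y}]$ says that maps $\II^2\to\II$ are polynomials. It does not by itself let you extend a map defined only on the subspace $\Spx{\Two}$. Likewise, Phoa's principle says nothing directly about maps out of $\II/i$. What you need is that both are retracts: $(i,j)\mapsto(i,i\land j)$ retracts $\II^2$ onto $\Spx{\Two}$, and $j\mapsto j\land i$ retracts $\II$ onto $\II/i$. Every map out of them is then the restriction of its precomposite with the retraction. This is also exactly why $\J[\mathsf{x},\mathsf{y}]/(\mathsf{x}\ge\mathsf{y})$ is quasi-coherent: it is a retract of $\J[\mathsf{x},\mathsf{y}]$ in $\ALG{\J}$ via $\mathsf{y}\mapsto\mathsf{x}\land\mathsf{y}$. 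So there is nothing to cite.

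Your normal form $a\lor(i\land b)\lor(j\land c')$ on the triangle is also not unique as it stands. What you should prove is $\alpha(i,j)=\alpha(0,0)\lor(i\land\alpha(1,0))\lor(j\land\alpha(1,1))$ for $i\ge j$, using $j\land b\le i\land b$. This gives injectivity, since all three vertices lie in the horn. The same formula applied to horn data $a\le b\le c$ gives surjectivity.

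For the based case, the retraction gives $\alpha(j)=\alpha(0)\lor(j\land c)$ on $\II/i$, with $c\colondefeq i\land\alpha(i)\le i$. Restriction along $\IsT{i}_\bot\hookrightarrow\II/i$ sends $\alpha$ to $\bigl(\alpha(0),\lambda p.\,\lambda k.\,\alpha(0)\lor(k\land c)\bigr)$. By Phoa, a map $\IsT{i}_\bot\to\II$ is a pair $(a,u')$ with $u'(p)(k)=a\lor(k\land u'(p)(1))$. So both injectivity and surjectivity of restriction reduce to the bijection above. The inverse is $j\mapsto a\lor(j\land c)$, with $c$ classifying $\sum_{p:\IsT{i}}\IsT{u'(p)(1)}$.

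This argument is uniform in $i$, so drop the remark that strictness ``handles the case $i=0$''. A case split on whether $i=0$ is not available constructively, and none is needed. The repaired argument uses only Phoa's principle, conservativity, and closure of open propositions under dependent sums. It needs neither strictness nor the identification $\II/i\cong\Lift\IsT{i}$. Note also that $\IsT{i}$ is $\IsT{0}$-connected for every $i$ regardless, as the paper observes.
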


\subsection{Segal \texorpdfstring{$\not=$}{!=} based Segal completeness}\label{sec:separating-based-segal}

It is immediate by Observation~\ref{obs:inner-horn-as-sum-of-scones} that based Segal completeness implies Segal completeness, and it was suggested by Ulrik Buchholtz in private correspondence that the converse might hold, \emph{i.e.}\ that every Segal complete type might also be based Segal complete; we now resolve Buchholtz's conjecture by showing that it cannot hold in any non-trivial lattice context satisfying Phoa's principle.

Our strategy to separate Segal and based Segal completeness is to show that the little Sierpi\'nski cones $\IsT{j}_\bot$ are Segal complete under appropriate assumptions, but clearly must not all be based Segal complete, as then each $\IsT{j}_\bot\hookrightarrow \II/j$ would be an equivalence and thus \emph{all} types would be based Segal complete. To carry out our argument, we must first develop a refined notion of \emph{properness}, which is closely related to various ones from synthetic topology.

\NewDocumentCommand\OpenSub{m}{\mathfrak{O}_{#1}}
\NewDocumentCommand\ClosedSub{m}{\mathfrak{K}_{#1}}

\begin{definition}
  Let $\mathfrak{Q}(X)\subseteq \mathbf{Sub}(X)$ be a replete class of subtypes  specified for all types $X$.\footnote{We will at times freely switch between viewing subtypes in terms of their embedding maps and their characteristic predicates. By \emph{replete} we mean stable under equivalence; of course, assuming the univalence axiom, all classes of subtypes are replete.}
  A type $X$ is said to be \emph{$\mathfrak{Q}$-proper} when for any $\phi \in \Prop$ and $\psi\in \mathfrak{Q}(X)$ we may derive the following dual Frobenius law:
  \[
    \phi \lor \forall x\bcolon X\pdot\psi(x) \Longleftrightarrow \forall x\bcolon X\pdot\phi\lor\psi(x)\text{.}
  \]
\end{definition}

The following example is due to Andrej Bauer.

\begin{example}
  Let $\mathbf{Sub}(X)$ be the class of all subtypes of $X$. If every type is $\mathbf{Sub}$-proper, then the law of excluded middle holds.
\end{example}

\begin{lemma}%
  Let $\mathfrak{Q}$ be a replete class of subobjects. Let $f \colon X \to Y$ and $g \colon X \to Z$ be maps such that $Y$ and $Z$ are $\mathfrak{Q}$-proper. Then the pushout $Y+_XZ$ is $\mathfrak{Q}$-proper.
\end{lemma}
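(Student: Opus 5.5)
The plan is to check the dual Frobenius law directly, using the fact that $\Prop$ is a distributive lattice and that pushouts are jointly surjective onto propositions. Write $W \colondefeq Y+_XZ$ with constructors $\mathsf{inl}\colon Y\to W$ and $\mathsf{inr}\colon Z\to W$. Fix $\phi\in\Prop$ and $\psi\in\mathfrak{Q}(W)$.

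The left-to-right implication $\phi\lor\forall w\bcolon W\pdot\psi(w)\Rightarrow\forall w\bcolon W\pdot\phi\lor\psi(w)$ holds for any type, by cases on the disjunction. So everything rests on the converse.

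For the converse, assume $\forall w\bcolon W\pdot \phi\lor\psi(w)$.
\begin{enumerate}
  \item Restricting along the constructors gives $\forall y\bcolon Y\pdot\phi\lor\psi(\mathsf{inl}\,y)$ and $\forall z\bcolon Z\pdot\phi\lor\psi(\mathsf{inr}\,z)$.
  \item To use $\mathfrak{Q}$-properness of $Y$ and $Z$, I need the restricted subtypes $\psi\circ\mathsf{inl}$ and $\psi\circ\mathsf{inr}$ to lie in $\mathfrak{Q}(Y)$ and $\mathfrak{Q}(Z)$.
  \item Granting this, properness of $Y$ and $Z$ yields $\phi\lor A$ and $\phi\lor B$, where $A\colondefeq\forall y\pdot\psi(\mathsf{inl}\,y)$ and $B\colondefeq\forall z\pdot\psi(\mathsf{inr}\,z)$.
  \item Distributivity of $\Prop$ turns $(\phi\lor A)\land(\phi\lor B)$ into $\phi\lor(A\land B)$.
  \item It remains to show $A\land B\Rightarrow\forall w\bcolon W\pdot\psi(w)$. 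This is pushout induction into the family of propositions $\psi$: the point constructors are handled by $A$ and $B$, and the path constructor case is automatic because each $\psi(w)$ is a proposition.
\end{enumerate}

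The main obstacle is step 2: $\psi\circ\mathsf{inl}\in\mathfrak{Q}(Y)$ and $\psi\circ\mathsf{inr}\in\mathfrak{Q}(Z)$. Repleteness alone does not give this. I would therefore make explicit that the argument uses stability of $\mathfrak{Q}$ under pullback (reindexing along arbitrary maps). This holds for all the classes the paper has in mind, such as open or closed subspaces: these are pulled back from $\{1\}\hookrightarrow\II$ or $\{0\}\hookrightarrow\II$, and pullbacks compose, so the restriction of an open (closed) subtype is again open (closed).

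With that hypothesis in place, every remaining step is a routine manipulation of propositions. If one wanted to avoid the hypothesis, the alternative would be to build the needed closure into the definition of $\mathfrak{Q}$-properness. I would not pursue this, since every intended instance of $\mathfrak{Q}$ is pullback-stable.
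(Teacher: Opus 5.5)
Your proof is correct and is essentially the paper's argument. Both restrict along the two legs of the pushout, apply properness of $Y$ and $Z$, distribute $\phi\lor({-})$ over the conjunction, and reassemble via the proposition-valued universal property of the pushout; the paper just writes this as a single chain of biconditionals.

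The pullback-stability of $\mathfrak{Q}$ that you flag is also used tacitly in the paper, when properness of $Y$ and $Z$ is applied to the restrictions of the predicate along $\iota_f$ and $\iota_g$. It is genuinely needed, not just convenient. Take the replete class that contains all predicates on propositions and none on other types. Then $\mathbf{2}\times P$ is vacuously proper, since it is a proposition only when it is empty. But two copies of it glued along $\mathbf{3}\times P$ in the shape of a path graph have pushout $P$, so the statement as written would yield $P\lor\neg P$. Your hypothesis does hold for the closed predicates, which are the only instance the paper uses.
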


\begin{proof}
  We consider the following pushout diagram:
  \[
    \begin{tikzcd}[ampersand replacement=\&]
      X\ar[r,"f"]\ar[d,"g"'] \& Y\ar[d, "\iota_f"]
      \\
      Z\ar[r, "\iota_g"'] \& |[elbow se]|P
    \end{tikzcd}
  \]

  For any $\chi \colon  P \to \Prop$ we obtain the equivalence
  \[ \forall p\bcolon P\pdot \chi(p) \iff  \forall y\bcolon Y\pdot\chi(y) \land \forall z\bcolon Z\pdot\chi(z) \]
  from the $\Prop$-valued universal property of the pushout.
  Fixing $\phi \in \Prop$ and $\psi \in \mathfrak{Q}(P)$ we have
  \begin{align*}
    &\phi \lor  \forall p\bcolon P\pdot \chi(p)
    \\
    &\quad\Leftrightarrow
    \phi \lor \bigl((\forall y\bcolon Y\pdot\chi(\iota_f(y))) \land (\forall z\bcolon Z\pdot\chi(\iota_g(z)))\bigr)
    \\&\quad\Leftrightarrow
    (\phi \lor \forall y\bcolon Y\pdot\chi(\iota_f(y))) \land (\phi \lor \forall z\bcolon Z\pdot\chi(\iota_g(z)))
    \\&\quad\Leftrightarrow
    (\forall y\bcolon Y\pdot\phi \lor \chi(\iota_f(y))) \land (\forall z\bcolon Z\pdot \phi \lor \chi(\iota_g(z)))
    \\&\quad\Leftrightarrow
    \forall p\bcolon P\pdot \phi \lor \chi(p)\text{.}\qedhere
  \end{align*}
\end{proof}

\begin{corollary}\label{lem:horn-is-proper}
  If $\II$ is $\mathfrak{Q}$-proper, then so is $\Horn$.
\end{corollary}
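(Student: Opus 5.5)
The plan is to read this off directly from the preceding lemma on pushouts, using the geometric presentation of the generic inner horn recalled in the subsection on the generic inner horn. There we saw that $\Horn$ is the pushout of the span $\II \xleftarrow{1} \One \xrightarrow{0} \II$: the apex is the point $\One$, and the two legs glue the endpoint $1$ of one copy of the interval to the endpoint $0$ of the other copy. Taking $X \colondefeq \One$, $Y \colondefeq \II$, $Z \colondefeq \II$, with $f \colondefeq 1$ and $g \colondefeq 0$, the preceding lemma says that $Y+_XZ$ is $\mathfrak{Q}$-proper as soon as $Y$ and $Z$ are. Both are $\II$, which is $\mathfrak{Q}$-proper by hypothesis, so the pushout is $\mathfrak{Q}$-proper.

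It then only remains to transport properness across the identification of this pushout with the subtype $\{(i\geq j)\bcolon\Spx{\Two}\mid \IsF{j}\lor\IsT{i}\}$ defining $\Horn$. For this we use that $\mathfrak{Q}$ is replete, so $\mathfrak{Q}$-properness is invariant under equivalence of types. Concretely, given an equivalence $e\colon P\simeq\Horn$, a subtype $\psi\in\mathfrak{Q}(\Horn)$ pulls back to $\psi\circ e\in\mathfrak{Q}(P)$. Moreover, the quantifiers $\forall h\bcolon\Horn$ and $\forall p\bcolon P$ agree after reindexing along $e$, so the dual Frobenius law transfers. Under univalence this step is immediate.

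The only step needing any care is confirming that the displayed square in the generic inner horn subsection really is a pushout, that is, that the canonical map from $\II+_\One\II$ to $\Horn$ is an equivalence. The paper asserts this as part of the geometric description of $\Horn$, so we take it as given. If one wished to verify it directly, the natural approach is to compare both sides as sums over $i\bcolon\II$, using Observation~\ref{obs:inner-horn-as-sum-of-scones}. In that description the disjunction $\IsF{j}\lor\IsT{i}$ is itself the pushout $\IsF{j}*\IsT{i}$, and colimits commute with $\sum$. I do not expect this to be needed for the corollary, however.
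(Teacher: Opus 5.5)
Your proposal is correct and matches the paper, which gives no separate proof: the corollary is meant as an immediate instance of the preceding pushout lemma applied to $\Horn\simeq\II+_{\One}\II$, and your use of repleteness to transport along that equivalence is the bookkeeping the paper leaves implicit. One caveat comes from the lemma, not from your argument: its proof needs $\mathfrak{Q}$ to be stable under restriction along the pushout legs (so that $\psi\circ\iota_f\in\mathfrak{Q}(Y)$), not merely replete. This holds for closed predicates, which is the case used later.
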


\begin{notation}
  We will write $\ClosedSub{\J}(X)$ for the class of closed predicates on $X$, recalling that  $\psi\colon X\to \Prop$ is closed when it factors through some observation $X\to \II$ by the indicator function $\IsF{-}\colon\II\to\Prop$.
\end{notation}

\begin{definition}
  Let $X$ be a type; the \emph{observational preorder} on $X$ is defined as follows:
  \[
    x,y\bcolon X\mid
    x\sqsubseteq_{\mathsf{obs}}^X y \Leftrightarrow
    \forall \phi \in \Opens{\J}(X)\pdot
    \phi(x) \le \phi(y)
  \]
\end{definition}

\begin{lemma}\label{lem:obs-preorder-with-greatest-element-proper}
  Any type whose observational preorder has a greatest element is $\ClosedSub{\J}$-proper.
\end{lemma}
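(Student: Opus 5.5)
Let $\top\bcolon X$ be a greatest element for $\sqsubseteq_{\mathsf{obs}}^X$. We must show that for any $\phi\in\Prop$ and any closed predicate $\psi\in\ClosedSub{\J}(X)$ we have
\[
  \phi \lor \forall x\bcolon X\pdot\psi(x) \Longleftrightarrow \forall x\bcolon X\pdot\phi\lor\psi(x)\text{.}
\]
The left-to-right direction holds in any type: in the case $\phi$ we get $\phi\lor\psi(x)$ for every $x$, and in the other case we instantiate the universal quantifier. The work lies in the converse.

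The plan is to show that, for a closed predicate, the universal quantifier over $X$ collapses to evaluation at the top element. That is, we aim for
\[
  \forall x\bcolon X\pdot\psi(x) \Longleftrightarrow \psi(\top)\text{.}
\]
Since $\psi$ is closed, choose an observation $\chi\in\Opens{\J}(X)$ with $\psi(x)\Leftrightarrow \IsF{\chi(x)}$; existence suffices here, since we are proving a proposition and may eliminate the truncation. The forward implication is instantiation at $\top$. For the backward implication, suppose $\chi(\top)=0$. For any $x$ we have $x\sqsubseteq_{\mathsf{obs}}^X\top$, which gives $\chi(x)\le\chi(\top)=0$, hence $\chi(x)=0$ by antisymmetry in $\J$. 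So $\psi(x)$ holds for all $x$.

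With this collapse the converse is immediate. Assuming $\forall x\bcolon X\pdot\phi\lor\psi(x)$, instantiate at $\top$ to obtain $\phi\lor\psi(\top)$. In the second case the collapse yields $\forall x\bcolon X\pdot\psi(x)$, so in either case we conclude $\phi\lor\forall x\bcolon X\pdot\psi(x)$.

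The only step needing care is the collapse. It uses that closed predicates are \emph{downward} closed for the observational preorder, precisely because $\IsF{-}$ is antitone and $0$ is the bottom of $\J$. This explains why a greatest element (rather than a least one) is the right hypothesis for $\ClosedSub{\J}$-properness. No further assumptions on $\J$ appear to be needed.
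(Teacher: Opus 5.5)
Your proof is correct and takes essentially the same route as the paper. Both arguments rest on the fact that any observation $\alpha\colon X\to\II$ is monotone from the observational preorder to the lattice order, so the closed predicate $\IsF{\alpha(-)}$ is antitone and universal quantification over $X$ can be tested at the top element. The only cosmetic difference is that you read monotonicity of $\chi$ directly off the definition of $\sqsubseteq_{\mathsf{obs}}^X$. The paper instead phrases it via monotonicity of all functions for observational preorders, together with the observational preorder on $\II$ being contained in its lattice order.
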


\begin{proof}
  We fix $\phi\in\Prop$ and $\alpha\colon X\to \II$ to check
  \[
    \phi \lor \forall x\bcolon X\pdot \IsF{\alpha(x)}\Longleftrightarrow
    \forall x\bcolon X\pdot\phi\lor \IsF{\alpha(x)}\text{.}
  \]

  By definition, \emph{every} function is monotone in the observational preorder; the observational preorder on $\II$ is a subrelation of the lattice order on $\II\equiv\mathsf{U}\J$. Hence any function $\alpha\colon X\to \II$ is monotone where $X$ has its observational preorder and $\II$ has the lattice order. Consequently, $\IsF{\alpha(x)}$ is \emph{antitone} in $x\bcolon X$ and thus so is $\phi\lor \IsF{\alpha(x)}$; we may therefore test their universal quantifications at the maximal element of $X$:
  \begin{align*}
    &\phi \lor \forall x\bcolon X\pdot \IsF{\alpha(x)}
    \\
    &\quad\Leftrightarrow \phi \lor \IsF{\alpha(\top_X)}
    \\
    &\quad\Leftrightarrow \forall x\bcolon X\pdot \phi \lor \IsF{\alpha(t)}
    \qedhere
  \end{align*}
\end{proof}

\begin{lemma}\label{lem:simplices-have-top-obs-elements}
  Assuming Phoa's principle, the simplices $\Spx{n}$ all have top elements in their observational preorders.
\end{lemma}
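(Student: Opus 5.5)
The plan is to exhibit the element $\top \equiv (1,1,\ldots,1)\bcolon\Spx{n}$ and show that it is observationally greatest: for every $x=(i_1\geq\cdots\geq i_n)\bcolon\Spx{n}$ and every observation $\phi\colon\Spx{n}\to\II$ we need $\phi(x)\leq\phi(\top)$. Since observations on $\Spx{n}$ are just functions into $\II$, the main task is to obtain enough control over arbitrary functions $\Spx{n}\to\II$. Phoa's principle provides exactly this, but only for functions $\II\to\II$ (or, by the equivalent form, for functions out of spectra of finitely generated free algebras). The strategy is therefore to reduce to the one-variable case by restricting $\phi$ along suitable paths in $\Spx{n}$.

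The key step is a monotonicity lemma: under Phoa's principle, every $\alpha\colon\II\to\II$ is monotone. This follows at once from $\alpha(i)=\alpha(0)\lor(i\land\alpha(1))$, because the right-hand side is monotone in $i$. In particular $\alpha(i)\leq\alpha(1)$ for all $i$, so $1$ is observationally greatest in $\II=\Spx{1}$. This settles $n=1$, and the case $n=0$ is trivial.

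For general $n$, fix $x=(i_1\geq\cdots\geq i_n)$ and climb from $x$ to $\top$ one coordinate at a time, starting from the top. Define paths $\gamma_k\colon\II\to\Spx{n}$ for $k=1,\ldots,n$ by
\[
  \gamma_k(t)\colondefeq(1,\ldots,1,\,t\lor i_k,\,i_{k+1},\ldots,i_n),
\]
where the first $k-1$ coordinates are $1$. Each $\gamma_k(t)$ is a descending chain, since $1\geq t\lor i_k\geq i_k\geq i_{k+1}$, so $\gamma_k$ really lands in $\Spx{n}$. We have $\gamma_k(0)=\gamma_{k-1}(1)$ for $k\geq 2$, $\gamma_1(0)=x$, and $\gamma_n(1)=\top$. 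Applying the monotonicity lemma to each composite $\phi\circ\gamma_k\colon\II\to\II$ gives
\[
  \phi(\gamma_k(0))\leq\phi(\gamma_k(1)),
\]
and chaining these $n$ inequalities yields $\phi(x)\leq\phi(\top)$, as required.

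The only point requiring care is that the paths stay inside $\Spx{n}$. Raising coordinates from the top (first) coordinate downward preserves the descending condition, and the substitution $t\lor i_k$ builds in the constraint $\geq i_k$ automatically. I therefore expect no real obstacle: all of the content lies in the one-variable monotonicity supplied by Phoa's principle. It is worth noting that the argument never uses a multivariable form of Phoa's principle or quasi-coherence of $\Spx{n}$, only the one-variable form applied along paths.
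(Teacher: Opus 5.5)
Your proof is correct. It has the same overall shape as the paper's: take an arbitrary observation $\alpha\colon\Spx{n}\to\II$ and show $\alpha(x)\leq\alpha(1,\ldots,1)$ via monotonicity coming from Phoa's principle. The difference is in how monotonicity is obtained.

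The paper simply asserts that Phoa's principle makes every $\alpha\colon\Spx{n}\to\II$ monotone for the order inherited from $\II^n$. It leaves implicit how a principle stated for maps $\II\to\II$ (or for free algebras, i.e.\ cubes) transfers to maps out of the simplex; one way would be the retraction $\II^n\to\Spx{n}$ given by $(j_1,\ldots,j_n)\mapsto(j_1,\,j_1\land j_2,\ldots)$. You fill exactly this step using only the one-variable equation $\alpha(i)=\alpha(0)\lor(i\land\alpha(1))$ along explicit paths, so your argument is more elementary and self-contained. The paper's formulation, for its part, states the stronger fact that every observation is monotone.

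The two are closer than they appear. Your $n$ paths can be merged into the single path $t\mapsto(t\lor i_1,\ldots,t\lor i_n)$, which stays in $\Spx{n}$ and runs from $x$ to $(1,\ldots,1)$. More generally, $t\mapsto x\lor(t\land y)$, computed coordinatewise, stays in $\Spx{n}$ and runs from $x$ to $y$ whenever $x\leq y$. So your one-variable trick also proves the paper's full monotonicity claim.
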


\begin{proof}
  We will show that $(1\ldots)$ is the greatest element of $\Spx{n}$ in its observational preorder. We fix $t\bcolon \Spx{n}$ and $\alpha\colon \Spx{n}\to \II$ to check that $\alpha(t)\leq \alpha(1\ldots)$. By Phoa's principle, $\alpha$ is monotone in the order on $\Spx{n}$ induced by the inclusion $\Spx{n}\hookrightarrow\II^n$, so the result follows.
\end{proof}

\begin{corollary}\label{cor:simplices-are-proper}
  Assuming Phoa's principle, every simplex $\Spx{n}$ is $\ClosedSub{\J}$-proper.
\end{corollary}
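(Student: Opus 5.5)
This is a direct combination of Lemma~\ref{lem:simplices-have-top-obs-elements} and Lemma~\ref{lem:obs-preorder-with-greatest-element-proper}. Fix $n$. Under Phoa's principle, Lemma~\ref{lem:simplices-have-top-obs-elements} says that the constant tuple $(1\ldots)$ is the greatest element of $\Spx{n}$ in its observational preorder $\sqsubseteq^{\Spx{n}}_{\mathsf{obs}}$. Lemma~\ref{lem:obs-preorder-with-greatest-element-proper} says that any type whose observational preorder has a greatest element is $\ClosedSub{\J}$-proper. Applying it with $X \equiv \Spx{n}$ and $\top_X \equiv (1\ldots)$ gives the claim.

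Unfolded, the argument runs as follows. Fix $\phi\in\Prop$ and a closed predicate on $\Spx{n}$, written as $\IsF{\alpha(-)}$ for some $\alpha\colon\Spx{n}\to\II$. By Phoa's principle, $\alpha$ is monotone for the coordinatewise order inherited from $\II^n$. Hence $\IsF{\alpha(t)}$ is antitone in $t$, and so is $\phi\lor\IsF{\alpha(t)}$. Both $\forall t\pdot\IsF{\alpha(t)}$ and $\forall t\pdot \phi\lor\IsF{\alpha(t)}$ therefore reduce to evaluation at $(1\ldots)$. This makes the two sides of the dual Frobenius law both equivalent to $\phi\lor\IsF{\alpha(1\ldots)}$.

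The only point that needs care is the degenerate case $n=0$. There $\Spx{0}$ is the point, the tuple $(1\ldots)$ is the empty tuple, and the statement is trivial. Beyond that, the only substantive input is the monotonicity from Phoa's principle, which is already packaged in Lemma~\ref{lem:simplices-have-top-obs-elements}.
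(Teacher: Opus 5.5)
Your proof is correct and takes the same approach as the paper, which also obtains the corollary by applying Lemma~\ref{lem:obs-preorder-with-greatest-element-proper} to the top element $(1\ldots)$ supplied by Lemma~\ref{lem:simplices-have-top-obs-elements}. Your unfolded paragraph just inlines those two lemmas, using the Phoa monotonicity in place of the observational preorder.
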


\begin{proof}
  By Lemma~\ref{lem:obs-preorder-with-greatest-element-proper} via Lemma~\ref{lem:simplices-have-top-obs-elements}.
\end{proof}

We finally come to our counterexample.

\begin{lemma}\label{lem:brouwerian-counterxample}
  Assume that Phoa's principle holds. Then for each $j\bcolon\II$, the little Sierpi\'nski cone $\IsT{j}_\bot$ is Segal complete.
\end{lemma}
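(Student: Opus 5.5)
The plan is to identify $\IsT{j}_\bot$ concretely as a subtype of the interval, and then reduce its Segal completeness to that of $\II$ (Proposition~\ref{prop:interval-segal}) together with a properness argument for the horn.

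Since the closed modality $\IsF{i}*P$ for a proposition $P$ is the disjunction $\IsF{i}\lor P$ (as used in the proof of Observation~\ref{obs:inner-horn-as-sum-of-scones}), we have
\[
  \IsT{j}_\bot \equiv \textstyle\sum_{(i\bcolon\II)}\IsF{i}*\IsT{j} \cong \{ i\bcolon \II \mid \IsF{i}\lor\IsT{j}\}\text{,}
\]
so $\IsT{j}_\bot$ is the subtype of $\II$ carved out by the predicate $\IsF{-}\lor\IsT{j}$. We must show that the restriction map $(\IsT{j}_\bot)^{\Spx{\Two}}\to(\IsT{j}_\bot)^{\Horn}$ is an equivalence. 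Write $C \equiv \IsT{j}_\bot$. Then $C^{\Spx{\Two}}$ and $C^{\Horn}$ are the subtypes of $\II^{\Spx{\Two}}$ and $\II^{\Horn}$ consisting of maps that land pointwise in $C$. By Proposition~\ref{prop:interval-segal}, which needs only Phoa's principle for plain Segal completeness, restriction $\II^{\Spx{\Two}}\to\II^{\Horn}$ is an equivalence. Hence the restriction map for $C$ is an embedding, and it remains only to check that it is surjective. Concretely, given $h\colon\Horn\to\II$ with $\forall x\bcolon\Horn\pdot \IsF{h(x)}\lor\IsT{j}$, we must show that its unique extension $\bar h\colon\Spx{\Two}\to\II$ satisfies $\forall t\bcolon\Spx{\Two}\pdot\IsF{\bar h(t)}\lor\IsT{j}$.

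The key step is to commute the disjunction with $\IsT{j}$ past the universal quantifier over $\Horn$. The predicate $x\mapsto\IsF{h(x)}$ is closed by definition, i.e.\ it lies in $\ClosedSub{\J}(\Horn)$. By Corollary~\ref{cor:simplices-are-proper} with $n=1$, the interval $\II\cong\Spx{1}$ is $\ClosedSub{\J}$-proper, and so by Corollary~\ref{lem:horn-is-proper} the horn $\Horn$ is $\ClosedSub{\J}$-proper. Taking $\phi\equiv\IsT{j}$, our hypothesis therefore yields
\[
  \IsT{j}\lor\forall x\bcolon\Horn\pdot\IsF{h(x)}\text{.}
\]
Because the goal is a proposition, we may argue by cases on this disjunction. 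If $\IsT{j}$ holds, the goal is immediate. If instead $h(x)=0$ for all $x\bcolon\Horn$, then $h$ equals the constant map at $0$, which has the constant map $0$ on $\Spx{\Two}$ as an extension. By uniqueness of extensions along $\Horn\hookrightarrow\Spx{\Two}$ in $\II$, we get $\bar h = 0$, and hence $\IsF{\bar h(t)}$ holds for every $t$.

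I expect the main obstacle to be the properness step, namely correctly invoking the dual Frobenius law for the horn with the closed predicate $\IsF{h(-)}$. It is exactly this law that allows the pointwise disjunction to be split globally; without it there is no evident reason for the extension to stay inside $C$. Everything else is bookkeeping about subtypes of the set $\II$. Note that uniqueness of extensions is automatic from the embedding $C\hookrightarrow\II$, so only existence required work.
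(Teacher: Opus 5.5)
Your proof is correct and rests on the same ingredients as the paper's. These are the identification $\IsT{j}_\bot\cong\{i\bcolon\II\mid\IsF{i}\lor\IsT{j}\}$, $\ClosedSub{\J}$-properness of $\Horn$ (obtained from $\II$ via the pushout lemma) to pull $\IsT{j}$ out of the universal quantifier, and Segal completeness of $\II$ to move $\mathsf{const}(0)$ between $\Horn$ and $\Spx{\Two}$. The paper instead runs a chain of subtype equivalences, which forces it to also invoke $\ClosedSub{\J}$-properness of $\Spx{\Two}$; your embedding-plus-surjectivity framing shows that step is unnecessary, since restricting a triangle in $\IsT{j}_\bot$ to the horn trivially stays in $\IsT{j}_\bot$.
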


\begin{proof}
  It is not difficult to characterise the type of triangles $\Spx{\Two}\to \IsT{j}_\bot$ in the little Sierpi\'nski cone $\IsT{j}_\bot$. In what follows, we will write $\iota\colon \Horn\hookrightarrow\Spx{\Two}$ for the inner horn inclusion.
  \begin{align*}
    &(\Spx{\Two}\to\IsT{j}_\bot)
    \\
    &\quad\cong
    \bigl(\Spx{\Two}\to\{i\bcolon\II\mid \IsF{i}\lor \IsT{j}\}\bigr)
    \\
    &\quad\cong \{
      \alpha\colon \Spx{\Two}\to \II
      \mid
      \forall t\bcolon \Spx{\Two}\pdot
      \IsF{\alpha(t)} \lor \IsT{j}
    \}
    \\
    &\quad\textit{because $\Spx{\Two}$ is $\ClosedSub{\J}$-proper (Corollary~\ref{cor:simplices-are-proper}):}
    \\
    &\quad\cong \bigl\{
      \alpha\colon \Spx{\Two}\to\II
      \mid
      (\forall t\bcolon \Spx{\Two}\pdot\IsF{\alpha(t)})\lor \IsT{j}
    \bigr\}
    \\
    &\quad\cong \bigl\{
      \alpha\colon \Spx{\Two}\to\II
      \mid
      (\alpha = \mathsf{const}(0))\lor \IsT{j}
    \bigr\}
    \\
    &\quad\textit{because $\II$ is Segal complete (Proposition~\ref{prop:interval-segal}):}
    \\
    &\quad\cong \bigl\{
      \alpha\colon \Spx{\Two}\to\II
      \mid
      (\alpha\circ\iota = \mathsf{const}(0)\circ\iota)\lor \IsT{j}
    \bigr\}
    \\
    &\quad\cong \{
      \alpha\colon \Horn\to\II
      \mid
      (\forall t\bcolon \Horn\pdot\IsF{\alpha(t)})\lor \IsT{j}
    \}
    \\
    &\quad\textit{because $\Horn$ is $\ClosedSub{\J}$-proper (Lemma~\ref{lem:horn-is-proper} and Corollary~\ref{cor:simplices-are-proper}):}
    \\
    &\quad\cong \{
      \alpha\colon \Horn\to\II
      \mid
      \forall t\bcolon\Horn\pdot\IsF{\alpha(t)}\lor \IsT{j}
    \}
    \\
    &\quad\cong
    (\Horn\to \IsT{j}_\bot)\text{.}
    \qedhere
  \end{align*}
\end{proof}

\begin{corollary}\label{cor:conjecture-implies-quotient-initial}
  Assume that Phoa's principle holds and that Segal completeness implies based Segal completeness. Then $\J$ is quotient-initial in the sense of Definition~\ref{def:quotient-initial}.
\end{corollary}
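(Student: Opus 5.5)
The plan is to fix an arbitrary $j\bcolon\II$ and apply the hypothesis to the type $C\colondefeq \IsT{j}_\bot$. By Lemma~\ref{lem:brouwerian-counterxample}, Phoa's principle makes $C$ Segal complete, so the assumed implication makes $C$ based Segal complete. In particular $C$ is orthogonal to the map $\IsT{j}_\bot\hookrightarrow\II/j$ indexed by $i\colondefeq j$ itself, so the restriction map
\[
  (\II/j\to\IsT{j}_\bot)\longrightarrow(\IsT{j}_\bot\to\IsT{j}_\bot)
\]
is an equivalence. The identity of $\IsT{j}_\bot$ then extends to a retraction $r\colon \II/j\to\IsT{j}_\bot$ of the inclusion $\IsT{j}_\bot\hookrightarrow\II/j$. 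The goal is to evaluate this retraction at the top element $j\bcolon\II/j$ and read off that $j=0$ or $j=1$.

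Recall from the proof of Observation~\ref{obs:inner-horn-as-sum-of-scones} that $\IsT{j}_\bot\cong\{k\bcolon\II\mid \IsF{k}\lor\IsT{j}\}$, sitting inside $\II/j=\{k\mid k\le j\}$. The element $j$ lies in $\II/j$, so $r(j)$ is some $k\bcolon\II$ with $\IsF{k}\lor\IsT{j}$. If $\IsT{j}$ holds, we are done. If instead $\IsF{k}$, so that $r(j)=0$, I want to conclude $j=0$. For this I use Phoa's principle: the composite $\II/j\xrightarrow{r}\IsT{j}_\bot\hookrightarrow\II$ is a function out of $\II/j\cong\Spec{\J}\J[\mathsf{x}]/(j\ge\mathsf{x})$, and such functions are monotone. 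More concretely, each $k\le j$ satisfies $k=k\land j$, so I would write $\beta(k)\colondefeq r(k\land j)$ as a map $\II\to\II$ and apply clause (3) of Phoa's principle to it. Monotonicity gives, for every $k\le j$, that $r(k)\le r(j)=0$. Since $r$ restricts to the identity on $\IsT{j}_\bot$, and $0\in\IsT{j}_\bot$, this yields $r(k)=k$ whenever $k\in \IsT{j}_\bot$.

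The hard step is getting $j=0$ out of this, since $j$ itself need not lie in $\IsT{j}_\bot$. My first attempt would be to use naturality in $j$ instead. The argument above is uniform in $j$, so we obtain a single function $\rho\colon\Spx{\Two}\to\Horn$ over $\II$ retracting the horn inclusion, namely $\rho(i,k)=(i,r_i(k))$. I then set $\alpha(i)\colondefeq r_i(i)\bcolon\II$ and evaluate at the endpoints. At $i=1$ we have $1\in\IsT{1}_\bot$, so $\alpha(1)=1$. At $i=0$ we have $0\in\IsT{0}_\bot$, so $\alpha(0)=0$. By Phoa's principle $\alpha(i)=\alpha(0)\lor(i\land\alpha(1))=i$, that is, $r_j(j)=j$ for every $j$. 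Combining this with $\IsF{r_j(j)}\lor\IsT{j}$ gives $\IsF{j}\lor\IsT{j}$ for every $j\bcolon\II$, which is exactly quotient-initiality in the sense of Definition~\ref{def:quotient-initial}. This uniform-in-$j$ argument makes the monotonicity detour above unnecessary. The one care point is to justify that the retractions $r_j$ depend continuously, i.e.\ functionally, on $j$. This follows because the orthogonality witness is a contractible choice, hence a genuine function of $j$.
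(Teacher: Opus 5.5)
Your proof is correct, but it takes a different route from the paper.

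\textbf{The paper's route.} The paper applies the hypothesis to \emph{both} $\IsT{j}_\bot$ and $\II/j$. The first is Segal complete by Lemma~\ref{lem:brouwerian-counterxample}. The second is Segal complete as a retract of $\II$, via Proposition~\ref{prop:interval-segal}. So the inclusion $\IsT{j}_\bot\hookrightarrow\II/j$ has a local domain and a local codomain, and the extension of the identity is therefore a two-sided inverse. Pulling this equivalence back along $\{j\}\hookrightarrow\II/j$ gives $\IsF{j}\lor\IsT{j}\cong\One$. That argument works for each $j$ separately.

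\textbf{Your route.} You use only the existence half of orthogonality for $\IsT{j}_\bot$. This gives a retraction $r_j$, but at a fixed $j$ it does not pin down $r_j(j)$; this is why your monotonicity detour stalls, and it can simply be deleted. You compensate by varying $j$ and applying clause (3) of Phoa's principle to the diagonal function $\alpha$ with $\alpha(i)=r_i(i)$. Its endpoint values $\alpha(0)=0$ and $\alpha(1)=1$ are forced because $0$ lies in $\IsT{0}_\bot$ and $1$ lies in $\IsT{1}_\bot$. This is sound for two reasons:
\begin{itemize}
\item The $r_j$ form a genuine internal dependent function, extracted from $\prod_{j}\mathsf{isEquiv}(\ldots)$.
\item Both $\IsT{j}_\bot$ and $\II/j$ are subtypes of the set $\II$, so all equalities can be checked on first components.
\end{itemize}

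\textbf{Comparison.} Your argument isolates a weaker sufficient condition: under Phoa's principle, any retraction $\Spx{\Two}\to\Horn$ of the horn inclusion that lies over $\II$ already forces quotient-initiality. You need neither the locality of $\II/j$ nor the step that a map local on both sides is an equivalence. The price is that your argument is intrinsically global in $j$ and uses Phoa's principle a second time directly. The paper's argument is pointwise in $j$ and uses Phoa's principle only through the Segal completeness of $\II$ and of $\IsT{j}_\bot$.
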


\begin{proof}
  Every slice $\II/j$ is Segal complete because it is a retract of $\II$, which is Segal complete by Phoa's principle; by Lemma~\ref{lem:brouwerian-counterxample}, we know that $\IsT{j}_\bot$ is Segal complete and thus, by assumption, both are based Segal complete. It therefore follows that the unique extension of the identity map $\IsT{j}_\bot\to\IsT{j}_\bot$ along $\IsT{j}_\bot\hookrightarrow \II/j$ provides an inverse to the latter. Considering the pullback of the isomorphism $\IsT{j}_\bot\hookrightarrow \II/j$ along $\{j\}\hookrightarrow \II/j$ we obtain an isomorphism $\IsF{j}\lor\IsT{j}\cong \{j\}\cong\One$ for each $j\bcolon\II$.
\end{proof}

\begin{corollary}
  It cannot be the case that $\J$ is strict, Phoa's principle holds, and Segal completeness implies based Segal completeness.
\end{corollary}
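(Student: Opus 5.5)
We argue by contradiction, combining Corollary~\ref{cor:conjecture-implies-quotient-initial} with Observation~\ref{obs:strict-plus-quotient-initial}. Suppose that $\J$ is strict, that Phoa's principle holds, and that Segal completeness implies based Segal completeness. The first and third assumptions already satisfy the hypotheses of Corollary~\ref{cor:conjecture-implies-quotient-initial}, so $\J$ is quotient-initial: every $i\bcolon\II$ satisfies $i=0$ or $i=1$. Since $\J$ is also strict, Observation~\ref{obs:strict-plus-quotient-initial} shows that $\mathbb{B}\to\J$ is an isomorphism, so $\J$ has exactly the two distinct elements $0\neq 1$.

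The remaining step is to contradict Phoa's principle for this two-element lattice. Quotient-initiality gives a decision procedure for equality with $1$ on $\II$, so we may define $\alpha\colon\II\to\II$ by $\alpha(1)\colondefeq 0$ and $\alpha(0)\colondefeq 1$; this is negation on $\mathbb{B}$. Phoa's principle (condition (3)) requires $\alpha(i)=\alpha(0)\lor(i\land\alpha(1))$ for all $i$. Taking $i=1$, the left side is $0$ and the right side is $1\lor(1\land 0)=1$, which would give $0=1$. This contradicts strictness, since $\IsT{0}$ must be false. Equivalently, Phoa's principle makes every map $\II\to\II$ monotone, but negation is antitone and not constant.

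The main obstacle is making the definition of $\alpha$ legitimate constructively. Defining $\alpha$ by cases needs the disjunction ``$i=0$ or $i=1$'' to be exclusive, so that the case split is a proposition and can be used to define a function. Exclusivity follows from $0\neq1$, which is strictness. Since $\II$ is a set and the cases are disjoint, $\II\cong\mathbf{2}$ as types, and the swap map transports to $\alpha$.

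Alternatively, we could avoid constructing $\alpha$ altogether: from $\J\cong\mathbb{B}$ we get $\II\cong\{0\}+\{1\}$, and the introduction already notes that this boundary inclusion being an equivalence trivialises the theory. The explicit $\alpha$ argument, however, is the more self-contained route.
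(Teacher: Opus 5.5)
Your proof is correct. Its first two steps, Corollary~\ref{cor:conjecture-implies-quotient-initial} followed by Observation~\ref{obs:strict-plus-quotient-initial}, are exactly the paper's.

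The routes differ only in where the final contradiction comes from. The paper cites Corollary~7.11 of Sterling and Ye: $\II$ is internally connected, in the sense of nullifying every set with decidable equality such as $\Two$, so the map $\II\to\Two$ inverse to the isomorphism would have to be constant. You instead apply condition~(3) of Phoa's principle directly to the negation map and read off $0=1$, which strictness forbids. Your case split is justified correctly: strictness makes the two cases exclusive, and $\II$ is a set, so the disjunction can be eliminated to define a function.

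Your route is more elementary and self-contained, since it uses only the definition of Phoa's principle. The paper's route appeals to a stronger, more general connectedness result. That result is more than is needed here, but it frames the contradiction conceptually: the interval cannot be a discrete two-point set.

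Two small points:
\begin{itemize}
\item The hypotheses of Corollary~\ref{cor:conjecture-implies-quotient-initial} are Phoa's principle and the Segal-to-based-Segal implication. Those are your second and third assumptions, not the first and third.
\item Your closing ``alternative'' does not stand on its own. Codiscreteness of all spaces is a degeneration, not a contradiction. Some further input, such as Phoa's principle or Sterling--Ye's connectedness, is still needed to turn $\II\cong\{0\}+\{1\}$ into an absurdity.
\end{itemize}
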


\begin{proof}
  By Corollary~\ref{cor:conjecture-implies-quotient-initial}, $\J$ is quotient-initial under these assumptions; as $\J$ is also assumed strict, the surjection $\epsilon\colon \Two\twoheadrightarrow \II$ is an isomorphism by Observation~\ref{obs:strict-plus-quotient-initial}. This is contradictory with Corollary~7.11 of Sterling and Ye~\cite{sterling2025domainsclassifyingtopoi}, which states that $\II$ is internally connected in the sense of nullifying any set $M$ with decidable equality, such as $M\equiv \Two$.
\end{proof}

The following corollaries are meant to be interpreted externally.

\begin{corollary}
  For the generic strict linear interval in the $\infty$-topos of simplicial spaces, it is not the case that based Segal completeness and Segal completeness coincide.
\end{corollary}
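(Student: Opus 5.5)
The plan is to derive this external statement from the preceding internal corollary, which says it cannot be the case that $\J$ is strict, Phoa's principle holds, and Segal completeness implies based Segal completeness. Concretely, I will take $\J$ to be the generic strict linear interval in the $\infty$-topos of simplicial spaces, meaning the representable $\Delta^1$ viewed as a bounded distributive lattice object with its max/min structure. I will then verify, in the internal language of that $\infty$-topos, the two hypotheses besides the conjecture: that $\J$ is strict and that Phoa's principle holds.

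Strictness is immediate from the earlier remark that a strict linear order satisfies every condition of Definition~\ref{def:lattice-conditions}, and in particular $\IsT{0}$ is false. For Phoa's principle, I will use the characterisation that $\J[\mathsf{x}]$ is quasi-coherent, equivalently that every $\alpha\colon\II\to\II$ satisfies $\alpha(i)=\alpha(0)\lor(i\land\alpha(1))$. I expect this step to be the main obstacle. Since $\II$ is representable, Yoneda reduces maps $\II\to\II$ in any context $\Delta^n$ to monotone maps $[1]\times[n]\to[1]$. These correspond exactly to the pairs $\alpha(0)\le\alpha(1)$ in $\II$ at stage $n$, and the formula $\alpha(0)\lor(i\land\alpha(1))$ reconstructs such a map, because a monotone map out of $[1]$ is determined by its two endpoints. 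This is the standard fact, also used by Riehl--Shulman and Sterling--Ye, that $\Delta^1$ satisfies Phoa/Kock--Lawvere in simplicial spaces. Stability under base change is automatic because these are sheaf-level statements, and if needed I would simply cite Sterling and Ye's verification for this model.

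Once both hypotheses are in place, I argue by contradiction. Suppose that, in this model, based Segal completeness and Segal completeness coincide, meaning the two subuniverses agree. Then, read internally, Segal completeness implies based Segal completeness. The preceding corollary now yields a contradiction, ultimately via Corollary~\ref{cor:conjecture-implies-quotient-initial}: it forces $\II\cong\Two$, which is refuted externally because $\Delta^1$ is not the coproduct $\Delta^0+\Delta^0$. One subtlety is that the paper's statement "Segal completeness implies based Segal completeness" is internal and universally quantified over types. So I will check that external coincidence of the two reflective subuniverses, which is a property stable under slicing in an $\infty$-topos since both are defined by internal orthogonality, gives the internal implication in every context. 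That is exactly what the corollary needs.
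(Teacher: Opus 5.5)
Your main line is the paper's: the corollary is the internal impossibility result (strictness, Phoa's principle, and ``Segal implies based Segal'' are jointly contradictory) instantiated at $\II=\Delta^1$, once strictness and Phoa's principle are checked there. Your check of Phoa's principle is fine. Strictly speaking the work is done by full faithfulness of the nerve rather than Yoneda, since $\Delta^n\times\Delta^1$ is not representable. Refuting quotient-initiality directly is a harmless shortcut past the paper's appeal to Sterling--Ye: the identity $1$-simplex of $\Delta^1$ is neither constantly $0$ nor constantly $1$.

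The gap is your final step. You claim that agreement of the two classes of \emph{global} local objects is stable under slicing, because both are defined by internal orthogonality. That principle is false, and it fails in this very model. Internal orthogonality makes each class pullback-stable, but it does not carry an inclusion between the classes from the empty context to other contexts.

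Here is why. For a closed simplicial space $C$, based Segal completeness means $C$ is local for $\Delta^n\times_{\Delta^1}\Lambda^2_1\hookrightarrow\Delta^n\times_{\Delta^1}\Delta^2$ for every $\Delta^n\to\Delta^1$. Suppose that map sends exactly the first $k$ vertices to $0$, with $1\le k\le n$. Then the inclusion is $(\Delta^{k-1}\star(\Delta^{n-k}\times\{0\}))\cup(\Delta^{n-k}\times\Delta^1)\hookrightarrow\Delta^{k-1}\star(\Delta^{n-k}\times\Delta^1)$. This is inner anodyne by Joyal's join lemma (Lurie, \emph{Higher Topos Theory}, Lemma~2.1.2.3), because $\Delta^{n-k}\times\{0\}\hookrightarrow\Delta^{n-k}\times\Delta^1$ is left anodyne. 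For $k=0$ or $k=n+1$ it is an isomorphism. Internal Segal completeness makes $C$ local for all inner anodyne maps, since these are generated by pushout-products of boundary inclusions with $\Lambda^2_1\hookrightarrow\Delta^2$. So every globally Segal complete simplicial space is already based Segal complete. The failure appears only in non-empty contexts, for instance for the family $j\bcolon\II\vdash\IsT{j}_\bot$ over $\Delta^1$, which is exactly what Corollary~\ref{cor:conjecture-implies-quotient-initial} instantiates.

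So under your reading, coincidence of the classes of global objects, the statement would be false, and no argument can supply the step you describe. The corollary must instead be read as saying that the internal proposition ``every Segal complete type is based Segal complete'' is not valid in the model. Equivalently, the two subuniverses are not equal as subobjects of the universe. With that reading your first two paragraphs already finish the proof, and the slicing step should be deleted. Since $[0]$ is terminal in $\Delta$ and maps into every $[n]$, this $\infty$-topos has only two subterminals, so internal refutation and external non-validity of this closed proposition coincide.
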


\begin{corollary}
  For the generic flat strict bounded distributive lattice in the $\infty$-topos of Dedekind cubical spaces, it is not the case that based Segal completeness and Segal completeness coincide.
\end{corollary}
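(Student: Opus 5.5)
The plan is to obtain this as an instance of the preceding corollary, which says that strictness of $\J$, Phoa's principle, and the implication from Segal completeness to based Segal completeness cannot all hold together. I take the ambient universe to be the one interpreted in the $\infty$-topos of Dedekind cubical spaces, and $\J$ to be the generic flat strict bounded distributive lattice it contains. It then suffices to check, internally, the two hypotheses that are not under dispute.

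\emph{Strictness.} I will argue that $\J$ is strict internally. This should be essentially by construction: the lattice is chosen to be the generic \emph{strict} one, so the geometric sequent $(0 = 1) \vdash \bot$ holds in it. This sequent is exactly the statement that $\IsT{-}$ preserves the empty join. Since the sequent is geometric, its truth in the 1-topos of cubical sets transfers to the $\infty$-topos of cubical spaces, whose $0$-truncated part is that 1-topos.

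\emph{Phoa's principle.} I will verify the third form of the definition, namely $\alpha(i) = \alpha(0)\lor(i\land\alpha(1))$. Equivalently, the polynomial $\J$-algebra $\J[\mathsf{x}]$ is quasi-coherent. This is the substantive step, and I expect it to be the main obstacle. I would first try the synthetic quasi-coherence result of Sterling and Ye~\cite{sterling2025domainsclassifyingtopoi} for generic models: in the classifying topos of flat models, every finitely presented algebra over the generic model is quasi-coherent. The argument is a Yoneda computation. Maps $\II\to\II$ at a stage $\J$-algebra $A$ correspond to $\J$-algebra maps out of $\J[\mathsf{x}]$, and these are exactly the lattice polynomials in one variable over $A$. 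Each such polynomial can be written in the normal form $a\lor(\mathsf{x}\land b)$ with $a\leq b$. Flatness is what makes the cube $\II$ representable by the free algebra, so that this computation is legitimate. Strictness only adds the relation $0\neq 1$, which does not disturb the normal form.

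\emph{Conclusion.} With both hypotheses in hand, the preceding corollary gives, internally, the negation of the proposition ``every Segal complete type is based Segal complete.'' Reading this externally, the implication fails in the model, provided the model is non-degenerate. Non-degeneracy holds because the topos of Dedekind cubical spaces is non-trivial: it has a point, for instance the two-element lattice $\mathbb{B}$, which is flat and strict. So the internal negation really does refute the implication, rather than holding vacuously.
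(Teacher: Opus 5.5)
Your proposal is correct and follows the paper's implicit argument. The paper obtains this corollary by instantiating the preceding one (strictness, Phoa's principle, and ``Segal implies based Segal'' are jointly contradictory), with strictness holding by construction and Phoa's principle holding for the generic flat lattice. Your Yoneda computation of the self-exponential of the representable interval and your non-degeneracy check via the point $\mathbb{B}$ just make explicit what the paper leaves tacit; only the one-variable free algebra is needed, so you need not rely on quasi-coherence of all finitely presented algebras.
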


\begin{corollary}
  For the generic bounded distributive lattice in its classifying $\infty$-topos, it is not the case that based Segal completeness and Segal completeness coincide.
\end{corollary}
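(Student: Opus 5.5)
The plan is to deduce the claim from Corollary~\ref{cor:conjecture-implies-quotient-initial}, applied internally to the classifying $\infty$-topos $\mathcal{E}$ of bounded distributive lattices with $\J$ the generic lattice. That corollary needs no strictness hypothesis, which the generic lattice lacks. Suppose, for contradiction, that Segal completeness implies based Segal completeness is valid in $\mathcal{E}$. If Phoa's principle also holds for the generic $\J$, then Corollary~\ref{cor:conjecture-implies-quotient-initial} makes the proposition $\forall i\bcolon\II\pdot \IsF{i}\lor\IsT{i}$ valid in $\mathcal{E}$. So two things remain to check: Phoa's principle holds, and quotient-initiality fails.

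For Phoa's principle I would use the standard synthetic quasi-coherence property of generic models, as in Sterling and Ye~\cite{sterling2025domainsclassifyingtopoi}. In the classifying topos of an algebraic theory, the generic model makes every finitely presented $\J$-algebra quasi-coherent. Concretely, internal maps $\II\to\II$ correspond, by Yoneda, to elements of the free lattice on one generator over the current stage. Hence $\J[\mathsf{x}]$ is quasi-coherent, which is the first formulation of Phoa's principle. I would cite this rather than reprove it.

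For the failure of quotient-initiality, the theory of bounded distributive lattices is algebraic. So (on $0$-truncated objects, which is all that matters since $\II$ is a set) the relevant topos is the presheaf topos on finitely presented lattices, with $\II$ represented by the free lattice $\mathbb{B}[\mathsf{x}]$ on one generator. Validity of $\forall i\pdot (i=0)\lor(i=1)$ would in particular hold at the generic element $\mathsf{x}$ at stage $\mathbb{B}[\mathsf{x}]$. In a presheaf topos disjunction is computed stagewise with no nontrivial covers, so this would force $\mathsf{x}=0$ or $\mathsf{x}=1$ in $\mathbb{B}[\mathsf{x}]$. That is false, since $\mathbb{B}[\mathsf{x}]=\{0<\mathsf{x}<1\}$. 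This yields the contradiction.

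The main obstacle is the passage between the internal statement and its external reading. One must make sure the hypothesis ``Segal implies based Segal'' is taken as an internal universally quantified statement, so that Corollary~\ref{cor:conjecture-implies-quotient-initial} can be applied internally. One must also make sure that the Kripke--Joyal evaluation of the resulting disjunction in the $\infty$-topos agrees with the stagewise one for the set $\II$. I would handle the second point by noting that propositions in $\mathcal{E}$ are computed in its $0$-truncated part, which is the ordinary presheaf topos.
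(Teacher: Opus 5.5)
Your proposal is correct and follows the paper's route: apply Corollary~\ref{cor:conjecture-implies-quotient-initial} internally, which needs only Phoa's principle and no strictness, with Phoa's principle supplied by synthetic quasi-coherence of the generic model, and then refute quotient-initiality of the generic lattice. The only difference is cosmetic: the paper argues abstractly that quotient-initiality is geometric and so would transfer to every bounded distributive lattice, whereas you evaluate it at the generic element $\mathsf{x}$ at stage $\mathbb{B}[\mathsf{x}]$, which is the same argument made concrete via the model $\{0<\mathsf{x}<1\}$.
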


For the last corollary, we need a further auxiliary lemma because the generic bounded distributive lattice is not strict.

\begin{lemma}
  It is not the case that the generic bounded distributive lattice in its classifying $\infty$-topos is quotient-initial.
\end{lemma}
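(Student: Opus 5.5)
To show the generic bounded distributive lattice $\J$ in its classifying $\infty$-topos is not quotient-initial, I will argue by contradiction. I assume internally that $\forall i\bcolon\II\pdot (i=0)\lor(i=1)$ and evaluate this statement at the generic stage, with Kripke--Joyal semantics in the presheaf $\infty$-topos on finitely presented bounded distributive lattices. The interval is represented by the free algebra $\mathbb{B}[\mathsf{x}]$. Concretely, $\II(A)=\mathsf{U}A$ for a finitely presented lattice $A$, which is why $\mathbb{B}[\mathsf{x}]$ represents $\II$. Because the statement is a proposition, it is 0-truncated, so it can be checked in the underlying 1-topos of sheaves (or presheaves), where the calculation is classical.

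The key steps are as follows.

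\begin{enumerate}
  \item I instantiate the assumption at the generic element $\mathsf{x}\in\mathsf{U}\,\mathbb{B}[\mathsf{x}]$.
  \item Forcing a disjunction in the classifying topos gives a covering family of $\mathbb{B}[\mathsf{x}]$ (a jointly conservative family of maps out of $\mathbb{B}[\mathsf{x}]$ in the chosen topology) such that on each member either $\mathsf{x}\mapsto 0$ or $\mathsf{x}\mapsto 1$.
  \item In the presheaf topos, the trivial topology is the natural choice for the theory of bounded distributive lattices with no extra axioms. Then the only cover is the identity, so I must have $\mathsf{x}=0$ or $\mathsf{x}=1$ already in $\mathbb{B}[\mathsf{x}]$.
  \item This is false, since the homomorphism $\mathbb{B}[\mathsf{x}]\to\mathbb{B}$ sending $\mathsf{x}\mapsto 1$ (respectively $\mathsf{x}\mapsto 0$) separates $\mathsf{x}$ from $0$ (respectively from $1$).
\end{enumerate}

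An equivalent, more elementary route avoids forcing language. Internal quotient-initiality says that $\II\to\Omega$, $i\mapsto (i=0)\lor(i=1)$, is constantly true. By Yoneda, this makes the sieve on $\mathbb{B}[\mathsf{x}]$ generated by the two quotients $\mathbb{B}[\mathsf{x}]/(\mathsf{x}=0)$ and $\mathbb{B}[\mathsf{x}]/(\mathsf{x}=1)$ the maximal sieve. But the identity map does not factor through either quotient, again because of the separating homomorphisms to $\mathbb{B}$.

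The main obstacle is pinning down the correct notion of cover, that is, confirming that the classifying topos of bounded distributive lattices has the trivial topology on finitely presented algebras, so that disjunction at a representable stage forces a genuine split. Since the theory is algebraic, its classifying topos is the presheaf topos on finitely presented models, so this holds. The passage from the $\infty$-topos to its 1-truncation for the proposition in question then only needs a remark that propositions are computed in the underlying 1-topos.
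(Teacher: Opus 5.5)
Your argument is correct, but it takes a more hands-on route than the paper. The paper's proof is a one-line appeal to geometricity. Quotient-initiality is the geometric sequent $\top\vdash_i (i=0)\lor(i=1)$. So if it were valid for the generic lattice, it would be carried along the inverse image of every classifying geometric morphism and would hold for every bounded distributive lattice. In particular it would hold for any set-model with a third element, which is absurd.

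You instead compute inside the explicit presheaf model. You use the fact that an algebraic theory has a classifying topos with the trivial topology, so a disjunction forced at the representable stage $\mathbb{B}[\mathsf{x}]$ must already split there.

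The two arguments are closer than they look. Evaluation at a finitely presented stage $A$ is exactly the inverse image of the point classifying $A$. So your computation is the paper's argument instantiated at the particular set-model $\mathbb{B}[\mathsf{x}]\cong\{0<\mathsf{x}<1\}$, with your two homomorphisms to $\mathbb{B}$ witnessing that $\mathsf{x}$ is neither $0$ nor $1$.

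Your version buys self-containedness and an explicit witness. The cost is that you must identify the site and comment on truncation levels in the $\infty$-topos. The paper's version is indifferent to the presentation. It would go through unchanged for geometric extensions of the theory (strictness, linearity, \emph{etc.}) whose classifying topos has a non-trivial topology, provided some set-model fails to be quotient-initial.

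Both arguments establish the external statement (non-validity) rather than the internal negation, which is as it should be. The internal negation actually fails: at the stage of the one-element lattice, the interval is a point.
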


\begin{proof}
  Quotient-initiality is geometric, so if the generic bounded distributive lattice were quotient-initial then \emph{every} bounded distributive lattice would be quotient-initial.
\end{proof}

\subsection{Sierpi\'nski = little-Sierpi\'nski completeness}

We will now show that Sierpi\'nski and little-Sierpi\'nski completeness are equivalent, solving a problem left behind by Pugh and Sterling---who managed it only within the subuniverse of $0$-truncated types. We warm up with a lemma involving certain ``little'' mapping cylinders over the cloven open-partial map classifier.

\begin{lemma}
  Any little-Sierpi\'nski complete type is orthogonal to the following ``little'' open display cylinder comparison map
  \[
    \sigma_{u\pdot\IsT{\pi u}} \colon
    \OpenMCyl_{(u\bcolon\Lift(X))}\IsT{\pi u}
    \hookrightarrow
    \Lift_{(u\bcolon\Lift(X))}\IsT{\pi u}
  \]
  or equivalently the following open mapping cylinder comparison map
  \[
    \sigma_{\eta_X}\colon
    \OpenMCyl_{\Lift(X)}(\eta_X)
    \hookrightarrow
    \Lift_{\Lift(X)}(\eta_X)
  \]
  of the open embedding $\eta_X\colon X\hookrightarrow\Lift(X)$.
\end{lemma}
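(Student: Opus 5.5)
The plan is to reduce the statement to the fibrewise orthogonality that little-Sierpi\'nski completeness provides, using the fact that orthogonality is stable under dependent sums. First, the two formulations are equivalent. By construction, $\eta_X\colon X\hookrightarrow\Lift(X)$ is the cloven open embedding obtained by pulling back $\{1\}\hookrightarrow\II$ along the projection $\pi\colon\Lift(X)\to\II$. So for each $u\bcolon\Lift(X)$ we have a canonical equivalence $\mathsf{fib}_{\eta_X}(u)\simeq\IsT{\pi u}$. Concretely, writing $u=(i,f)$ with $f\colon X^{\IsT{i}}$, the fibre is $\sum_{(x\bcolon X)}(1,\mathsf{const}(x))=(i,f)$, which contracts to $\IsT{i}$. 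Each such proposition is $\IsT{0}$-connected, as noted in \S~\ref{sec:sierpinski-comparison-map}, so the comparison maps in question exist. Since $\OpenMCyl$, $\Lift_{(-)}$ and the comparison map of Definition~\ref{def:mapping-cylinder-comparison} are all defined fibrewise, univalence lets us transport along this equivalence of families over $\Lift(X)$. Hence $\sigma_{\eta_X}$ and $\sigma_{u\pdot\IsT{\pi u}}$ agree up to equivalences of domain and codomain commuting with the maps, and orthogonality is invariant under such equivalences.

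It therefore suffices to treat $\sigma_{u\pdot\IsT{\pi u}}=\sum_{(u\bcolon\Lift(X))}\sigma_{\IsT{\pi u}}$. Let $C$ be little-Sierpi\'nski complete. By the universal property of $\Sigma$-types (currying), restriction along this sum map factors through canonical equivalences as
\[
  C^{\sum_{u}\Lift\IsT{\pi u}}\simeq\textstyle\prod_{(u\bcolon\Lift(X))}C^{\Lift\IsT{\pi u}}
  \xrightarrow{\ \prod_u C^{\sigma_{\IsT{\pi u}}}\ }
  \textstyle\prod_{(u\bcolon\Lift(X))}C^{\IsT{\pi u}_\bot}\simeq C^{\sum_u\IsT{\pi u}_\bot}\text{.}
\]
Checking that the square commutes is a definitional unfolding. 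For each $u$, the map $\sigma_{\IsT{\pi u}}$ is exactly the little comparison map at $i\colondefeq\pi u$, so it belongs to $\mathcal{L}_{\mathit{Sierp}}^{\mathit{little}}$. Orthogonality of $C$ makes each $C^{\sigma_{\IsT{\pi u}}}$ an equivalence. A dependent product of fibrewise equivalences is an equivalence, by function extensionality, so the middle map is an equivalence and hence so is the composite.

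I expect no step to be a serious obstacle. The only point needing care is the first one: identifying $\mathsf{fib}_{\eta_X}(u)$ with $\IsT{\pi u}$ coherently enough that the comparison map $\sigma_{\eta_X}$ is carried to $\sum_u\sigma_{\IsT{\pi u}}$, rather than to some other map between the same types. I would handle this by stating once that the comparison map $\sigma_{x\pdot E[x]}$ is functorial in equivalences of families $E\simeq E'$ over a fixed base. This follows by path induction after applying univalence fibrewise, which avoids any explicit coherence computation.
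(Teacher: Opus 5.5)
Your proposal is correct and takes essentially the same route as the paper. You identify $\mathsf{fib}_{\eta_X}(u)$ with $\IsT{\pi u}$ so that the two comparison maps coincide, and then recognise the comparison map as the sum over $u\bcolon\Lift(X)$ of the little comparison maps $\sigma_{\IsT{\pi u}}$, each left-orthogonal to $C$. The one thing you add is an explicit proof, by currying and a fibrewise product of equivalences, that orthogonality is stable under $\Sigma$; the paper leaves that step implicit.
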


(We note that $\eta_X\colon X\hookrightarrow \Lift(X)$ is always $\IsT{0}$-connected, because its fibres are propositions of the form $\IsT{i}$.)

\begin{proof}
  The family $u\bcolon\Lift(X)\vdash \IsT{\pi u}$ is equivalently the fibres of the open embedding $\eta_X\colon X\hookrightarrow\Lift(X)$, so the two comparison maps are the same. Unravelling Definition~\ref{def:mapping-cylinder-comparison}, the comparison map is the sum over $u\bcolon\Lift(X)$ of the little Sierpi\'nski comparison maps $\IsT{\pi u}_\bot\hookrightarrow \Lift\IsT{\pi u}$, which are left-orthogonal to any little-Sierpi\'nski complete type.
\end{proof}

\begin{construction}\label{con:eval-square}
  We shall construct the following square whose upper and lower edges are to be thought of as ``evaluation'' maps:
  \[
    \begin{tikzcd}
      \OpenMCyl_{(u\bcolon\Lift(X))}\IsT{\pi u}
        \ar[r,densely dashed, "\bar\epsilon_X"]
        \ar[d,hookrightarrow, "\sigma_{u\pdot\IsT{\pi u}}"']
      &
      X_\bot
        \ar[d, "\sigma_X"]
      \\
      \Lift_{(u\bcolon\Lift(X))}\IsT{\pi u}
        \ar[r, densely dashed, "\epsilon_X"']
      &
      \Lift(X)
    \end{tikzcd}
  \]

  We may use the universal property of the sum to define the two maps fibrewise over $(i,x)\bcolon\Lift(X)$ as follows:
  \[
    \begin{tikzcd}[row sep=large]
      |[elbow nw]|\IsT{i}_\bot
        \ar[d,hookrightarrow, "\sigma_{\IsT{i}}"']
        \ar[r,hookrightarrow]
        \ar[rr, curve={height=-24pt}, "x_\bot"]
      & \OpenMCyl_{(u\bcolon\Lift(X))}\IsT{\pi u}
        \ar[r,densely dashed, "\bar\epsilon_X"description]
        \ar[d,hookrightarrow, "\sigma_{u\pdot\IsT{\pi u}}"description]
      &
      X_\bot
        \ar[d, "\sigma_X"]
      \\
      \Lift\IsT{i}
        \ar[r,hookrightarrow]
        \ar[rr, curve={height=24pt}, "\Lift(x)"']
      & \Lift_{(u\bcolon\Lift(X))}\IsT{\pi u}
        \ar[r, densely dashed, "\epsilon_X"description]
      &
      \Lift(X)
    \end{tikzcd}
  \]

  The outer squares above are evidently the naturality squares for the comparison transformation $\sigma\colon (-)_\bot \to \Lift$ at each $x\colon \IsT{i}\to X$.
\end{construction}

\begin{construction}[Generic points]
  For each $i\bcolon\II$, the open-partial map classifier $\Lift\IsT{i}$ has a ``generic'' element $\mathsf{gen}_i\bcolon \Lift\IsT{i}$,
  \[
    \mathsf{gen}_i \colondefeq (i, \lambda p\bcolon\IsT{i}\pdot p),
  \]
  lying in the image of $i\bcolon \II/i$ under the inclusion $\II/i\hookrightarrow \Lift\IsT{i}$.
\end{construction}

\begin{construction}\label{con:section-of-eval-map}
  Using the generic points of each $\Lift\IsT{\pi u}$, we may exhibit a section $\delta_X\colon \Lift(X)\to  \Lift_{(u\bcolon\Lift(X))}\IsT{\pi u}$ to the evaluation map $\epsilon_X\colon \Lift_{(u\bcolon\Lift(X))}\IsT{\pi u}\to \Lift(X)$, setting
  $
    \delta_X(u) \colondefeq
    (u, \mathsf{gen}_{\pi u})
  $,
  so that we have a section-retraction diagram
  \[
    \begin{tikzcd}
      \Lift(X)
        \ar[r, "\delta_X"]
        \ar[rr,curve={height=18pt},equals,nfold]
      & \Lift_{(u\bcolon\Lift(X))}\IsT{\pi u}
        \ar[r,"\epsilon_X"]
      & \Lift(X)\text{.}
    \end{tikzcd}
  \]

  Note that the other evaluation map $\bar\epsilon\colon \OpenMCyl_{(u\bcolon\Lift(X))}\IsT{\pi u}\to X_\bot$ does \emph{not} have an analogous section.
\end{construction}

The following result is due to Pugh and Sterling, but we provide a new and more streamlined proof.

\begin{theorem}\label{thm:sierpinski-restriction-has-retraction}
  If $C$ is little-Sierpi\'nski complete, then each Sierpi\'nski restriction map
  $
    C^{\sigma_X}\colon C^{\Lift(X)} \to C^{X_\bot}
  $
  has a retraction.
\end{theorem}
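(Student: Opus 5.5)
The plan is to build the retraction from the square of Construction~\ref{con:eval-square} together with the section $\delta_X$ of Construction~\ref{con:section-of-eval-map}. Given $g\colon X_\bot\to C$, I will produce an extension to $\Lift(X)$ as follows. First precompose with the upper evaluation map to get $g\circ\bar\epsilon_X\colon \OpenMCyl_{(u\bcolon\Lift(X))}\IsT{\pi u}\to C$. By the preceding lemma, $C$ is orthogonal to $\sigma_{u\pdot\IsT{\pi u}}$, so this map extends uniquely to some $\widetilde{g}\colon \Lift_{(u\bcolon\Lift(X))}\IsT{\pi u}\to C$ with $\widetilde{g}\circ\sigma_{u\pdot\IsT{\pi u}} = g\circ\bar\epsilon_X$. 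Then I set
\[
  r(g) \colondefeq \widetilde{g}\circ\delta_X\colon \Lift(X)\to C\text{.}
\]
So $r$ is the composite $C^{X_\bot}\xrightarrow{C^{\bar\epsilon_X}} C^{\OpenMCyl}\xrightarrow{(C^{\sigma})^{-1}} C^{\Lift_{(u\bcolon\Lift(X))}\IsT{\pi u}}\xrightarrow{C^{\delta_X}} C^{\Lift(X)}$.

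Note that $r$ is a retraction of $C^{\sigma_X}$ in the sense $r\circ C^{\sigma_X}=\mathrm{id}$, not a section. So I must show that for $h\colon\Lift(X)\to C$ we have $r(h\circ\sigma_X)=h$. Take $g=h\circ\sigma_X$. Commutativity of the square in Construction~\ref{con:eval-square} gives $g\circ\bar\epsilon_X = h\circ\sigma_X\circ\bar\epsilon_X = h\circ\epsilon_X\circ\sigma_{u\pdot\IsT{\pi u}}$. Hence $h\circ\epsilon_X$ is an extension of $g\circ\bar\epsilon_X$ along $\sigma_{u\pdot\IsT{\pi u}}$, and by uniqueness $\widetilde g = h\circ\epsilon_X$. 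Therefore $r(g)=h\circ\epsilon_X\circ\delta_X = h$, using $\epsilon_X\circ\delta_X=\mathrm{id}$. All of these identifications are homotopies produced coherently by the equivalence $(C^{\sigma})^{-1}$, so $r\circ C^{\sigma_X}$ is homotopic to the identity by function extensionality.

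The main obstacle is checking that the square of Construction~\ref{con:eval-square} actually commutes, as a homotopy, rather than merely being drawn. Fibrewise over $(i,x)\bcolon\Lift(X)$ this reduces to the naturality of $\sigma$ at $x\colon\IsT{i}\to X$, that is, $\sigma_X\circ x_\bot = \Lift(x)\circ\sigma_{\IsT{i}}$. I would prove this using the partial-map-classifier description of $\sigma$. Both composites classify the same cloven open-partial map $\IsT{i}_\bot\to X$, namely the one defined on the open $\IsT{i}\hookrightarrow\IsT{i}_\bot$ by $x$. This holds because both $\iota_X$ and $\eta_X$ are cloven by first projections, and $x_\bot$ preserves first projection to $\II$. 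Uniqueness of classifying maps then gives the homotopy.

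A secondary point is that $\epsilon_X\circ\delta_X=\mathrm{id}$. Unfolding the definitions, this says $\Lift(x)(\mathsf{gen}_i)=(i,x)$, which follows from $\Lift(x)(i,\lambda p.p)=(i,\lambda p.x(p))$ by $\eta$-expansion.
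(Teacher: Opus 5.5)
Your proposal is correct and follows the paper's proof exactly: the retraction is $C^{\delta_X}\circ(C^{\sigma_{u\pdot\IsT{\pi u}}})^{-1}\circ C^{\bar\epsilon_X}$, and the retraction identity follows from the commuting evaluation square together with $\epsilon_X\circ\delta_X=\mathrm{id}$. Your extra checks, namely that the square commutes (by naturality of $\sigma$ at each $x\colon\IsT{i}\to X$) and that $\Lift(x)(\mathsf{gen}_i)=(i,x)$, are sound and spell out details the paper treats as evident.
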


\begin{proof}
  We first compose Constructions~\ref{con:eval-square} and~\ref{con:section-of-eval-map}:
  \[
    \begin{tikzcd}[row sep=large]
      & \OpenMCyl_{(u\bcolon\Lift(X))}\IsT{\pi u}
        \ar[r, "\bar\epsilon_X"]
        \ar[d, hookrightarrow, "\sigma_{u\pdot\IsT{\pi u}}"description]
      & X_\bot
        \ar[d, "\sigma_X"]
      \\
      \Lift(X)
        \ar[r,"\delta_X"]
        \ar[rr,curve={height=18pt}, equals, nfold]
      & \Lift_{(u\bcolon\Lift(X))}\IsT{\pi u}
        \ar[r, "\epsilon_X"]
      & \Lift(X)
    \end{tikzcd}
  \]

  Dualising the above with $C$ we have the following:
  \[
    \begin{tikzcd}[row sep=large]
      C^{\Lift(X)}
        \ar[r, "C^{\epsilon_X}"']
        \ar[rr, curve={height=-18pt},equals,nfold]
        \ar[d, "C^{\sigma_X}"']
      &
      C^{\Lift_{(u\bcolon\Lift(X))}\IsT{\pi u}}
        \ar[r, "C^{\delta_X}"']
        \ar[d, "C^{\sigma_{u\pdot\IsT{\pi u}}}"', "\cong"]
      &
      C^{\Lift(X)}
      \\
      C^{X_\bot}
        \ar[r, "C^{\bar\epsilon_X}"']
      & C^{\OpenMCyl_{(u\bcolon\Lift(X))}\IsT{\pi u}}
    \end{tikzcd}
  \]

  We now define a map $H \colon C^{X_\bot}\to C^{\Lift(X)}$ using the assumed inverse of \[ C^{\sigma_{u\pdot\IsT{\pi u}}}\colon C^{\Lift_{(u\bcolon\Lift(X))}\IsT{\pi u}} \to C^{\OpenMCyl_{(u\bcolon\Lift(X))}\IsT{\pi u}}\] as follows:
  \[
    \begin{tikzcd}[row sep=large]
      |[gray]|C^{\Lift(X)}
        \ar[r, gray, "C^{\epsilon_X}"']
        \ar[rr, gray, curve={height=-18pt},equals,nfold]
        \ar[d, gray, "C^{\sigma_X}"']
      &
      C^{\Lift_{(u\bcolon\Lift(X))}\IsT{\pi u}}
        \ar[r, "C^{\delta_X}"']
      &
      C^{\Lift(X)}
      \\
      C^{X_\bot}
        \ar[r, "C^{\bar\epsilon_X}"']
      & C^{\OpenMCyl_{(u\bcolon\Lift(X))}\IsT{\pi u}}
        \ar[u, "(C^{\sigma_{u\pdot\IsT{\pi u}}})^{-1}"description]
    \end{tikzcd}
  \]

  That $H$ is a retraction of $C^{\sigma_X}$ follows immediately from the outer diagram commuting.
\end{proof}

It is considerably more difficult to show that $C^{\sigma_X}\colon C^{\Lift(X)}\to C^{X_\bot}$ has a \emph{section}, and it was here that Pugh and Sterling~\cite{pugh-sterling-2025} resorted to the unpleasant assumption that $C$ is $0$-truncated. We have now succeeded to prove the result unconditionally.

\begin{definition}
  Let $X$ and $C$ be types. A \emph{Sierpi\'nski cone datum} for $X$ in $C$ is given by the following data:
    \begin{align*}
      f^\bot\colon C,\quad
      f^\gamma\colon \II\times X\to C,\quad
      f^H\colon \textstyle\prod_{(x\bcolon X)} f^\bot = f^\gamma(0,x)
    \end{align*}

  We will write $\mathsf{SCD}_X(C)$ for the type of Sierpi\'nski cone data for $X$ in $C$. The evident evaluation map $C^{X_\bot}\to \mathsf{SCD}_X(C)$ is an equivalence by definition.
\end{definition}

\begin{definition}
  A \emph{homotopy} $\phi\colon f\approx_{\mathsf{SCD}_X(C)} g$ of Sierpi\'nski cone data
  is given by the following data:
  \begin{align*}
    \phi^\bot &\colon f^\bot = g^\bot \\
    \phi^\gamma&\colon \textstyle\prod_{(i\bcolon\II,x\bcolon X)} f^\gamma(i,x) = g^\gamma(i,x)\\
    \phi^H&\colon \textstyle\prod_{(x\bcolon X)}
    \left\{
    \begin{tikzcd}[ampersand replacement=\&, cramped]
      f^\bot
        \ar[r, Rightarrow, nfold, "{f^H(x)}"]
        \ar[d, Rightarrow, nfold, "\phi^\bot"']
      \& f^\gamma(0,x)
        \ar[d, Rightarrow, nfold, "{\phi^\gamma(0,x)}"]
      \\
      g^\bot
        \ar[r, Rightarrow, nfold, "{g^H(x)}"']
      \& g^\gamma(0,x)
    \end{tikzcd}
    \right\}
  \end{align*}

  The evident map $f=g \to f\approx_{\mathsf{SCD}_X(C)}g$ is an equivalence by function extensionality.
\end{definition}

\begin{construction}\label{con:scd-restriction}
  Let $\mathbf{X} \equiv (X,\chi)$ be a type equipped with an explicit witness $\chi\colon \mathsf{isContr}\bigl(X^{\IsT{0}}\bigr)$
  that it is $\IsT{0}$-connected.
  Corresponding to the restriction map $C^{\sigma_X}$, we can restrict a function $f\colon \Lift(X)\to C$ to a Sierpi\'nski cone datum $\sigma_{\mathbf{X}}^*f\bcolon \mathsf{SCD}_X(C)$:
  \begin{align*}
    &(\sigma_{\mathbf{X}}^*f)^\bot \colondefeq f(0, \chi.1)\\
    &(\sigma_{\mathbf{X}}^*f)^\gamma(i,x) \colondefeq f(i, \mathsf{const}(x))\\
    &(\sigma_{\mathbf{X}}^*f)^H(x) \colondefeq \mathsf{ap}_{f(0,{-})}~(\chi.2~(\mathsf{const}(x)))
  \end{align*}
\end{construction}

\begin{definition}
  Let $\mathbf{X} \equiv (X,\chi)$ be an explicitly $\IsT{0}$-connected type as in Construction~\ref{con:scd-restriction}. A \emph{Sierpi\'nski extension} of a given $f\bcolon \mathsf{SCD}_X(C)$ is specified by the following data:
  \[
    \hat{f}\colon \Lift(X)\to C,\quad
    \phi_{\hat{f}} \colon \sigma_{\mathbf{X}}^*\hat{f} \approx_{\mathsf{SCD}_X(C)} f
  \]

  We write $\mathsf{SCE}_{\mathbf{X}}(f)$ for the type of Sierpi\'nski extensions of $f$ with respect to the chosen $\IsT{0}$-connectedness structure.
\end{definition}

\begin{definition}
  We say that $C$ has unique Sierpi\'nski extensions if for all such $\mathbf{X}$ and $f\bcolon \mathsf{SCD}_X(C)$, the type $\mathsf{SCE}_{\mathbf{X}}(f)$ is contractible, and unique \emph{little} Sierpi\'nski extensions if for all $i\bcolon \II$, $f\bcolon \mathsf{SCD}_{\IsT{i}}(C)$, the type $\mathsf{SCE}_{\IsT{i}}(f)$ is contractible, where we have equipped $\IsT{i}$ with its canonical $\IsT{0}$-connectedness structure.
\end{definition}

\begin{observation}\label{obs:sierp-ext}
  Let $\mathbf{X} \equiv (X,\chi)$ be an explicitly $\IsT{0}$-connected type. Then the type of sections to the restriction map $C^{\sigma_X}\colon C^{\Lift(X)}\to C^{X_\bot}$ is canonically equivalent to the type
  $
    \textstyle\prod_{(f\bcolon\mathsf{SCD}_X(C))}
    \mathsf{SCE}_{\mathbf{X}}(f)
  $
  of assignments of Sierpi\'nski extensions to all Sierpi\'nski cone data.
  Moreover, $C$ is (little-)Sierpi\'nski complete if and only if $C$ has unique (little) Sierpi\'nski extensions.
\end{observation}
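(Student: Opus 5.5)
The plan is to derive both claims of Observation~\ref{obs:sierp-ext} from the standard description of sections in terms of fibres, combined with the equivalence $C^{X_\bot}\simeq \mathsf{SCD}_X(C)$ and the characterisation of paths in $\mathsf{SCD}_X(C)$ by homotopies.

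\textbf{Sections.} A section of any map $g\colon A\to B$ is the same as a dependent function $\prod_{(b\bcolon B)}\mathsf{fib}_g(b)$. I apply this with $g = C^{\sigma_X}$ and proceed in three steps.
\begin{enumerate}
\item Transport along the evaluation equivalence $C^{X_\bot}\simeq\mathsf{SCD}_X(C)$, so that the codomain $C^{X_\bot}$ becomes $\mathsf{SCD}_X(C)$.
\item Check that the composite of $C^{\sigma_X}$ with this equivalence is precisely $\sigma_{\mathbf{X}}^*$ of Construction~\ref{con:scd-restriction}.
\item Replace the identity type in each fibre by homotopies of Sierpi\'nski cone data, using function extensionality. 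This turns $\mathsf{fib}_{\sigma^*_{\mathbf{X}}}(f)$ into $\sum_{\hat f}\sigma^*_{\mathbf{X}}\hat f\approx f$, which is exactly $\mathsf{SCE}_{\mathbf{X}}(f)$.
\end{enumerate}

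\textbf{The main obstacle.} Step~2 is the one step that requires care. We must check that evaluating $f\circ\sigma_X$ at the point constructor, the interval constructor and the path constructor of the pushout $X_\bot$ yields $f(0,\chi.1)$, $f(i,\mathsf{const}(x))$, and the path $\mathsf{ap}_{f(0,-)}(\chi.2(\mathsf{const}\,x))$.

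This is where the chosen witness $\chi$ enters. Recall that $\sigma_X$ was defined from $\mathsf{undef}_X$ and $\mathsf{glue}_X$, which are built using the contractibility of $X^{\IsT{0}}$.
\begin{itemize}
\item $\mathsf{undef}_X$ is $(0,\chi.1)$.
\item $\mathsf{glue}_X(i,x)$ is $(i,\mathsf{const}\,x)$.
\item The gluing homotopy $\mathsf{undef}_X=\mathsf{glue}_X(0,x)$ is the path in $\sum_{i}X^{\IsT i}$ lying over $\mathsf{refl}_0$, given by $\chi.2(\mathsf{const}\,x)$.
\end{itemize}
Once $\sigma_X$ is defined concretely in this way, the computation rules for the pushout recursor produce the stated data. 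The path component follows because $\mathsf{ap}_{f\circ\sigma_X}$ on the glue constructor reduces to $\mathsf{ap}_f$ of that path, which is $\mathsf{ap}_{f(0,-)}$.

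\textbf{Completeness.} For the second claim, $C$ is Sierpi\'nski complete precisely when every $C^{\sigma_X}$ is an equivalence. A map is an equivalence exactly when all of its fibres are contractible. By the fibre identification above, this is the same as asking that every $\mathsf{SCE}_{\mathbf X}(f)$ be contractible.

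One point needs attention here: $\mathcal{L}_{\mathit{Sierp}}$ ranges over all $\IsT{0}$-connected $X$, while the definition of unique extensions ranges over pairs $(X,\chi)$. These agree because $\mathsf{isContr}$ is a proposition, so any witness $\chi$ may be used. The little case is identical, taking $X\equiv\IsT{i}$ with its canonical witness.
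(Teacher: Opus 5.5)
Your proposal is correct and matches the reasoning the paper leaves implicit for this observation, which it states without proof. You express sections as dependent functions into fibres, transport along the evaluation equivalence $C^{X_\bot}\simeq\mathsf{SCD}_X(C)$, identify the fibres of $\sigma^*_{\mathbf{X}}$ with $\mathsf{SCE}_{\mathbf{X}}(f)$ via the function-extensionality description of paths in $\mathsf{SCD}_X(C)$, and reduce completeness to contractibility of fibres; your check against Construction~\ref{con:scd-restriction} and your remark that the choice of $\chi$ is irrelevant (as $\mathsf{isContr}$ is a proposition) correctly supply the details the paper treats as evident.
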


\begin{construction}
  Let $f\bcolon\mathsf{SCD}_X(C)$ be a Sierpi\'nski cone datum for $X$ in $C$. For each $u\bcolon \Lift(X)$ we define
  $
    \bar\epsilon_{X,u}^*f \bcolon \mathsf{SCD}_{\IsT{\pi u}}(C)
  $
  corresponding to the restriction map $C^{\bar\epsilon_X}$ as follows:
  \begin{align*}
    &\bigl(\bar\epsilon_{X,u}^*f\bigr)^\bot \colondefeq f^\bot \\
    &\bigl(\bar\epsilon_{X,(i,x)}^*f\bigr)^\gamma(j,p) \colondefeq f^\gamma(j, x(p))\\
    &\bigl(\bar\epsilon_{X,(i,x)}^*f\bigr)^H(p) \colondefeq f^H(x(p))
  \end{align*}
\end{construction}

\begin{theorem}\label{thm:horrible}
  For little-Sierpi\'nski complete $C$, there exists an assignment
  \[
    \textstyle\prod_{(f\bcolon\mathsf{SCD}_X(C))}
    \mathsf{SCE}_{\mathbf{X}}(f)
  \]
   of Sierpi\'nski extensions.%
\end{theorem}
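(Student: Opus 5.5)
The plan is to build the extension pointwise, using the retraction $H$ from the proof of Theorem~\ref{thm:sierpinski-restriction-has-retraction}, written out by hand so that the coherences can be checked directly on Sierpi\'nski cone data.

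\textbf{Defining $\hat f$.} Fix $f\bcolon\mathsf{SCD}_X(C)$. For each $u\equiv(i,x)\bcolon\Lift(X)$ we have the little datum $\bar\epsilon_{X,u}^*f\bcolon\mathsf{SCD}_{\IsT{i}}(C)$. By Observation~\ref{obs:sierp-ext}, little-Sierpi\'nski completeness of $C$ gives a contractible type $\mathsf{SCE}_{\IsT{i}}(\bar\epsilon_{X,u}^*f)$. Let $(g_u,\phi_u)$ be its centre, and set
\[
\hat f(u)\colondefeq g_u(\mathsf{gen}_i).
\]
This is exactly $C^{\delta_X}\circ(C^{\sigma_{u\pdot\IsT{\pi u}}})^{-1}\circ C^{\bar\epsilon_X}$ from Theorem~\ref{thm:sierpinski-restriction-has-retraction}. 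What remains is the homotopy $\phi_{\hat f}\colon\sigma_{\mathbf X}^*\hat f\approx_{\mathsf{SCD}_X(C)}f$, which has three components.

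\textbf{The $\gamma$-component.} For $u=(i,\mathsf{const}(x))$ I will write down a competing extension and then use contractibility. Let $k_x\colon\Lift\IsT{i}\to C$ be $k_x(j,q)\colondefeq f^\gamma(j,x)$, ignoring $q$. Its restriction is a Sierpi\'nski cone datum with:
\begin{itemize}
\item bottom $f^\gamma(0,x)$,
\item $\gamma$-part $(j,p)\mapsto f^\gamma(j,x)$,
\item $H$-part reflexivity, up to $\mathsf{ap}$ of a constant function.
\end{itemize}
It is homotopic to $\bar\epsilon^*_{X,u}f$ via $f^H(x)$ on the bottom, reflexivity on $\gamma$, and an evident filler for the square. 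So $(k_x,\text{this homotopy})$ is an element of the contractible type, and hence equals $(g_u,\phi_u)$. Evaluating at $\mathsf{gen}_i$ yields
\[
\phi^\gamma_{\hat f}(i,x)\colon \hat f(i,\mathsf{const}(x))=f^\gamma(i,x).
\]

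\textbf{The $\bot$-component.} For $u=(0,\chi.1)$, the point $\mathsf{gen}_0$ lies in the image of $\bot$ in $\Lift\IsT{0}$: it is $\mathsf{undef}$, via the canonical connectedness of $\IsT{0}$. Therefore $\phi_u^\bot$ gives
\[
\phi^\bot_{\hat f}\colon \hat f(0,\chi.1)=f^\bot.
\]

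\textbf{The $H$-coherence, the main obstacle.} For each $x$ we must show that the square formed by
\begin{itemize}
\item $\mathsf{ap}_{\hat f(0,-)}(\chi.2(\mathsf{const}\,x))$,
\item $\phi^\bot_{\hat f}$,
\item $\phi^\gamma_{\hat f}(0,x)$,
\item $f^H(x)$
\end{itemize}
commutes. My approach is to generalise over the path $\chi.2(\mathsf{const}\,x)\colon(0,\chi.1)=(0,\mathsf{const}\,x)$. Since the entire construction $u\mapsto(g_u,\phi_u)$ is a dependent function, path induction reduces the statement to $u=(0,\mathsf{const}\,x)$. There, both components are obtained by evaluating the single identification $(g_u,\phi_u)=(k_x,\dots)$: one at the bottom point, the other along $\gamma$ at $j=0$. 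Commutation then follows from the $H$-component of that identification, that is, from the filler built from $f^H(x)$ and the explicit witness that $\mathsf{gen}_0=\mathsf{undef}$.

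The bookkeeping needs care, specifically the interaction of the canonical $\IsT{0}$-connectedness structure on $\IsT{0}$ with $\chi$. I expect this to be the hardest step: it is a 2-dimensional path algebra computation that would be checked mechanically. The conceptual content is only that both sides arise from one contractible type of extensions.
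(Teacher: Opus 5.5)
Your proposal is correct and follows the paper's proof essentially step for step: the same definition $\hat f(u)\colondefeq g_u(\mathsf{gen}_{\pi u})$, the same competing extension $k_x$ (the paper's $h_i$) identified with the centre by contractibility, the same bottom component obtained by transporting $\phi_u^\bot$ along $\psi_0.2(\lambda p\pdot p)$, and the same path inductions (on $\chi.2(\mathsf{const}\,x)$, then on the identification with the competitor) for the coherence. One small correction to your last step: after you generalise over the centre and path-induct on that identification, the square reduces to $\mathsf{sym}(f^H(x))\cdot f^H(x)=\mathsf{refl}$. So only the bottom component of your competing homotopy is used, and its $H$-filler plays no role.
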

\begin{proof}
  Let $\mathbf{X} \equiv (X,\chi)$ be an explicitly $\IsT{0}$-connected type, and let $C$ have unique little Sierpi\'nski extensions.
  To each $f\bcolon \mathsf{SCD}_X(C)$, we may assign a Sierpi\'nksi extension $(\hat{f},\phi_{\hat{f}})\bcolon \mathsf{SCE}_{\mathbf{X}}(C)$. We first define $\hat{f}\colon \Lift(X)\to C$ by setting
  $
    \hat{f}(u) \colondefeq
    \overline{\bar\epsilon_{X,u}^*f}(\mathsf{gen}_{\pi u})\text{,}
  $
  where we have written $\overline{\bar\epsilon_{X,u}^*f}$ for the function part of the assumed little Sierpi\'nski extension of $\bar\epsilon_{X,u}^*f\bcolon \mathsf{SCD}_{\IsT{\pi u}}(C)$.

  It now suffices to construct a homotopy
  $
    \phi_{\hat{f}} \colon \sigma_{\mathbf{X}}^*\hat{f}\approx_{\mathsf{SCD}_X(C)}  f
  $, which amounts to constructing the identifications
  \[
    \phi_{\hat{f}}^\bot \colon \hat{f}(0,\chi.1) = f^\bot,
    \quad \phi_{\hat{f}}^\gamma(i,x) \colon \hat{f}(i,\mathsf{const}(x)) = f^\gamma(i,x),
  \]
  together with a commuting square of identifications
  \[
    \begin{tikzcd}[column sep=huge]
      \hat{f}(0,\chi.1)
        \ar[rr, Rightarrow, nfold, "{\mathsf{ap}_{\hat{f}(0,-)}~(\chi.2~(\mathsf{const}(x)))}"]
        \ar[d, Rightarrow, nfold, "\phi_{\hat{f}}^\bot"']
      && \hat{f}(0,\mathsf{const}(x))
        \ar[d, Rightarrow, nfold, "{\phi_{\hat{f}}^\gamma(0,x)}"]
      \\
      f^\bot
        \ar[rr, Rightarrow, nfold, "{f^H(x)}"']
      && f^\gamma(0,x)\text{.}
    \end{tikzcd}
  \]

  Writing $\psi_i : \mathsf{isContr}\bigl(\IsT{0}\to\IsT{i}\bigr)$ for the canonical witness that $\IsT{i}$ is $\IsT{0}$-connected, we define the first identification as follows:
  \[
    \begin{tikzcd}[column sep=huge, row sep=large]
      \hat{f}(0,\chi.1)
        \ar[d, equals, nfold]
        \ar[rr, densely dashed, Rightarrow, nfold, "\phi_{\hat{f}}^\bot"]
      && f^\bot
      \\
      \overline{\bar\epsilon_{X,(0, \chi.1)}^*f}(0,\lambda p\pdot p)
        \ar[rr, Rightarrow, nfold, curve={height=18pt}, "{\mathsf{ap}_{\overline{\bar\epsilon_{X,(0, \chi.1)}^*f}(0, {-})}~(\mathsf{sym}~(\psi_0.2~(\lambda p\pdot p)))}"']
      && \overline{\bar\epsilon_{X,(0, \chi.1)}^*f}(0, \psi_0.1)
        \ar[u, Rightarrow, nfold, "\phi_{\overline{\bar\epsilon_{X,(0, \chi.1)}^*f}}^\bot"']
    \end{tikzcd}
  \]

  Our strategy to witness $\phi^\gamma_{\hat{f}}(i,x)\colon \hat{f}(i,\mathsf{const}(x)) = f^\gamma(i,x)$ is to note that \[ \bar\epsilon_{X,(i,\mathsf{const}(x))}^*f\bcolon \mathsf{SCD}_{\IsT{i}}(C) \] already has a simpler Sierpi\'nski extension, which by our uniqueness assumption may be identified with the assumed one.

  In particular, we may define $h\colon \Lift\IsT{i}\to C$ and a homotopy $\phi_h \colon \sigma_{\IsT{i}}^*h \approx_{\mathsf{SCD}_{\IsT{i}}(C)} \bar\epsilon_{X,(i,\mathsf{const}(x))}^*f$ by setting
  \[
    h_i(u) \colondefeq f^\gamma(\pi u, x),\qquad
    \phi_{h_i}^\bot \colondefeq \mathsf{sym}~(f^H(x))\text{.}
  \]

  The remaining coherences $\phi_h^\gamma,\phi_h^H$ are trivial and their definitions play no role in the rest of the construction. Writing
  \[ \zeta_{\bar\epsilon_{X,(i,\mathsf{const}(x))}^*f}(h, \phi_h)
      \colon
      (h,\phi_h) =
      (\overline{\bar\epsilon_{X,(i,\mathsf{const}(x))}^*f},
        \phi_{\bar\epsilon_{X,(i,\mathsf{const}(x))}^*f})
  \]
  for the resulting identification of extensions, we define
  \[
    \begin{tikzcd}[column sep=huge]
      \hat{f}(i,\mathsf{const}(x))
        \ar[d,equals,nfold]
        \ar[rr,Rightarrow,nfold,densely dashed, "{\phi_{\hat{f}}^\gamma(i,x)}"]
      && f^\gamma(i,x)
      \\
      \overline{\bar\epsilon_{X,(i,\mathsf{const}(x))}^*f}(\mathsf{gen}_i)
        \ar[rr,Rightarrow,nfold,curve={height=18pt},"{\mathsf{ap}_{-.1}~(\zeta_{\bar\epsilon_{X,(i,\mathsf{const}(x))}^*f}(h_i, \phi_{h_i}))}"']
      &&
      h_i(\mathsf{gen}_i)\text{.}
        \ar[u,equals,nfold]
    \end{tikzcd}
  \]

  For the final coherence $\phi_{\hat{f}}^H$, we must fill in the following square:
  \[
    \begin{tikzcd}[row sep=huge]
      \overline{\bar\epsilon_{X,(0,\chi.1)}^*f}(\mathsf{gen}_0)
        \ar[rr, curve={height=-18pt}, Rightarrow, nfold, "{\mathsf{ap}_{\hat{f}(0,-)}~(\chi.2~(\mathsf{const}(x)))}"]
        \ar[d,Rightarrow,nfold,"{\mathsf{ap}_{\overline{\bar\epsilon_{X,(0,\chi.1)}^*f}(0,{-})}~(\mathsf{sym}~(\psi_0.2~(\lambda p\pdot p)))}"description]
      &&\overline{\bar\epsilon_{X,(0,\mathsf{const}(x))}^*f}(\mathsf{gen}_0)
        \ar[dd,Rightarrow,nfold, "{\mathsf{ap}_{-.1}~(\zeta_{\bar\epsilon_{X,(0,\mathsf{const}(x))}^*f}(h, \phi_h))}"description]
      \\
      \overline{\bar\epsilon_{X,(0,\chi.1)}^*f}(0,\psi_0.1)
        \ar[d,Rightarrow,nfold, "\phi_{\overline{\bar\epsilon_{X,(0, \chi.1)}^*f}}^\bot"description]
      \\
      f^\bot
        \ar[rr,Rightarrow,nfold,"f^H(x)"']
      && f^\gamma(0,x)
    \end{tikzcd}
  \]

  We proceed by path induction on \[ \chi.2~(\mathsf{const}(x))\colon \chi.1 = \mathsf{const}(x),\quad \psi_0.2~(\lambda p\pdot p)\colon \psi_0.1 = \lambda p\pdot p\text{,}\] simplifying the problem considerably:
  \[
    \begin{tikzcd}[row sep=huge, column sep=huge]
      \overline{\bar\epsilon_{X,(0,\mathsf{const}(x))}^*f}(\mathsf{gen}_0)
        \ar[d,Rightarrow,nfold, "\phi_{\overline{\bar\epsilon_{X,(0, \mathsf{const}(x))}^*f}}^\bot"']
        \ar[rr,Rightarrow,nfold, "{\mathsf{ap}_{-.1}~(\zeta_{\bar\epsilon_{X,(0,\mathsf{const}(x))}^*f}(h, \phi_h))}"]
      &&f^\gamma(0,x)
      \\
      f^\bot
        \ar[urr, Rightarrow,sloped,"f^H(x)"']
    \end{tikzcd}
  \]

  Then we generalise over the pair
  \[ (\overline{\bar\epsilon_{X,(0,\mathsf{const}(x))}^*f},
  \phi_{\bar\epsilon_{X,(0,\mathsf{const}(x))}^*f})\]  and perform path induction on
  \[ \zeta_{\bar\epsilon_{X,(0,\mathsf{const}(x))}^*f}(h, \phi_h)
      \colon
      (h,\phi_h) =
      (\overline{\bar\epsilon_{X,(0,\mathsf{const}(x))}^*f},
        \phi_{\bar\epsilon_{X,(0,\mathsf{const}(x))}^*f})
  \]
  and our problem reduces as follows:
  \[
    \begin{tikzcd}[row sep=large, column sep=huge]
      f^\gamma(0,x)
        \ar[d,Rightarrow,nfold, "{\mathsf{sym}~(f^H(x))}"']
        \ar[r,equals,nfold]
      &f^\gamma(0,x)
      \\
      f^\bot
        \ar[ur, Rightarrow,sloped,"f^H(x)"']
    \end{tikzcd}
  \]

  This is the symmetry law for path concatenation.
\end{proof}

\begin{corollary}\label{cor:sierpinski-restriction-has-section}
  If $C$ is little-Sierpi\'nski complete, then each Sierpi\'nski restriction map
  $
    C^{\sigma_X}\colon C^{\Lift(X)} \to C^{X_\bot}
  $
  has a section.
\end{corollary}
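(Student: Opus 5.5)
The corollary is a direct assembly of Theorem~\ref{thm:horrible} and Observation~\ref{obs:sierp-ext}.

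Fix a $\IsT{0}$-connected type $X$. Since being $\IsT{0}$-connected is the proposition $\mathsf{isContr}\bigl(X^{\IsT{0}}\bigr)$, we may choose a witness $\chi$ and form the explicitly $\IsT{0}$-connected type $\mathbf{X}\equiv(X,\chi)$. Because the witness lives in a proposition, nothing below depends on this choice. The restriction $\sigma_{\mathbf{X}}^*$ of Construction~\ref{con:scd-restriction} is exactly the map $C^{\sigma_X}$ transported along the defining equivalence $C^{X_\bot}\simeq\mathsf{SCD}_X(C)$.

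Next, Observation~\ref{obs:sierp-ext} tells us two things. First, little-Sierpi\'nski completeness of $C$ is equivalent to $C$ having unique little Sierpi\'nski extensions, which is the hypothesis under which Theorem~\ref{thm:horrible} was proved. Second, the type of sections of $C^{\sigma_X}$ is canonically equivalent to
\[
\textstyle\prod_{(f\bcolon\mathsf{SCD}_X(C))}\mathsf{SCE}_{\mathbf{X}}(f).
\]

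Theorem~\ref{thm:horrible} inhabits this product. Transporting that inhabitant back along the equivalence of Observation~\ref{obs:sierp-ext} yields a section of $C^{\sigma_X}$.

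The only point needing any care is the translation step. Under the equivalence $C^{X_\bot}\simeq\mathsf{SCD}_X(C)$, a pair $(\hat f,\phi_{\hat f})$ must correspond to a function $\hat f$ together with an identification $C^{\sigma_X}\hat f=f$. This is the content of the observation, using that homotopies of cone data are equivalent to identities by function extensionality. Since that step is already recorded earlier in the paper, I expect no real obstacle.
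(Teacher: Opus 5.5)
Your proposal is correct and is essentially the paper's proof. The paper likewise obtains the section by combining the assignment of Theorem~\ref{thm:horrible} with Observation~\ref{obs:sierp-ext}, which identifies sections of $C^{\sigma_X}$ with assignments of Sierpi\'nski extensions. Your extra remarks on choosing the connectedness witness and on the fibre-wise reading of sections are harmless elaborations of that same step.
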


\begin{proof}
  This is equivalent to the assignment from Theorem~\ref{thm:horrible}, noting Observation~\ref{obs:sierp-ext}.
\end{proof}

\begin{corollary}\label{cor:sierp=little-sierp}
  A type is Sierpi\'nski complete if and only if it is little-Sierpi\'nski complete.
\end{corollary}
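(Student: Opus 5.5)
The reverse implication is immediate. For each $i\bcolon\II$ the proposition $\IsT{i}$ is $\IsT{0}$-connected, so $\mathcal{L}_{\mathit{Sierp}}^{\mathit{little}}\subseteq\mathcal{L}_{\mathit{Sierp}}$. Orthogonality to the larger class therefore implies orthogonality to the smaller one, and every Sierpi\'nski complete type is little-Sierpi\'nski complete.

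For the forward implication, fix a little-Sierpi\'nski complete type $C$ and a $\IsT{0}$-connected type $X$. We must show that the restriction map $C^{\sigma_X}\colon C^{\Lift(X)}\to C^{X_\bot}$ is an equivalence. Theorem~\ref{thm:sierpinski-restriction-has-retraction} supplies a retraction of $C^{\sigma_X}$. Corollary~\ref{cor:sierpinski-restriction-has-section} supplies a section, via Theorem~\ref{thm:horrible} and Observation~\ref{obs:sierp-ext}. A map with both a section and a retraction is biinvertible, hence an equivalence in the sense of the HoTT Book. So $C$ is orthogonal to every $\sigma_X$, that is, Sierpi\'nski complete.

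Two points need care, and I expect the second to be the only real obstacle. First, the section from Corollary~\ref{cor:sierpinski-restriction-has-section} depends on a chosen witness $\chi$ that $X$ is $\IsT{0}$-connected. This is harmless, because $\mathsf{isContr}(X^{\IsT{0}})$ is a proposition, so $\chi$ is unique and $\sigma_X$ does not depend on it. Second, we need the biinvertibility argument to apply to the map the theorems actually address. The retraction in Theorem~\ref{thm:sierpinski-restriction-has-retraction} is stated for $C^{\sigma_X}$ itself. The section, however, is phrased in terms of $\mathsf{SCD}_X(C)$ and the restriction $\sigma_{\mathbf X}^*$. We must therefore check that $\sigma_{\mathbf X}^*$ agrees with $C^{\sigma_X}$ composed with the evaluation equivalence $C^{X_\bot}\cong\mathsf{SCD}_X(C)$. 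This is routine: it follows by unfolding the pushout description of $X_\bot$ and the two equivalent constructions of $\sigma_X$. With this agreement in hand, the section and retraction live on the same map, and the argument above goes through.
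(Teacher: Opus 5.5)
Your proposal is correct and takes the same approach as the paper, which proves the corollary by combining the retraction from Theorem~\ref{thm:sierpinski-restriction-has-retraction} with the section from Corollary~\ref{cor:sierpinski-restriction-has-section}; the trivial direction follows, as you say, from $\mathcal{L}_{\mathit{Sierp}}^{\mathit{little}}\subseteq\mathcal{L}_{\mathit{Sierp}}$. The compatibility you flag as the main obstacle is not actually open at this point: it is the content of Observation~\ref{obs:sierp-ext}, which the paper already uses so that Corollary~\ref{cor:sierpinski-restriction-has-section} is stated directly about $C^{\sigma_X}$.
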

\begin{proof}
  By Theorem~\ref{thm:sierpinski-restriction-has-retraction} and Corollary~\ref{cor:sierpinski-restriction-has-section}.
\end{proof}

\begin{corollary}
  The Sierpi\'nski complete types form an accessible reflective localisation.
\end{corollary}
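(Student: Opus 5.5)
By Corollary~\ref{cor:sierp=little-sierp}, a type is Sierpi\'nski complete exactly when it is little-Sierpi\'nski complete, i.e.\ when it is orthogonal to $\mathcal{L}_{\mathit{Sierp}}^{\mathit{little}} = \{\sigma_{\IsT{i}}\colon \IsT{i}_\bot\hookrightarrow\Lift\IsT{i}\mid i\bcolon\II\}$. The plan is therefore to exhibit the Sierpi\'nski complete types as the local types for a family of maps indexed by a \emph{small} type. We can then invoke the general theory of localisation of Rijke, Shulman and Spitters~\cite{rijke-shulman-spitters:2020}: for any family $F\colon\prod_{(a\bcolon A)} B(a)\to C(a)$ with $A$, $B$, $C$ in a universe, the $F$-local types form a reflective subuniverse, the localisation being constructed as a higher inductive type. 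This is exactly what ``accessible'' means in \emph{op.\ cit.}

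Concretely, the index type is $A\colondefeq\II$, which is small because it is the carrier set $\mathsf{U}\J$ of the bounded distributive lattice $\J$, assumed to live in the ambient universe. The domains $B(i)\colondefeq\IsT{i}_\bot\equiv\sum_{(j\bcolon\II)}\IsF{j}*\IsT{i}$ are built from $\II$, propositions and pushouts, so they are small. The codomains $C(i)\colondefeq\Lift\IsT{i}\equiv\sum_{(j\bcolon\II)}\IsT{i}^{\IsT{j}}$ are built from $\II$, sums and function types, so they are small as well. The family of maps is $i\mapsto\sigma_{\IsT{i}}$. This definition needs no hypothesis on $\J$, since each $\IsT{i}$ is canonically $\IsT{0}$-connected, as noted before Construction~\ref{con:scd-restriction}.

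Next I would check that ``orthogonal to $\mathcal{L}_{\mathit{Sierp}}^{\mathit{little}}$'' coincides with ``$F$-local'' in the sense of \emph{op.\ cit.}, namely that each precomposition map $C^{\sigma_{\IsT{i}}}$ is an equivalence for all $i\bcolon\II$. This is just the definition given above. Chaining the two identifications then shows that the Sierpi\'nski complete types are precisely the $F$-local types for a small family, and hence form an accessible reflective subuniverse.

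The only genuine obstacle is the size issue. The original class $\mathcal{L}_{\mathit{Sierp}}$ ranges over all $\IsT{0}$-connected types and is not small, so its localisation is not directly available. Corollary~\ref{cor:sierp=little-sierp} is exactly what removes this obstacle. The only remaining check is that the universe levels of $\IsT{i}_\bot$ and $\Lift\IsT{i}$ do not increase, which holds because propositions, pushouts and sums over $\II$ stay within the universe containing $\J$.
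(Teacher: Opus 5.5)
Your proposal is correct and is essentially the paper's argument. Corollary~\ref{cor:sierp=little-sierp} reduces Sierpi\'nski completeness to orthogonality against the $\II$-indexed family $\sigma_{\IsT{i}}\colon \IsT{i}_\bot\hookrightarrow\Lift\IsT{i}$, and localisation at such a small family is an accessible reflective subuniverse by Rijke, Shulman and Spitters; your explicit size check on $\II$, $\IsT{i}_\bot$ and $\Lift\IsT{i}$ just spells out what the paper calls obvious.
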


\begin{proof}
  The little-Sierpi\'nski complete types are obviously the reflective localisation at the family $\{ \IsT{i}_\bot\hookrightarrow\Lift\IsT{i} \mid i \bcolon\II\}$, and we have shown in Corollary~\ref{cor:sierp=little-sierp} that these are exactly the Sierpi\'nski complete types.
\end{proof}

\section{Exponentiability in local subuniverses}

We will now generalise the results of Pugh and Sterling~\cite{pugh-sterling-2025} of closure under partial map classifiers $\Lift(X)$ to consider closure under relative open-partial map classifiers $\Lift_{B}(f)$ for functions $f\colon E\to B$.

\begin{definition}
  Let $\mathcal{S}, \mathcal{L}$ be classes of maps. We define $\mathcal{S}^*\mathcal{L}$ to be the collection of functions $f'\colon A' \to B'$ arising as pullbacks of some $f\in \mathcal{L}$ along some $g\in \mathcal{S}$ as depicted below:
  \[
    \begin{tikzcd}
      |[elbow nw]|A'
        \ar[r]
        \ar[d, "f'\in \mathcal{S}^*\mathcal{L}"']
      & A
        \ar[d, "f\in \mathcal{L}"]
      \\
      B'
        \ar[r, "g\in\mathcal{S}"']
      & B
    \end{tikzcd}
  \]

  When $\mathcal{S}$ is the class of \emph{all} maps, we shall write $\Pb{\mathcal{L}}$ for $\mathcal{S}^*\mathcal{L}$.
\end{definition}

\begin{definition}
  A function $f\colon A\to B$ is called \emph{$\mathcal{L}$-exponentiable} when every $\mathcal{L}$-local type is also $\Pb{f}^*\mathcal{L}$-local.
\end{definition}

\begin{notation}
  For any function $p\colon E\to B$, we shall write
  \[
    \mathcal{P}_{p}(X) \colondefeq \textstyle\sum_{(x\bcolon B)}X^{\mathsf{fib}_p(x)}
  \] for the corresponding partial product functor.
\end{notation}

Theorem~\ref{thm:closure-under-generalised-logical-mapping-cylinder} below generalises Fiore's well-completeness lemma as presented in Lemma~4.5 of Pugh and Sterling~\cite{pugh-sterling-2025}.

\begin{theorem}\label{thm:closure-under-generalised-logical-mapping-cylinder}
  Let $\mathcal{L}$ be a class of maps, and let $p\colon E\to B$ be an arbitrary function such that $B$ is $\mathcal{L}$-local. Then the class of $\mathcal{L}$-local types is closed under formation of sums \[
    \textstyle \sum_{(y\bcolon Y)}
    \mathcal{P}_p(\mathsf{fib}_f(y))
  \]
  for $\Pb{p}^*\mathcal{L}$-local maps $f\colon X\to Y$.
\end{theorem}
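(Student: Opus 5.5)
The plan is to show that the type
\[
  Z \colondefeq \textstyle\sum_{(y\bcolon Y)}\mathcal{P}_p(\mathsf{fib}_f(y))
  \;\cong\;
  \textstyle\sum_{(b\bcolon B)}\sum_{(y\bcolon Y)}\bigl(\mathsf{fib}_p(b)\to\mathsf{fib}_f(y)\bigr)
\]
is $\mathcal{L}$-local. I would reduce this to the locality of $B$ together with the right orthogonality of the projection $Z\to B$ against $\mathcal{L}$. The standard fact is that a type over an $\mathcal{L}$-local base, whose structure map is $\mathcal{L}$-local as a map, is itself $\mathcal{L}$-local. So the work is to check that $q\colon Z\to B$ is an $\mathcal{L}$-local map.

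First I rewrite the fibre of $q$ over $b$. Unfolding $\mathsf{fib}_f(y)$ gives
\[
  \textstyle\sum_{(y\bcolon Y)}\bigl(\mathsf{fib}_p(b)\to\mathsf{fib}_f(y)\bigr)
  \;\cong\;
  \textstyle\sum_{(y\bcolon Y)}\sum_{(g\bcolon \mathsf{fib}_p(b)\to X)} f\circ g = \mathsf{const}(y),
\]
which is the type of maps $\mathsf{fib}_p(b)\to X$ together with a factorisation of $f\circ g$ through $\One$.

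Next I fix $\ell\colon A\to A'$ in $\mathcal{L}$ and $a\colon A'\to B$. Pulling $p$ back along $a$ and along $a\circ\ell$ gives $E'\to A'$ and $E_A\to A$, and the induced comparison map $\ell'\colon E_A\to E'$ is a pullback of $\ell$ along $E'\to A'$. Since $E'\to A'$ is itself a pullback of $p$, the map $\ell'$ lies in $\Pb{p}^*\mathcal{L}$. By the fibre description, a dependent map $\prod_{(a'\bcolon A')}\mathsf{fib}_q(a(a'))$ amounts to a map $y\colon A'\to Y$ and a map $g\colon E'\to X$ with $f\circ g$ equal to $y$ composed with the projection $E'\to A'$, and similarly over $A$. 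The restriction map for $q$ against $\ell$ therefore splits into two stages. The first stage restricts $y$ along $\ell$. The second, for fixed $y$, restricts lifts $g$ of the map $E'\to A'\to Y$ through $f$ along $\ell'$. The second stage is an equivalence fibrewise because $f$ is right orthogonal to $\ell'\in\Pb{p}^*\mathcal{L}$. That orthogonality says exactly that $X^{E'}\to X^{E_A}\times_{Y^{E_A}}Y^{E'}$ is an equivalence, and I take its fibre over the given $Y$-component.

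I expect the main obstacle to be the first stage, the extension of $y\colon A\to Y$ along $\ell$, which requires $Y$ itself to be $\mathcal{L}$-local. I would first try to obtain this from the hypotheses, as the case where the relevant fibres of $p$ are empty, in which the sum degenerates to a copy of $Y$. Failing that, I would read the statement as concerning maps $f$ between $\mathcal{L}$-local types, which is the situation of the intended applications with $f=\eta_X$. With both stages in hand, the composite restriction map is an equivalence. Hence $q$ is $\mathcal{L}$-local, and combining this with locality of $B$ gives locality of $Z$.
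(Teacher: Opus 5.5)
Your argument is correct once $Y$ is taken to be $\mathcal{L}$-local. That is what ``closed under'' means here, and it is all you need, since locality of $X$ is never used. The paper gives no separate proof and only presents the result as a generalisation of Fiore's well-completeness lemma (Lemma~4.5 of Pugh and Sterling). Your two-stage argument is the natural generalisation of that argument: the $B$-component extends by locality of $B$, the $Y$-component by locality of $Y$, and the partial section by right orthogonality of $f$ to $\ell'\in\Pb{p}^*\mathcal{L}$.

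Do drop the idea of deriving locality of $Y$ from the other hypotheses, because the empty-fibre case is a counterexample rather than a derivation. For $p\colon\emptyset\to\One$, the sum is just $Y$, while every map in $\Pb{p}^*\mathcal{L}$ is an equivalence $\emptyset\to\emptyset$. So without that hypothesis the statement would make every type $\mathcal{L}$-local.
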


\subsection{Closure under partial map classifiers}

The following lemmas are proved similarly to Lemma~5.11 and Theorem~5.12 of Pugh and Sterling~\cite{pugh-sterling-2025}.

\begin{lemma}\label{lem:1-exponentiability}
  Suppose that $\J$ is disjunctive and satisfies Phoa's principle. Then $\{1\}\hookrightarrow \II$ is $\mathcal{L}_{\mathit{Segal}}$-exponentiable.
\end{lemma}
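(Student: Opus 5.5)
The claim to prove is that every Segal complete type $C$ is also orthogonal to every map in $\Pb{(\{1\}\hookrightarrow\II)}^*\mathcal{L}_{\mathit{Segal}}$. Pulling back $\{1\}\hookrightarrow\II$ along an arbitrary map and then pulling back $\Horn\hookrightarrow\Spx{\Two}$ along the result is again a pullback of $\{1\}\hookrightarrow\II$. So the maps to handle are exactly the restricted inner horn inclusions
\[
  \{\Horn\mid \varphi\}\hookrightarrow\{\Spx{\Two}\mid\varphi\}
\]
for an observation $\varphi\colon\Spx{\Two}\to\II$. The plan is to decompose this inclusion, using Phoa's principle and disjunctivity, into a colimit in the arrow category of maps of the form $P\times(\Horn\hookrightarrow\Spx{\Two})$ or $P\times\mathrm{id}$, where $P$ is a proposition. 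Left classes of maps against a fixed $C$ are closed under colimits in the arrow category, so this finishes the proof. Moreover $C^{P\times Y}\cong (C^Y)^P$, so $C$ is orthogonal to $P\times(\Horn\hookrightarrow\Spx{\Two})$ whenever it is Segal complete.

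The first step is a normal form for $\varphi$. The map $(i,j)\mapsto (i,i\land j)$ exhibits $\Spx{\Two}$ as a retract of $\II^2$, so $\varphi$ extends to a map $\II^2\to\II$. By Phoa's principle, applied in each variable (equivalently, quasi-coherence of $\J[\mathsf{x},\mathsf{y}]$), this map is a lattice polynomial. Restricting to $j\le i$ collapses the $i\land j$ term, and we obtain constants $a,b,c\bcolon\II$ with
\[
  \varphi(i,j) = a\lor(i\land b)\lor(j\land c).
\]
The indicator $\IsT{-}$ always preserves binary meets, and by disjunctivity it preserves binary joins. Hence
\[
  \IsT{\varphi(i,j)} \Leftrightarrow \IsT{a}\lor(\IsT{b}\land\IsT{i})\lor(\IsT{c}\land\IsT{j}).
\]
Since $j\le i$, the condition $\IsT{j}$ forces $(i,j)=(1,1)$. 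So $\{\Spx{\Two}\mid\varphi\}$ is the union of three subtypes: $\IsT{a}\times\Spx{\Two}$, the edge $\IsT{b}\times\{(1,j)\}\cong\IsT{b}\times\II$, and the vertex $\IsT{c}\times\{(1,1)\}$.

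Next, I present this union as an iterated pushout of subobjects over their pairwise intersections. This is legitimate because a union of embeddings is the pushout over their intersection. Pulling back along $\Horn\hookrightarrow\Spx{\Two}$ preserves these pushouts by descent (pullback is a left adjoint in $\infty$-toposes, or one can argue directly by fibrewise propositional reasoning). So $\{\Horn\mid\varphi\}\hookrightarrow\{\Spx{\Two}\mid\varphi\}$ is the corresponding colimit of the restricted pieces. I then check the pieces one at a time:
\begin{itemize}
  \item On $\IsT{a}\times\Spx{\Two}$ the map is $\IsT{a}\times(\Horn\hookrightarrow\Spx{\Two})$.
  \item On the edge $\{i=1\}$, every point already lies in $\Horn$, since $\Horn$ is cut out by $\IsF{j}\lor\IsT{i}$. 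So the map there is an identity, and likewise on the vertex.
  \item The intersections are propositions times the edge, the vertex, or $\Spx{\Two}$, and in each case the map is either an identity or $P\times(\Horn\hookrightarrow\Spx{\Two})$ with $P=\IsT{a}\land\IsT{b}$ and so on.
\end{itemize}
Each piece is left orthogonal to $C$, hence so is the colimit.

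I expect the main obstacle to be the first step: deriving the polynomial normal form on $\Spx{\Two}$ rigorously from the stated form of Phoa's principle, which only concerns functions $\II\to\II$. I would first apply clause (3) of Phoa's principle in each variable separately to a function $\II^2\to\II$, obtaining $\alpha(i,j)=\alpha(0,0)\lor(i\land\cdots)\lor\cdots$, and then transport the result along the retraction onto $\Spx{\Two}$. The bookkeeping for the pushout decomposition is routine but fiddly, and I would keep it at the level of propositions wherever possible.
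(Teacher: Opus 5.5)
Your proof is correct, and it is essentially the argument the paper has in mind when it defers to Lemma~5.11 and Theorem~5.12 of Pugh and Sterling: Phoa's principle puts $\varphi$ in the vertex normal form $\varphi(i,j)=\varphi(0,0)\lor(i\land\varphi(1,0))\lor(j\land\varphi(1,1))$, disjunctivity splits $\{\Spx{\Two}\mid\varphi\}$ into three pieces, and closure of the left class under colimits in the arrow category finishes the proof. One small correction, which does not affect the conclusion: every piece lying over the edge $\{i=1\}$ already sits inside $\Horn$, so the map is an identity there. This includes all the intersections, which are $(\IsT{a}\land\IsT{b})\times\II$ or copies of the vertex, not maps of the form $P\times(\Horn\hookrightarrow\Spx{\Two})$. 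Consequently the whole inclusion is simply a pushout (cobase change) of the single map $\IsT{a}\times(\Horn\hookrightarrow\Spx{\Two})$.
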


\begin{lemma}\label{lem:0-exponentiability}
  Suppose that $\J$ is conjunctive and satisfies Phoa's principle. Then $\{0\}\hookrightarrow \II$ is $\mathcal{L}_{\mathit{Segal}}$-exponentiable.
\end{lemma}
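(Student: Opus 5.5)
The plan is to unfold the definition of exponentiability and reduce to orthogonality against a colimit of maps that are each visibly inverted by Segal complete types. Maps in $\Pb{\{0\}\hookrightarrow\II}$ are exactly the (cloven) closed embeddings $\{X\mid \alpha=0\}\hookrightarrow X$ for observations $\alpha\colon X\to\II$. So the left maps against which we must check locality are, for each $\alpha\colon\Spx{\Two}\to\II$, the restricted inner horn inclusions
\[
  \{\Horn\mid \alpha = 0\}\hookrightarrow\{\Spx{\Two}\mid\alpha=0\}.
\]
We must show that every Segal complete type $C$ is orthogonal to each of these.

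\textbf{Normal form for $\alpha$.} By Phoa's principle (via the presentation $\Spx{\Two}\cong\Spec{\J}\J[\mathsf{x},\mathsf{y}]/\mathsf{x}\geq\mathsf{y}$), any $\alpha$ can be written as $\alpha(i,j)=a\lor(i\land b)\lor(j\land c)$ for parameters $a,b,c\bcolon\II$. The coefficients are obtained from the values of $\alpha$ at the vertices, and we may absorb so that $a\le b$ and $c$ suitably. The indicator $\IsF{-}$ always sends joins to meets. Conjunctivity says it sends binary meets to joins. Together these give
\[
  \IsF{\alpha(i,j)}\iff \IsF{a}\land(\IsF{i}\lor\IsF{b})\land(\IsF{j}\lor\IsF{c}).
\]
Hence the closed subspace $\{\Spx{\Two}\mid\alpha=0\}$ is the sum over the proposition $\IsF{a}$ of a union of three pieces:
\begin{itemize}
  \item the whole triangle, guarded by the proposition $\IsF{b}\land\IsF{c}$;
  \item the bottom edge $\{(i,0)\}$, guarded by $\IsF{c}$;
  \item the vertex $(0,0)$.
\end{itemize}
The pieces $\{\IsF{i}\land\ldots\}$ collapse because $i\ge j$. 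A union of subtypes given by a disjunction is the pushout over the intersection. Pulling back the horn inclusion along this colimit decomposition (pullback commutes with colimits in types), the restricted horn inclusion becomes a colimit, in the arrow category, of the following maps:
\begin{itemize}
  \item $P\times\Horn\hookrightarrow P\times\Spx{\Two}$ for a proposition $P$;
  \item $Q\times\Spx{\One}\to Q\times\Spx{\One}$, an identity, since the bottom edge lies in the horn;
  \item identities on the vertex;
  \item their pairwise intersections, which are again of one of these forms or identities.
\end{itemize}
Finally we take a sum over $\IsF{a}$.

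\textbf{Closure properties.} $C$ is orthogonal to $P\times\Horn\hookrightarrow P\times\Spx{\Two}$ because $C^{P\times Y}\cong(C^Y)^P$ and $C^{\iota}$ is an equivalence. Identities are trivially inverted. Maps inverted by $C^{(-)}$ are closed under colimits in the arrow category, since $C^{(-)}$ turns colimits into limits and limits of equivalences are equivalences. This closure includes pushouts and sums over propositions. Combining these facts gives orthogonality of $C$ to the restricted horn inclusion, as required.

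\textbf{Main obstacle.} The delicate step is the geometric decomposition. We must check that the normal form from Phoa's principle, together with conjunctivity, yields exactly the stated union of guarded pieces, and that the intersections (for instance $\IsF{b}\land\IsF{c}$ intersected with the edge) are themselves of the harmless forms above. I would first verify this pointwise on the fibre over the propositions $\IsF{b},\IsF{c}$. I would then check that the comparison from the pushout to the subtype is an equivalence, using that all the types involved are subtypes of $\Spx{\Two}$, so the union of subtypes is computed by the propositional disjunction.
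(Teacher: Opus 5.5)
Your proposal is correct and follows the route the paper points to, namely dualising Pugh and Sterling's argument for the open point $\{1\}\hookrightarrow\II$. You reduce to the restricted horn inclusions $\{\Horn\mid\alpha=0\}\hookrightarrow\{\Spx{\Two}\mid\alpha=0\}$, split the closed subspace into guarded pieces via the Phoa normal form and conjunctivity, and observe that everything outside the guarded copy of $\Spx{\Two}$ already lies in $\Horn$, so the map is a colimit (indeed a cobase change) of $P\times(\Horn\hookrightarrow\Spx{\Two})$. Two small corrections, neither of which affects the argument:
\begin{enumerate}
  \item The edge $\{(i,0)\}$ comes from the cross term $\IsF{j}\land\IsF{b}$, so its guard is $\IsF{b}$ (the coefficient of $i$), not $\IsF{c}$. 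For instance, $\alpha(i,j)=j$ vanishes on the whole edge. This is harmless because you only use that the edge lies in $\Horn$.
  \item The normal form on $\Spx{\Two}$ is best justified by precomposing with the retraction $(i,j)\mapsto(i,i\land j)$ of $\Spx{\Two}\hookrightarrow\II^2$. Phoa's principle directly concerns only free algebras, not the quotient presenting $\Spx{\Two}$.
\end{enumerate}
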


\begin{lemma}
 If $\J$ is local and dominant, then $\{1\}\hookrightarrow\II$ is $\mathcal{L}_{\mathit{Segal}}^{\mathit{based}}$-exponentiable.
\end{lemma}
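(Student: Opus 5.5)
The plan is to reduce exponentiability to an orthogonality statement about the projection $\pi\colon \Lift(C)\to\II$, and then to verify that orthogonality one based horn at a time.

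\textbf{Reduction to the map $\pi$.} Fix a based Segal complete type $C$. We must show that $C$ is orthogonal to every pullback of a map $\IsT{i}_\bot\hookrightarrow \II/i$ along a pullback of $\{1\}\hookrightarrow\II$, that is, along an open embedding $U\hookrightarrow \II/i$ cut out by some observation $\alpha\colon\II/i\to\II$. Since $\J$ is dominant it is conservative, so $\{1\}\hookrightarrow\II$ classifies open embeddings. Hence, for any $Y$ with a map $Y\to\II/i$, maps from $Y\times_{\II/i}U$ into $C$ are the same as lifts of $\alpha$ through $\pi\colon\Lift(C)\to\II$ (maps $Y\to\Lift(C)$ lying over $\alpha$). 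This is the partial-product universal property of $\Lift(C)=\sum_{(k\bcolon\II)}C^{\IsT{k}}$. It is natural in $Y$, so it turns the restriction map for the pulled-back inclusion into the map on fibres over $\alpha$ of the square comparing lifts along $\IsT{i}_\bot\hookrightarrow\II/i$ with lifts along $\II/i$. So the goal is equivalent to: for every $i$, the inclusion $\IsT{i}_\bot\hookrightarrow\II/i$ is left orthogonal to the map $\pi$. This is the same reduction used for Lemma~5.11 of Pugh and Sterling, done relative to the fixed observation $\alpha$.

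\textbf{Splitting the horn.} Next, use that $\J$ is local and dominant. Locality gives $\IsT{a\lor b}\Leftrightarrow \IsT{a}\lor\IsT{b}$, and $\IsT{0}$ is false. The points of $\IsT{i}_\bot$ are the $j\leq i$ with $\IsF{j}\lor\IsT{i}$, so $\IsT{i}_\bot$ is the union of the closed point $\{0\}$ and the whole of $\II/i$ when $\IsT{i}$ holds. The lifting problem therefore splits into two pieces of data. The first is a partial element of $C$ over $\alpha(0)$, coming from the point $0$. The second is a partial element over $\alpha(j)$ for each $j\leq i$, but only under the hypothesis $\IsT{i}$. These must agree on the overlap.

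Dominance lets us compose the open embedding $\{j\bcolon\II/i\mid \IsT{\alpha(j)}\}\hookrightarrow \II/i$ with further open conditions. I will use this to rewrite the extension problem as one of two kinds:
\begin{itemize}
\item a problem for a map out of $\II/i$ into $C$ restricted to an open part; this can be solved pointwise using the based Segal completeness of $C$ at a suitable smaller base $\II/i'$;
\item a problem that is trivial because the relevant proposition is $\IsT{0}$, which is false by strictness.
\end{itemize}

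\textbf{Expected obstacle.} The hard step is the second one: identifying the pullback $\{j\in\IsT{i}_\bot\mid\IsT{\alpha(j)}\}\hookrightarrow\{j\in\II/i\mid\IsT{\alpha(j)}\}$ as an instance of (a sum of) maps in $\mathcal{L}_{\mathit{Segal}}^{\mathit{based}}$ or an equivalence, without assuming Phoa's principle to decompose $\alpha$.

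My first attempt will be to work fibrewise over $k\bcolon\II$ with $k=\alpha(j)$. Dominance makes $\IsT{k}$ together with $\IsT{i}$ again an open proposition, classified by some element. I will then use locality to distribute the join $\IsF{j}\lor\IsT{i}$ over that open proposition. The intended outcome is an identification of the pulled-back inclusion with $\IsT{i'}_\bot\hookrightarrow\II/i'$ for a suitable $i'$, computed from $i$ and the open condition, followed by an appeal to based Segal completeness of $C$.
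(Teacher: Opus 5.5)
Your reduction is sound. By the partial-product property of $\Lift(C)$, orthogonality of $C$ to the pullback $U'\hookrightarrow U$ of $\IsT{i}_\bot\hookrightarrow\II/i$ along $U=\{j\le i\mid\IsT{\alpha(j)}\}\hookrightarrow\II/i$ amounts to unique lifting against $\pi\colon\Lift(C)\to\II$ over $\alpha$. The decomposition $U'=(U\cap\{0\})\cup\{j\in U\mid\IsT{i}\}$ is also right. (The paper gives no argument of its own here, only a pointer to Pugh and Sterling.)

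The gap is the decisive step, which you leave open, and the route you sketch for it cannot be completed. Rewriting the problem as ``one of two kinds'' amounts to deciding whether an open proposition holds or is $\IsT{0}$. That is not available constructively (compare quotient-initiality). More fundamentally, locality and dominance say nothing about $\alpha$ between $0$ and a point $j\in U$. So $U$ need not contain any path from $0$ to $j$, and every based-horn argument needs one.

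In fact no argument from these hypotheses alone can work. Take $\J=\Prop$, which is local and dominant ($\IsT{-}$ is the identity and every embedding is open), and set $i:=P$ and $\alpha(j):=\neg\neg j$. Then $U=\{j\le P\mid\neg\neg j\}$. Since $\IsF{j}$ is $\neg j$, we get $U'=\{j\in U\mid P\}$. Every proposition is based Segal complete, because $0$ lies in both $\IsT{k}_\bot$ and $\II/k$. So $P$ would be orthogonal to $U'\hookrightarrow U$, and the projection $U'\to P$ would extend to $U\to P$. Since $P\in U$ whenever $\neg\neg P$, we would get $\neg\neg P\to P$ for every proposition $P$ (modulo universe levels).

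The missing ingredient is Phoa's principle. The neighbouring lemmas assume it, and it holds wherever this lemma is applied in the paper. Precomposing $\alpha$ with the retraction $t\mapsto t\land i$ gives $\alpha(j)=\alpha(0)\lor(j\land\alpha(i))$ for $j\le i$. By locality, $U=A\cup B$ with $A=\{j\le i\mid\IsT{\alpha(0)}\}$ and $B=\{j\le i\mid\IsT{j}\land\IsT{\alpha(i)}\}$. Since $\IsT{j}$ and $j\le i$ force $\IsT{i}$, we have $B\subseteq\IsT{i}_\bot$. Hence $U'=A'\cup B$ with $A':=A\cap\IsT{i}_\bot$ and $A'\cap B=A\cap B$.

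Unions of subtypes are pushouts over intersections, so $C^U\to C^{U'}$ is the induced map $C^A\times_{C^{A\cap B}}C^B\to C^{A'}\times_{C^{A\cap B}}C^B$. This is an equivalence, because $C^A\to C^{A'}$ is $(C^{\II/i})^{\IsT{\alpha(0)}}\to(C^{\IsT{i}_\bot})^{\IsT{\alpha(0)}}$.

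So, contrary to your intended outcome, the pulled-back map is in general not a single based horn $\IsT{i'}_\bot\hookrightarrow\II/i'$. It is a based horn multiplied by an open proposition and glued to an identity. Dominance is not what makes this step work.
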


The following generalises the results of Pugh and Sterling~\cite{pugh-sterling-2025} on closure under open-partial map classifiers.

\begin{theorem}\label{thm:segal-relative-pmc}
  When $\J$ is disjunctive, dominant, and satisfies Phoa's principle, the Segal complete types are closed under relative open-partial map classifiers of functions $p\colon E\to B$ between Segal complete types. If $\J$ is furthermore strict, the based Segal complete or (equivalently) Sierpi\'nski complete types are closed under formation of relative open-partial map classifiers.
\end{theorem}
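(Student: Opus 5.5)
The plan is to apply Theorem~\ref{thm:closure-under-generalised-logical-mapping-cylinder} with $p$ taken to be the generic open embedding $\{1\}\hookrightarrow\II$. For this $p$ we have $\mathsf{fib}_p(i)\simeq\IsT{i}$, so $\mathcal{P}_p(Z)\equiv\sum_{(i\bcolon\II)}Z^{\IsT{i}}=\Lift(Z)$. For a function $f\colon E\to B$ we then get
\[
  \textstyle\sum_{(y\bcolon B)}\mathcal{P}_p(\mathsf{fib}_f(y))=\Lift_B(f).
\]
The relative open-partial map classifier is therefore literally an instance of the sums treated in Theorem~\ref{thm:closure-under-generalised-logical-mapping-cylinder}. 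Two hypotheses remain to be verified. First, the base $\II$ must be $\mathcal{L}$-local. Second, $f$ must be $\Pb{p}^*\mathcal{L}$-local, which should follow from $\mathcal{L}$-exponentiability of $p$.

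For the Segal case, take $\mathcal{L}=\mathcal{L}_{\mathit{Segal}}$. By Proposition~\ref{prop:interval-segal}, Phoa's principle makes $\II$ Segal complete. By Lemma~\ref{lem:1-exponentiability}, disjunctivity together with Phoa's principle makes $p$ Segal-exponentiable: every Segal complete type is $\Pb{p}^*\mathcal{L}_{\mathit{Segal}}$-local. The main obstacle is that the theorem needs the \emph{map} $f\colon E\to B$ to be $\Pb{p}^*\mathcal{L}$-local, not merely its domain and codomain.

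I would handle this with the standard fact that a map between $\mathcal{K}$-local types is $\mathcal{K}$-local as a map. For each $k\colon S\to T$ in $\mathcal{K}$, the square comparing $E^T\to E^S$ and $B^T\to B^S$ has equivalences as its horizontal edges. Its gap map is therefore an equivalence, which is exactly the fibrewise orthogonality defining a local map. Here $\mathcal{K}=\Pb{p}^*\mathcal{L}_{\mathit{Segal}}$, and $E$ and $B$ are $\mathcal{K}$-local by exponentiability because they are Segal complete. Dominance is needed at this point: it ensures that $\eta$ and the cloven classifier are well behaved and that $\Lift$ is the genuine partial map classifier. This justifies identifying $\Lift_B(f)$ with the sum produced by the theorem when fibres of $f$ need not be propositions. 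I would check this identification carefully rather than assume it.

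For the strict case, strictness makes every type $\IsT{0}$-connected, and Corollary~\ref{cor:sierp=little-sierp} together with the corollary identifying based Segal and little-Sierpi\'nski completeness (since dominance gives conservativity) shows that the three notions coincide. The argument is then repeated with $\mathcal{L}=\mathcal{L}^{\mathit{based}}_{\mathit{Segal}}$. Proposition~\ref{prop:interval-segal} (strict, dominant, Phoa) gives based Segal completeness of $\II$. For exponentiability I would invoke the lemma that $\{1\}\hookrightarrow\II$ is $\mathcal{L}^{\mathit{based}}_{\mathit{Segal}}$-exponentiable when $\J$ is local and dominant, noting that strict plus disjunctive gives local. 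The same map-locality argument then finishes the proof.
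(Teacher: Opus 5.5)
Your proposal is correct and is essentially the paper's proof. You instantiate Theorem~\ref{thm:closure-under-generalised-logical-mapping-cylinder} at $p=\{1\}\hookrightarrow\II$, use Lemma~\ref{lem:1-exponentiability} (or its local-and-dominant based analogue in the strict case) to make $E$ and $B$ local for $\Pb{p}^*\mathcal{L}$, and conclude that $f$ is a $\Pb{p}^*\mathcal{L}$-local map because it is a map between local types, which is exactly what the paper's remark about inner fibrations over a Segal base amounts to. One correction to your aside on dominance: the identification $\Lift_B(f)=\sum_{(y\bcolon B)}\mathcal{P}_p(\mathsf{fib}_f(y))$ is definitional and needs no hypothesis, and dominance actually enters only through conservativity and the based-Segal inputs (Proposition~\ref{prop:interval-segal} and the based exponentiability lemma).
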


\begin{proof}
  Let $f\colon E\to B$ be a function between Segal complete types. We apply Theorem~\ref{thm:closure-under-generalised-logical-mapping-cylinder}, and we must check that $f\colon E\to B$ is local with respect to
  $\Pb{\{1\}\hookrightarrow\II}^*\mathcal{L}_{\mathit{Segal}}$,
  which follows immediately from Lemma~\ref{lem:1-exponentiability}, noting that over a Segal complete base, a fibration is inner if and only if the total space is Segal.
\end{proof}

\begin{theorem}
  When $\J$ is local, dominant, and satisfies Phoa's principle, the based Segal complete types are closed under relative open-partial map classifiers of functions $p\colon E\to B$ between based Segal complete types.
\end{theorem}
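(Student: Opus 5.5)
The plan mirrors the proof of Theorem~\ref{thm:segal-relative-pmc}, replacing $\mathcal{L}_{\mathit{Segal}}$ by $\mathcal{L}_{\mathit{Segal}}^{\mathit{based}}$ and Lemma~\ref{lem:1-exponentiability} by the lemma stating that $\{1\}\hookrightarrow\II$ is $\mathcal{L}_{\mathit{Segal}}^{\mathit{based}}$-exponentiable when $\J$ is local and dominant.

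\textbf{Setup.} Let $p\colon E\to B$ be a function between based Segal complete types. Unfolding definitions, the relative open-partial map classifier is
\[
  \Lift_B(p) \equiv \textstyle\sum_{(x\bcolon B)}\sum_{(i\bcolon\II)}\mathsf{fib}_p(x)^{\IsT{i}}.
\]
Reassociating, this is the sum $\sum_{(y\bcolon Y)}\mathcal{P}_{q}(\mathsf{fib}_f(y))$, where $q\colon\{1\}\hookrightarrow\II$ is the generic open embedding, $Y\equiv B\times\II$, and $f\colon E\times\II\to B\times\II$ is $p\times\II$. Indeed $\mathsf{fib}_f(x,i)\cong\mathsf{fib}_p(x)$ and $\mathcal{P}_q(Z)\cong\sum_{(j\bcolon\II)}Z^{\IsT{j}}$. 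Since only one level of partial product should appear, I would instead take $Y\equiv B$ and $f\equiv p$ directly: then $\sum_{(x\bcolon B)}\mathcal{P}_q(\mathsf{fib}_p(x))$ is exactly $\Lift_B(p)$. So we apply Theorem~\ref{thm:closure-under-generalised-logical-mapping-cylinder} with $\mathcal{L}\equiv\mathcal{L}_{\mathit{Segal}}^{\mathit{based}}$ and the map $q$, whose codomain $\II$ must be $\mathcal{L}$-local.

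\textbf{Step 1: $\II$ is based Segal complete.} Proposition~\ref{prop:interval-segal} gives this under Phoa's principle together with strictness and dominance. Locality includes strictness (preservation of the empty join), and dominance is assumed, so this step goes through.

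\textbf{Step 2: $p$ is $\Pb{q}^*\mathcal{L}_{\mathit{Segal}}^{\mathit{based}}$-local.} We must show that $p$, viewed as a type family over $B$, is right orthogonal to every pullback of a based Segal horn along $q$. I would argue fibrewise. Since $B$ is based Segal complete, a map $p$ is $\mathcal{K}$-local (for a class $\mathcal{K}$) exactly when its total space $E$ is $\mathcal{K}$-local relative to $B$; this is the standard two-out-of-three for orthogonality, as for the "inner fibration" remark in the previous proof. It therefore suffices that $E$ and $B$ are both $\Pb{q}^*\mathcal{L}$-local. This follows from exponentiability of $q$: every $\mathcal{L}$-local type is $\Pb{q}^*\mathcal{L}$-local, and $E$, $B$ are $\mathcal{L}$-local by hypothesis. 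Right-orthogonality of a map between two local types to a given map then follows, because the relevant pullback square of mapping types has equivalences on both vertical sides.

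\textbf{Step 3: conclude.} Applying Theorem~\ref{thm:closure-under-generalised-logical-mapping-cylinder} gives that $\Lift_B(p)$ is based Segal complete.

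\textbf{Main obstacle.} The delicate point is matching the shape of Theorem~\ref{thm:closure-under-generalised-logical-mapping-cylinder}, which requires only $\Pb{p}^*\mathcal{L}$-locality of the map $f$ (here $p$), rather than full $\mathcal{L}$-locality of the fibres. One must check carefully that the maps in $\Pb{q}^*\mathcal{L}$ are exactly the ones whose orthogonality is used there. If the direct fibrewise argument stalls, I would fall back on the $\Prop$-level unfolding used in Lemma~5.11 of Pugh and Sterling~\cite{pugh-sterling-2025}. That unfolding identifies pullbacks of $\IsT{i}_\bot\hookrightarrow\II/i$ along $q$ with inclusions of the same shape over a smaller lattice element. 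Locality of $\J$, specifically $\IsT{i\lor j}\Leftrightarrow\IsT{i}\lor\IsT{j}$, is what makes those pullbacks again horns of the based family, possibly summed over a proposition.
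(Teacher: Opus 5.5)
Your proposal is correct and is essentially the paper's proof, which simply repeats that of Theorem~\ref{thm:segal-relative-pmc}. You apply Theorem~\ref{thm:closure-under-generalised-logical-mapping-cylinder} to the generic open embedding $\{1\}\hookrightarrow\II$ with $f\equiv p$, and get right orthogonality of $p$ to $\Pb{\{1\}\hookrightarrow\II}^*\mathcal{L}_{\mathit{Segal}}^{\mathit{based}}$ from the exponentiability lemma applied to both $E$ and $B$. In Step~1 you also check explicitly that $\II$ is based Segal complete (Proposition~\ref{prop:interval-segal}, using that locality implies strictness), a hypothesis the paper's one-line proof leaves implicit; in Step~2, note that the base condition actually needed is $\Pb{\{1\}\hookrightarrow\II}^*\mathcal{L}_{\mathit{Segal}}^{\mathit{based}}$-locality of $B$ rather than mere based Segal completeness, which you do correctly supply via exponentiability.
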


\begin{proof}
Identical to the proof of Theorem~\ref{thm:segal-relative-pmc}.
\end{proof}

\begin{corollary}\label{cor:mapping-cylinders-are-logical}
  Let $\J$ be local, dominant, and satisfy Phoa's principle. Then within the subuniverse of Sierpi\'nski complete types, open mapping cylinders are computed as relative open-partial map classifiers.
\end{corollary}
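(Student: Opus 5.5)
The approach is to work fibrewise over the base $B$ of a map $f\colon E\to B$ between Sierpi\'nski complete types; we may assume $f$ is $\IsT{0}$-connected, so that the comparison map of Definition~\ref{def:mapping-cylinder-comparison} exists. The claim to establish is that the comparison map
\[
  \sigma_f \colon \OpenMCyl_B(f) \to \Lift_B(f)
\]
is a reflection of the open mapping cylinder into the Sierpi\'nski complete subuniverse. Concretely, we need two facts: $\Lift_B(f)$ is Sierpi\'nski complete, and $\sigma_f$ is inverted by every Sierpi\'nski complete type $C$, that is, $C^{\sigma_f}$ is an equivalence.

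The first fact is the main content already supplied by the preceding theorem. Under the hypotheses that $\J$ is local, dominant and satisfies Phoa's principle, based Segal complete types are closed under relative open-partial map classifiers of maps between based Segal complete types. Since $\J$ is local, it is in particular conservative, so based Segal completeness coincides with little-Sierpi\'nski completeness. By Corollary~\ref{cor:sierp=little-sierp}, this in turn coincides with Sierpi\'nski completeness. Hence $\Lift_B(f)$ lies in the subuniverse whenever $E$ and $B$ do.

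For the second fact, I would use the currying equivalence $C^{\sum_{x:B}P(x)} \cong \prod_{x:B} C^{P(x)}$. Under this identification, $C^{\sigma_f}$ becomes the product over $x\bcolon B$ of the restriction maps
\[
  C^{\sigma_{\mathsf{fib}_f(x)}} \colon C^{\Lift(\mathsf{fib}_f(x))} \to C^{\mathsf{fib}_f(x)_\bot}\text{.}
\]
This uses only that $\sigma_f$ was defined as the sum of the fibrewise comparison maps. Each fibre $\mathsf{fib}_f(x)$ is $\IsT{0}$-connected, so each factor is an equivalence by the very definition of Sierpi\'nski completeness, as $\sigma_{\mathsf{fib}_f(x)}\in\mathcal{L}_{\mathit{Sierp}}$. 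A product of equivalences is an equivalence. It follows that $\sigma_f$ is an $\mathcal{L}_{\mathit{Sierp}}$-local equivalence with local codomain, and therefore exhibits $\Lift_B(f)$ as the localisation of $\OpenMCyl_B(f)$.

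Finally, I would note that the same argument applies verbatim to open display cylinders $\OpenMCyl_{(x\bcolon B)}E[x]$ for a family of $\IsT{0}$-connected types over a Sierpi\'nski complete base, provided the total space is complete. If $\J$ is also strict, every type is $\IsT{0}$-connected, so the connectedness hypothesis becomes automatic.

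The only real obstacle is bookkeeping the hypotheses in the first fact. The closure theorem requires $E$ and $B$ to be based Segal complete, and the identification with Sierpi\'nski completeness needs conservativity, which I will derive from locality plus dominance (dominance includes conservativity by definition). Once this is in place, the reflection statement is formal.
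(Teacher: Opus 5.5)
Your argument is correct and matches the one the paper leaves implicit (the corollary is stated without a separate proof): the preceding closure theorem, transported along the chain based Segal $=$ little-Sierpi\'nski $=$ Sierpi\'nski completeness, puts $\Lift_B(f)$ in the subuniverse, and $\sigma_f$, being a $\Sigma$ of maps in $\mathcal{L}_{\mathit{Sierp}}$, is inverted by every Sierpi\'nski complete type, so it is the reflection. Two small slips: locality does not imply conservativity (that comes from dominance, as you yourself note at the end), whereas locality \emph{does} already include strictness (preservation of the empty join), so every $f$ is automatically $\IsT{0}$-connected and neither your ``we may assume'' nor your ``if $\J$ is also strict'' caveat is needed.
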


The thrust of Corollary~\ref{cor:mapping-cylinders-are-logical} can be viewed in two ways. First, it is saying that within the subuniverse of Sierpi\'nski complete types, the following is a pushout diagram for any $f\colon X\to Y$ with $X$ and $Y$ both Sierpi\'nski complete:
\[
  \begin{tikzcd}
    X
      \ar[r, closed embedding, "{(0,X)}"]
      \ar[d, "f"']
    & \II\times X
      \ar[d, "\mathsf{glue}_f"]
    \\
    Y
      \ar[r, closed embedding, "\mathsf{und}_f"']
    & |[elbow se]| \Lift_{Y}(f)
  \end{tikzcd}
\]

On the other hand, we may also consider it as saying that the open mapping cylinder of $f\colon X\to Y$ in $\Type_{\mathit{Sierp}}\subseteq\Type$ has an unexpected right-handed universal property:
\begin{quote}
  Within $\Type_{\mathit{Sierp}}$, the open mapping cylinder of $f\colon X\to Y$ classifies points $y\bcolon Y$ equipped with open partial elements of $\mathsf{fib}_f(y)$.
\end{quote}

Of course, using Lemma~\ref{lem:0-exponentiability} we may derive a version of the above for \emph{closed} partial map classifiers and and \emph{closed} mapping cylinders under suitably dual hypotheses.

\subsection{Exponentiability of global points}\label{sec:exponentiability-of-global-points}

It is well-known classically that every functor from an $\infty$-category $E$ into $\II$ is exponentiable~\cite[Corollary~2.2.10]{Ayala-Francis:2020}. However, if all point inclusions of $\II$ are $\mathcal{L}_{\mathit{Segal}}$-exponentiable, then Segal completeness implies based Segal completeness, so by Corollary~\ref{cor:conjecture-implies-quotient-initial} the interval would be quotient-initial. How do we square these two facts?

The difference is that constructions made internally to type theory are context-invariant, giving them functorial properties, and this classically-known fact is not sufficiently functorial for the na\"ive interpretation to hold true.

We can express statements of a ``non-functorial'' nature using a modal extension of type theory following Gratzer~\cite{Gratzer_2021}. Concretely, we assume a single mode and a modality $\flat$ equipped with a counit $\varepsilon \colon \flat \to \mathsf{id}$. While this setup is very general, we interpret $\flat$ as ``global points'' or ``underlying core of objects'', since that is the application we have in mind.

\begin{definition}
  We say that $\J$ has \emph{quotient-initial core} if the canonical comparison map $(0,1)\colon \Two \to \Flat\II$ is surjective.
\end{definition}

\begin{proposition}\label{prop:global-point-incl-equivs}
  Let $\J$ be strict with quotient-initial core. Then for every global point inclusion $\{i\} \hookrightarrow \II$ the inclusion $\iota_i\colon \IsT{i}_\bot \hookrightarrow \II/i$ is an equivalence.
\end{proposition}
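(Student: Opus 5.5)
Since $i$ is a global point, $(0,1)\colon\Two\to\Flat\II$ is surjective and we are proving a proposition (being an equivalence is a proposition), so we may assume $i\equiv\varepsilon(c)$ for some $c\in\Flat\II$ and, by surjectivity, that $c=0$ or $c=1$. This is exactly where the modality is needed: the argument must not descend to a generic $i\bcolon\II$, for otherwise Corollary~\ref{cor:conjecture-implies-quotient-initial} would make $\J$ quotient-initial. Transporting along $\varepsilon(0)=0$ or $\varepsilon(1)=1$, it suffices to show that $\iota_0\colon\IsT{0}_\bot\hookrightarrow\II/0$ and $\iota_1\colon\IsT{1}_\bot\hookrightarrow\II/1$ are equivalences.

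\textbf{Case $i=1$.} Here $\IsT{1}$ is contractible, so
\[
  \IsT{1}_\bot\equiv\textstyle\sum_{(j\bcolon\II)}\IsF{j}*\IsT{1}\simeq\sum_{(j\bcolon\II)}\One\simeq\II,
\]
because the join $P*\One$ is contractible for any proposition $P$. Also $\II/1=\{j\mid 1\geq j\}=\II$. Using the description $\Horn\cong\sum_i\IsT{i}_\bot$ from Observation~\ref{obs:inner-horn-as-sum-of-scones}, the inclusion sends $j$ to $j$, i.e.\ it is the subtype inclusion $\{j\bcolon\II\mid\IsF{j}\lor\IsT{1}\}\hookrightarrow\II$. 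Since $\IsT{1}$ holds, this subtype is all of $\II$, and $\iota_1$ is an equivalence.

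\textbf{Case $i=0$.} Strictness says $\IsT{0}$ is empty, i.e.\ $0\neq1$. Then $\IsF{j}*\IsT{0}\simeq\IsF{j}$, because joining with the empty type does nothing. Hence $\IsT{0}_\bot\simeq\sum_j\IsF{j}\simeq\One$, via the subtype $\{j\mid\IsF{j}\lor\bot\}=\{0\}$. Likewise $\II/0=\{j\mid 0\geq j\}=\{j\mid j=0\}\simeq\One$. The map $\iota_0$ is the inclusion of subtypes $\{j\mid\IsF{j}\lor\IsT{0}\}\subseteq\{j\mid 0\geq j\}$, and both sides are the singleton $\{0\}$, so it is an equivalence.

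\textbf{Main obstacle.} The computations in the two cases are routine. The delicate step is the first one: justifying the reduction from an arbitrary global point $\varepsilon(c)$ to the cases $c=0$ and $c=1$. This needs two checks.
\begin{enumerate}
  \item The goal is a proposition. It is, since being an equivalence is a proposition, so we may eliminate the truncated surjectivity witness.
  \item The family $c\mapsto\mathsf{isEquiv}(\iota_{\varepsilon(c)})$ is a family over $\Flat\II$ to which we may apply the surjectivity of $(0,1)\colon\Two\to\Flat\II$. This is pulling back along the counit in the modal type theory of Gratzer. I would check that no crisp-variable restriction blocks it; it should not, because we only case on the surjection and then transport along the equations $\varepsilon(0)=0$ and $\varepsilon(1)=1$.

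\end{enumerate}
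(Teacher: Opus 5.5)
Your proposal is correct and follows the paper's own proof essentially verbatim. Both arguments use the facts that $\mathsf{isEquiv}$ is a proposition and that $(0,1)\colon\Two\to\Flat\II$ is surjective to reduce to the cases $i=0$ and $i=1$; for $i=0$ both $\IsT{0}_\bot$ and $\II/0$ are contractible (using strictness), and for $i=1$ the inclusion is the identity on $\II$. Your explicit subtype computations just fill in details the paper leaves implicit.
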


\begin{proof}
  We need to show $\textstyle\prod_{(i \bcolon \Flat{\II})}\mathsf{isEquiv}(\iota_{\varepsilon(i)})$. Since the core of $\J$ is quotient-initial and $\mathsf{isEquiv}$ is a proposition, it suffices to consider the two cases $i = 0$ and $i = 1$. When $i = 0$, both $(i = 1)_\bot$ and $\II/i$ are contractible (using strictness), and when $i = 1$ the inclusion is the identity function on $\II$.
\end{proof}

\begin{proposition}
  Assume $\J$ is disjunctive and conjunctive, satisfies Phoa's principle, and has quotient-initial core. Then every global point inclusion $\{i\} \hookrightarrow \II$ is $\mathcal{L}_{\mathit{Segal}}$-exponentiable.
\end{proposition}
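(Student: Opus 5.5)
The proof mirrors that of Proposition~\ref{prop:global-point-incl-equivs}. We must show
\[
  \textstyle\prod_{(i\bcolon\Flat\II)}\text{``$\{\varepsilon(i)\}\hookrightarrow\II$ is $\mathcal{L}_{\mathit{Segal}}$-exponentiable''}.
\]
Exponentiability is the statement that every Segal complete type is local with respect to all pullbacks of the inner horn inclusion along maps into $\{\varepsilon(i)\}\times_{\II}-$. This is a proposition, since being local is a proposition and universal quantification over types preserves this. Because the core of $\J$ is quotient-initial, the comparison $(0,1)\colon\Two\to\Flat\II$ is surjective. Eliminating this surjection into a proposition therefore reduces the goal to the two cases $i=1$ and $i=0$.

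In the case $i=1$ we invoke Lemma~\ref{lem:1-exponentiability}, which applies because $\J$ is disjunctive and satisfies Phoa's principle. In the case $i=0$ we invoke Lemma~\ref{lem:0-exponentiability}, which applies because $\J$ is conjunctive and satisfies Phoa's principle. In each case we must check that $\varepsilon$ of the chosen global point agrees with the internal constant $0$ or $1\bcolon\II$. This holds because $\varepsilon\colon\Flat\II\to\II$ commutes with the canonical maps out of $\Two$: the points $0,1\bcolon\II$ are closed terms, so their images under $\Flat$ followed by $\varepsilon$ are $0$ and $1$. Transporting the exponentiability statement along this identification finishes each case.

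The only potential obstacle is the bookkeeping in the modal type theory. We must confirm that surjectivity of $\Two\to\Flat\II$ may be used under a dependent product over $\Flat\II$, rather than only over crisp variables. We must also confirm that the exponentiability predicate is indeed a proposition in the context $i\bcolon\Flat\II$. The first point follows from the ordinary elimination principle of propositional truncation applied to the fibre of $(0,1)$ over $i$, which requires no crispness. The second point follows from the fact that isEquiv is a proposition, combined with function extensionality over the universe-indexed quantification. No further argument is needed beyond the two earlier lemmas. This explains why the hypotheses combine disjunctivity and conjunctivity: each handles one endpoint.
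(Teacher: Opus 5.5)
Your proposal is correct and is exactly the paper's argument. Since $\mathcal{L}_{\mathit{Segal}}$-exponentiability is a proposition, surjectivity of $\Two\to\Flat\II$ reduces the claim to the endpoints $0$ and $1$, which Lemmas~\ref{lem:0-exponentiability} and~\ref{lem:1-exponentiability} handle, just as in the outline of Proposition~\ref{prop:global-point-incl-equivs}. One small slip: $\Pb{f}^*\mathcal{L}_{\mathit{Segal}}$ consists of pullbacks of $\Horn\hookrightarrow\Spx{\Two}$ along pullbacks of $\{\varepsilon(i)\}\hookrightarrow\II$, not along ``maps into $\{\varepsilon(i)\}\times_{\II}-$'', but your argument only uses that the resulting predicate is a proposition.
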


\begin{proof}
  Follows the same proof outline as Proposition~\ref{prop:global-point-incl-equivs}, using Lemmas~\ref{lem:0-exponentiability} and~\ref{lem:1-exponentiability}.
\end{proof}

\begin{corollary}
  In the classifying $\infty$-topos of strict linear intervals (\emph{i.e.}\ simplicial spaces), every global point inclusion $\{i\} \hookrightarrow \II$ is $\mathcal{L}_{\mathit{Segal}}$-exponentiable.
\end{corollary}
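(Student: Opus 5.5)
The corollary is the external instance of the preceding proposition: I would verify its three hypotheses (disjunctive, conjunctive, Phoa's principle) together with quotient-initial core for the generic strict linear interval in simplicial spaces. The argument is carried out in the modal extension, where $\Flat$ is read as the global-sections (discrete/constant) coreflection of the $\infty$-topos and $\varepsilon\colon\Flat\to\mathsf{id}$ is its counit.

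First, the lattice hypotheses. By the earlier remark, a strict linear order satisfies every condition of Definition~\ref{def:lattice-conditions}; in particular it is local, hence disjunctive, and dually conjunctive. Phoa's principle for the generic strict linear interval is the known synthetic quasi-coherence result: the interval in simplicial spaces is $\Spx{\One}$, and $\J[\mathsf{x}]$ is quasi-coherent because $\Spec{\J}$ of finitely presented algebras recovers the representable simplices. I would cite Sterling and Ye for this.

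Second, quotient-initial core. Here $\Flat\II$ is the constant simplicial space on the set of global points of $\Spx{\One}$, namely the vertices of $\Spx{\One}$. Those are exactly the two maps $\Spx{\Zero}\to\Spx{\One}$, corresponding to $0$ and $1$. So $(0,1)\colon\Two\to\Flat\II$ is an equivalence, and in particular surjective. Interpreting the internal surjectivity statement correctly is the step I expect to be most delicate. Internally, surjectivity of a map between $\Flat$-discrete objects must hold as a statement about global elements, not only in the empty context. The intended justification is that $\Flat$ of a representable is the discrete object on its global points, so the statement is indeed external.

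With these in hand, the preceding proposition applies verbatim. It yields that every global point inclusion $\{i\}\hookrightarrow\II$, meaning one with $i$ in the image of $\varepsilon\colon\Flat\II\to\II$, is $\mathcal{L}_{\mathit{Segal}}$-exponentiable. This matches the classical Ayala–Francis fact mentioned above.
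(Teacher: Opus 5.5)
Your proposal is correct and matches the paper's route: the paper gives this corollary no separate proof, so it is the instance of the preceding proposition obtained by checking disjunctivity and conjunctivity (via the remark on strict linear orders), Phoa's principle, and quotient-initial core. The last of these holds because $\Spx{\One}$ has exactly the two vertices $0,1$, so $(0,1)\colon\Two\to\Flat\II$ is an isomorphism in the topos and hence internally surjective, with no further delicacy. One small correction: Phoa's principle is the quasi-coherence statement $\II^{\II}\cong\J[\mathsf{x}]$, and in simplicial spaces it comes from the algebra side, namely that maps $\Spx{n}\times\Spx{\One}\to\Spx{\One}$ are monotone maps $[n]\times[1]\to[1]$, each of the form $a\lor(b\land x)$ with $a\leq b$; it does not come from $\Spec{\J}$ of finitely presented algebras recovering the simplices.
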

 
\section*{Acknowledgements}
This work was funded in part by the United States Air Force Office of Scientific Research under grant FA9550-23-1-0728 (\emph{New Spaces for Denotational Semantics}; Dr.\ Tristan Nguyen, Program Manager). Views and opinions expressed are however those of the authors only and do not necessarily reflect those of AFOSR.

\bibliographystyle{plain}
\bibliography{refs}

\end{document}